\newtheorem{theorem}{Theorem}[section]
\newtheorem{proposition}[theorem]{Proposition}
\newtheorem{remark}[theorem]{Remark}
\newtheorem{lemma}[theorem]{Lemma}
\newtheorem{corollary}[theorem]{Corollary} 
\newtheorem{definition}[theorem]{Definition}
\numberwithin{equation}{section}
\newcommand{\R}{\mathbb R}
\newcommand{\C}{\mathbb C} 
\newcommand{\N}{\mathbb N}
\newcommand{\be}{\begin{equation}}
\newcommand{\ee}{\end{equation}}
\newcommand{\ba}{\begin{eqnarray}}
\newcommand{\ea}{\end{eqnarray}}
\newcommand{\beq}{\begin{equation}}
\newcommand{\eeq}{\end{equation}}
\numberwithin{equation}{section}
\def\Omc{\mathbb{R}^N\setminus\Omega}
\def\Omb{\mathbb{R}^N\setminus\overline{\Omega}}
\def\RR{{\mathbb{R}}}
\def\NN{{\mathbb{N}}}
\def\N{{\mathbb{N}}}
\def\CC{{\mathbb{C}}}
\def\C{{\mathbb{C}}}
\def\Om{\Omega}
\def\bOm{\overline{\Om}}
\def\pOm{\partial \Omega}
\keywords{Fractional Laplace operator, wave equation, strong damping, exterior control, exact and null controllabilities, approximate controllability.}
\subjclass[2010]{35R11, 35S05, 35S11, 35L20, 93B05}
\begin{document}

\title[Strong damping wave equations]{Analysis of the controllability from the exterior of strong damping nonlocal wave equations}\thanks{The work of the first author is partially supported by the Air Force Office of Scientific Research (AFOSR) under the Award NO:  FA9550-18-1-0242. The second author  is supported by the Fondecyt Postdoctoral Grant No. 3180322}

\author{Mahamadi Warma}
\address{M. Warma, University of Puerto Rico, Rio Piedras Campus, Department of Mathematics,
 College of Natural Sciences,  17 University AVE. STE 1701  San Juan PR 00925-2537 (USA)}
\email{mahamadi.warma1@upr.edu, mjwarma@gmail.com }

\author{Sebasti\'an Zamorano}
\address{S. Zamorano, Universidad de Santiago de Chile, Departamento de
Matem\'atica, Facultad de Ciencias, Casilla 307-Correo 2,
Santiago, Chile.}
 \email{sebastian.zamorano@usach.cl}



\begin{abstract}
We make a complete analysis of the controllability properties from the exterior of the (possible) strong damping wave equation with the fractional Laplace operator subject to the nonhomogeneous Dirichlet type exterior condition. In the first part, we show that if $0<s<1$, $\Omega\subset\RR^N$ ($N\ge 1$) is a bounded Lipschitz domain and the parameter $\delta> 0$, then there is no control function $g$ such that the  following system 
\begin{equation*}
\begin{cases}
u_{tt} + (-\Delta)^{s} u + \delta(-\Delta)^{s} u_{t}=0 & \mbox{ in }\; \Omega\times(0,T),\\
u=g\chi_{\mathcal O\times (0,T)} &\mbox{ in }\; (\Omc)\times (0,T) ,\\
u(\cdot,0) = u_0, u_t(\cdot,0) = u_1  &\mbox{ in }\; \Omega,
\end{cases}
\end{equation*}
is exact or null controllable at time $T>0$. In the second part, we prove that for every $\delta\ge 0$ and $0<s<1$, the system is indeed
approximately controllable for any $T>0$ and $g\in \mathcal D(\mathcal O\times(0,T))$, where $\mathcal O\subset\Omc$ is any non-empty open set.  
\end{abstract}

\maketitle

\section{Introduction}

Let $\Omega\subset\R^N$ ($N\ge 1$) be a bounded open set with a Lipschitz continuous boundary $\pOm$. The aim of the present paper is to study completely the controllability properties from the exterior of the (possible) strong damping nonlocal wave equation associated with the fractional Laplace operator. More precisely, we consider  the system
\begin{align}\label{SD-WE}
\begin{cases}
u_{tt} + (-\Delta)^{s} u +  \delta(-\Delta)^{s} u_{t}=0 & \mbox{ in }\; \Omega\times(0,T),\\
u=g\chi_{\mathcal O\times (0,T)} &\mbox{ in }\; (\Omc)\times (0,T), \\
u(\cdot,0) = u_0, \;\;u_t(\cdot,0) = u_1  &\mbox{ in }\; \Omega,
\end{cases}
\end{align}
where $u=u(x,t)$ is the state to be controlled, $g=g(x,t)$ is the control function which is localized on a subset $\mathcal O$ of $\Omc$, $\delta\ge 0$ and  $0<s<1$ are real numbers, and $(-\Delta)^s$ denotes the fractional Laplace operator (see \eqref{fl_def}).

We shall show that  for every $g\in L^2((0,T);W^{s,2}(\Omc))$, if $u_0$ and $u_1$ are in a suitable Banach space, then  the system \eqref{SD-WE} has a unique solution $(u,u_t)$ satisfying the regularity $u\in C([0,T];W^{s,2}(\RR^N))\cap C^1([0,T];L^2(\Om))$.
In that case,  the set of reachable states is given by
\begin{align*}
\mathcal R((u_0,u_1),T)=\Big\{(u(\cdot,T),u_t(\cdot,T)):\; g\in L^2((0,T); W^{s,2}(\Omc))\Big\}.
\end{align*}

Let $W^{-s,2}(\bOm)$ be the dual of the energy space $W_0^{s,2}(\bOm)$ (see Section \ref{preli}).

We shall consider the following three notions of controllability.

\begin{itemize}
 \item The system is said to be null controllable at $T>0$,  if 
\begin{align*}
(0,0)\in \mathcal R((u_0,u_1),T).
\end{align*}
In other words, there is a control function $g$ such that the unique solution $(u,u_t)$ satisfies $u(\cdot,T)=u_t(\cdot,T)=0$ almost everywhere in $\Omega$.

\item The system is said to be exact controllable at $T>0$, if
\begin{align*}
\mathcal R((u_0,u_1),T)=L^2(\Omega)\times W^{-s,2}(\bOm).
\end{align*}

\item The system is said to be approximately controllable at $T>0$,  if
\begin{align*}
\mathcal R((u_0,u_1),T)\;\mbox{ is dense in }\; L^2(\Omega)\times W^{-s,2}(\bOm),
\end{align*}
or equivalently, for every  $(\tilde u_0,\tilde u_1)\in L^2(\Omega)\times W^{-s,2}(\bOm)$ and $\varepsilon>0$, there is a control $g$ such that the corresponding unique solution $(u,u_t)$ of \eqref{SD-WE} with $u_0=u_1=0$ satisfies
\begin{align}\label{cond-control}
\left\|u(\cdot,T)-\tilde u_0\right\|_{L^2(\Omega)}+ \left\|u_t(\cdot,T)-\tilde u_1\right\|_{W^{-s,2}(\bOm)}\le \varepsilon.
\end{align} 
\end{itemize}
In the present article we have obtained the following specific results.

\begin{itemize}
\item[(i)] Our first main result says that if $\delta>0$, then the system is not exact or null controllable at any time $T>0$. 

\item[(ii)] We also obtain that the adjoint system associated with \eqref{SD-WE} satisfies the unique continuous property for evolution equations.

\item [(iii)] The third main result states that the system \eqref{SD-WE} is  approximately controllable, for every $\delta\ge 0$, $0<s<1$, $T>0$, and for every $g\in\mathcal D(\mathcal O\times (0,T))$, where  $\mathcal O\subset\Omc$ is any non-empty open set. Since the system is not exact or null controllable (if $\delta>0$), it is the best possible result that can be obtained regarding the controllability of such systems.
\end{itemize}

The null/exact controllabilty from the interior of the pure (without damping) wave equation (with {\em strong} zero Dirichlet exterior condition) associated with the bi-fractional Laplace operator has been investigated in \cite{Umb} by using a Pohozaev identity for the fractional Laplacian established in \cite{RS-Po}.  More precisely, the author in \cite{Umb} has considered the following problem:
\begin{align}\label{SD-WE-B}
\begin{cases}
u_{tt} + (-\Delta)^{2s} u=f \chi_{\omega\times(0,T)}& \mbox{ in }\; \Omega\times(0,T),\\
u=(-\Delta)^su=0 &\mbox{ in }\; (\Omc)\times (0,T), \\
u(\cdot,0) = u_0, \;\;u_t(\cdot,0) = u_1  &\mbox{ in }\; \Omega,
\end{cases}
\end{align}
where $0<s<1$, $u$ is the state to be controlled and $f$ is the control function localized in a certain neighborhood $\omega$ of the boundary $\pOm$.
He has shown that the system \eqref{SD-WE-B} is exact/null controllable at any time $T>0$ if $\frac 12<s<1$ and at any time $T>T_0$ if $s=\frac 12$, where $T_0$ is a certain positive constant.
We notice that, since the system \eqref{SD-WE-B} is reversible in time (which is not the case for the system \eqref{SD-WE} if $\delta>0$), then in this case, the null and exact controllabilties are the same notions.

Always in the case $\delta=0$, most recently, we have  studied in \cite{CLR-MW} the controllability of the space-time fractional wave equation, that is, the case where in  \eqref{SD-WE}, we have replaced $u_{tt}$ by the Caputo time fractional derivative $\mathbb D_t^\alpha$ $(1<\alpha<2$). 
We have obtained a positive result about the approximate controllability from the exterior. The corresponding problem for interior control has been studied in \cite{KW}.
The case of the fractional diffusion equation from the exterior, that is, when $0<\alpha\le 1$, has been completely investigated in  \cite{War-ACE}. We mention that due to the results in \cite{EZ}, fractional in time evolution equations can never be null/exact controllable.
The  null controllabilty from the interior of the heat equation associated with the fractional Laplace operator (with zero Dirichlet exterior condition) has been recently studied in one space-dimension in \cite{BiHe} by using the asymptotic gap of the eigenvalues of the realization in $L^2(\Om)$ of $(-\Delta)^s$ with the zero Dirichlet exterior condition. The case $N\ge 2$ is still an open problem. 

In the present paper, using some ideas that we have recently developed in \cite{CLR-MW,War-ACE}, we shall study the controllability of the nonlocal wave or/and the strong damping nonlocal wave equations with the control function localized at the exterior of the domain $\Omega$ where the evolution equation is solved.
 To the best of our knowledge, it is the third work (after our work \cite{War-ACE} for the case $0<\alpha\le 1$ and \cite{CLR-MW} for the case $1<\alpha<2$) that addresses the controllability of nonlocal equations from the exterior of the domain involved, and it is the first work that studies the controllability from the exterior of wave and/or strong damping nonlocal wave equations involving the fractional Laplace operator.  
 
 We also notice that from our results, when taking the limit as $s\uparrow 1^-$, we can recover the known results on the topics regarding the controllability from the boundary of the local wave or the strong damping local wave equations studied in \cite{rosier2007,Zua1} and their references. That is, the case where the control function is localized on a subset $\omega$ of  $\pOm$.

Anomalous diffusion and wave equations are of great interest in physics. In \cite{Mai} it has been shown that the fractional wave equation governs the propagation of mechanical diffusion waves in viscoelastic media. Fractional order operators have also recently emerged as a modeling alternative in various branches of science. 
A number of stochastic models for explaining anomalous diffusion have been
introduced in the literature; among them we  quote the fractional Brownian motion; the continuous time random walk;  the L\'evy flights; the Schneider grey Brownian motion; and more generally, random walk models based on evolution equations of single and distributed fractional order in  space (see e.g. \cite{DS,GR,Man,Sch,ZL}).  In general, a fractional diffusion operator corresponds to a diverging jump length variance in the random walk. We refer to \cite{NPV,Val} and the references therein for a complete analysis, the derivation and the applications of the fractional Laplace operator.
For further details we also refer to \cite{GW-F,GW-CPDE} and their references.

The rest of the paper is structured as follows. In Section \ref{sec-2} we state the main results of the article. The first one (Theorem \ref{lact-nul-cont}) says that if $\delta>0$, then the system \eqref{SD-WE} is not exact/null controllable at time $T>0$. The second main result (Theorem \ref{pro-uni-con}) shows that the adjoint system associated with \eqref{SD-WE} satisfies the unique continuation property for evolution equations and the third main result (Theorem \ref{main-Theo}) states that for every $\delta\ge 0$, the system \eqref{SD-WE} is approximately controllable at any time $T>0$. In Section \ref{preli} we introduce the function spaces needed to study our problem and we give some known results that are used in the proof of our main results. This is followed  in Section \ref{sec-4} by the proof of the existence, uniqueness, regularity and the representation of solutions of \eqref{SD-WE} and its associated dual system in terms of series. Finally in Section \ref{prof-ma-re}, we give the proof of the main results stated in Section \ref{sec-2}.

\section{Main results}\label{sec-2}

In this section we state the main results of the article. Throughout the remainder of the paper, without any mention, $\delta\ge 0$ and $0<s<1$ are real numbers and $\Omega\subset\R^N$ denotes a bounded open set with a Lipschitz continuous boundary. Given a measurable set $E\subset\RR^N$, we shall denote by $(\cdot,\cdot)_{L^2(E)}$ the scalar product in $L^2(E)$.  We refer to Section \ref{preli} for a rigorous definition of the function spaces and operators involved. Let $W_0^{s,2}(\bOm)$ be the energy space and denote by $W^{-s,2}(\bOm)$ its dual. We shall let  $\langle\cdot,\cdot\rangle_{-\frac 12,\frac 12}$ be their duality pair.

Our first main result is the following theorem.

\begin{theorem}\label{lact-nul-cont}
Let $\delta>0$. Then  the system \eqref{SD-WE} is not exact or null controllable at time $T>0$.
\end{theorem}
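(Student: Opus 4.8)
The plan is to argue by contradiction, through the Hilbert Uniqueness Method together with the spectral analysis of the generator of \eqref{SD-WE}. Let $(\varphi_n,\lambda_n)_{n\ge1}$ be the eigenpairs of $(-\Delta)^s$ in $\Omega$ with the zero exterior Dirichlet condition, $0<\lambda_1\le\lambda_2\le\cdots\to\infty$, $\|\varphi_n\|_{L^2(\Omega)}=1$, and let $\mathcal E$ be the bilinear form of $(-\Delta)^s$ on $\RR^N$, so that $\mathcal E(\varphi_n,\varphi_n)=\lambda_n$. Projecting \eqref{SD-WE} onto $\varphi_n$ shows that the $n$-th modal component solves a scalar second order ODE with characteristic roots $\mu_n^{\pm}=\tfrac12\bigl(-\delta\lambda_n\pm\sqrt{\delta^2\lambda_n^2-4\lambda_n}\bigr)$; since $\mu_n^+\mu_n^-=\lambda_n$ and $\mu_n^++\mu_n^-=-\delta\lambda_n$, for large $n$ both roots are real and negative with $\mu_n^-\sim-\delta\lambda_n\to-\infty$ and, crucially, $\mu_n^+\to-\tfrac1\delta$. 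Hence the generator possesses a branch of eigenvalues accumulating at the \emph{finite} point $-1/\delta$; this is the structural feature that destroys exact/null controllability, exactly as in the local case of \cite{rosier2007}.

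Assume, for contradiction, that \eqref{SD-WE} is null controllable at some $T>0$ (the exact controllable case being entirely analogous, with evaluation at $t=T$ in place of $t=0$). By the open mapping theorem the controllability cost is then uniformly bounded, and by the usual transposition/duality argument this is equivalent to an observability inequality for the adjoint problem
\begin{align*}
\begin{cases}
v_{tt}+(-\Delta)^sv-\delta(-\Delta)^sv_t=0 & \text{in }\Omega\times(0,T),\\
v=0 & \text{in }(\Omc)\times(0,T),
\end{cases}
\end{align*}
of the form $\|(v(0),v_t(0))\|_{Y}^2\le C\int_0^T\bigl\|\mathcal N_s(v-\delta v_t)\big|_{\mathcal O}\bigr\|^2\,dt$, where $\mathcal N_s$ is the nonlocal normal derivative, $Y$ is the space dual to the state space $L^2(\Omega)\times W^{-s,2}(\bOm)$ of \eqref{SD-WE} (so $Y$ carries a $W_0^{s,2}(\bOm)$-type norm on one component), and the summand $\delta v_t$ arises precisely from the strong damping via the fractional integration-by-parts (Green) formula. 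Making this duality rigorous — well-posedness of the transposition solutions of \eqref{SD-WE} for $g\in L^2((0,T);W^{s,2}(\Omc))$, the precise adjoint observation operator, and the equivalence ``uniform controllability $\iff$ observability'' in the correct low-regularity spaces — is the technical heart of the proof.

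I would then test this inequality on the separated solutions $v_n(t):=\varphi_n\,e^{r_n^-(t-T)}$, where $r_n^-=-\mu_n^+\to1/\delta$ is the bounded root of the adjoint modal equation. On the left, since $r_n^-$ stays in a fixed compact subinterval of $(0,\infty)$ and $\mathcal E(\varphi_n,\varphi_n)=\lambda_n$, one obtains $\|(v_n(0),(v_n)_t(0))\|_Y^2\gtrsim\lambda_n$. On the right, $\mathcal N_s(v_n-\delta(v_n)_t)\big|_{\mathcal O}=(1-\delta r_n^-)\,e^{r_n^-(t-T)}\,\mathcal N_s\varphi_n\big|_{\mathcal O}$; the time integral $\int_0^Te^{2r_n^-(t-T)}\,dt$ is bounded, the Green formula yields $\|\mathcal N_s\varphi_n\|\lesssim\mathcal E(\varphi_n,\varphi_n)^{1/2}=\lambda_n^{1/2}$ in the relevant observation norm (indeed, for $h$ supported in $\overline{\mathcal O}\subset\Omc$ one has $\int_{\mathcal O}h\,\mathcal N_s\varphi_n=\mathcal E(\varphi_n,h)$), and — the decisive point — expanding the roots gives the cancellation $1-\delta r_n^-=1+\delta\mu_n^+=O(\lambda_n^{-1})$. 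Therefore the right-hand side is $O(\lambda_n^{-1})$, so the observability inequality forces $\lambda_n\lesssim C\lambda_n^{-1}$, which is absurd for $n$ large. This rules out null controllability, and the same test functions rule out exact controllability, at every $T>0$.

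The main obstacle is the rigorous duality step just described (the nonlocal Green formula, the transposition framework for \eqref{SD-WE}, and the identification of the spaces $Y=(L^2(\Omega)\times W^{-s,2}(\bOm))^\ast$ and of the observation operator); once it is in place the remaining ingredients — the asymptotics $\mu_n^+\to-1/\delta$ and $1-\delta r_n^-=O(\lambda_n^{-1})$, and the bound on $\mathcal N_s\varphi_n$ — are elementary. A more computational route, bypassing the abstract duality, would instead use the explicit series representation of solutions of \eqref{SD-WE} from Section~\ref{sec-4} together with the same modal estimates.
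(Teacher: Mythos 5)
Your proposal is sound in its quantitative core, but it follows a genuinely different route from the paper. The paper never disproves an observability inequality: it introduces \emph{spectral controllability} (Definition \ref{esp-con}), uses the pointwise duality identity \eqref{32}/Lemma \ref{L1} with the explicit modal adjoint solutions $\psi=e^{\lambda_n^{\pm}(T-t)}\varphi_n$, and concludes by a Paley--Wiener/accumulation-of-zeros argument: the entire function \eqref{43} vanishes at $i\lambda_n^{\pm}$ for all $n>M$, and since $\lambda_n^{+}$ accumulates at a finite point this forces it to vanish identically, whence all Fourier coefficients of $(u_0,u_1)$ vanish; so not even one nontrivial finite combination of eigenfunctions can be steered to zero. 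You instead assume null controllability, upgrade it (closed graph/uniform cost) to a uniform observability inequality for \eqref{ACP-Dual} with observation $\mathcal N_s(\psi-\delta\psi_t)$ on $\mathcal O$, and contradict it with the modes $\varphi_n e^{r_n^-(t-T)}$, $r_n^-=-\mu_n^+$ (your $\mu_n^{\pm}$ are the paper's $\lambda_n^{\pm}$ in \eqref{lam}), using the cancellation $1-\delta r_n^-=1+\delta\mu_n^+=O(\lambda_n^{-1})$ and $\|\mathcal N_s\varphi_n\|_{L^2(\Omc)}\lesssim\lambda_n^{1/2}$ (the latter follows most simply from the boundedness of $\mathcal N_s:W^{s,2}(\RR^N)\to L^2(\Omc)$ together with $\|\varphi_n\|_{W^{s,2}(\RR^N)}^2\simeq 1+\lambda_n$). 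These asymptotics are correct --- in particular your limit $\mu_n^+\to-1/\delta$ is the right value (Remark \ref{rem-41} states $-\delta$, but only the finiteness of the accumulation point matters) --- and the contradiction is robust to the precise left-hand norm, since any reasonable dual norm controls $\|\psi(\cdot,0)\|_{L^2(\Omega)}$, which stays bounded below along your sequence while your right-hand side is $O(\lambda_n^{-1})$. What each route buys: yours is a quantitative disproof of uniform-cost controllability that bypasses complex analysis; the paper's yields the stronger non-steerability of every nontrivial finite eigenfunction combination and needs only the identity \eqref{32} for the specific adjoint solutions used.

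The one genuine gap is the step you yourself flag. The equivalence ``null controllability $\Longleftrightarrow$ uniform observability of \eqref{ACP-Dual}'' requires a bounded transposition solution map for controls $g\in L^2((0,T);W^{s,2}(\Omc))$ (admissibility of the exterior control operator), a closed-graph/uniform-cost argument, and the identification of the adjoint observation as $\mathcal N_s(\psi-\delta\psi_t)$, which comes from integrating the term $\delta g_t\,\mathcal N_s\psi$ in \eqref{32} by parts in time and handling the boundary terms at $t=0,T$. None of this is available off the shelf in the paper: Theorem \ref{theo-44} gives well-posedness only for smooth $g$, with estimates involving $\partial_t^{m+2}g$. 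Until that framework is built, your computation refutes the observability inequality rather than null/exact controllability itself; the paper's spectral-controllability/moment-problem route is designed precisely to avoid this functional-analytic overhead, at the price of the entire-function argument.
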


Next, we introduce our notion of solutions.
Let  $(u_0,u_1)\in L^2(\Omega)\times W^{-s,2}(\bOm)$ and consider the following two systems:
\begin{equation}\label{main-EQ-2}
\begin{cases}
v_{tt}+(-\Delta)^sv+\delta(-\Delta)^sv_t=0\;\;&\mbox{ in }\; \Omega\times (0,T),\\
v=g&\mbox{ in }\;(\Omc)\times (0,T),\\
v(\cdot,0)=0,\;\;v_t(\cdot,0)=0&\mbox{ in }\;\Omega,
\end{cases}
\end{equation}
and
\begin{equation}\label{main-EQ-3}
\begin{cases}
w_{tt}+(-\Delta)^sw+\delta(-\Delta)^sw_t=0\;\;&\mbox{ in }\; \Omega\times (0,T),\\
w=0&\mbox{ in }\;(\Omc)\times (0,T),\\
w(\cdot,0)=u_0,\;\;w_t(\cdot,0)=u_1&\mbox{ in }\;\Omega.
\end{cases}
\end{equation}
Then it is clear that $u=v+w$ solves the system \eqref{SD-WE}.

\begin{definition}\label{def-strong-sol}
Let  $g$ be a given function. 
A function $(v,v_t)$ is said to be a weak solution of \eqref{main-EQ-2}, if the following properties hold.

\begin{itemize}
\item Regularity:
\begin{equation}\label{regu}
\begin{cases}
v\in C([0,T];L^2(\Omega))\cap C^1([0,T];W^{-s,2}(\bOm)), \\
v_{tt}\in C((0,T); W^{-s,2}(\bOm)).
\end{cases}
\end{equation}
\item Variational identity: For every $w\in W_0^{s,2}(\bOm)$ and a.e. $t\in (0,T)$,
\begin{align*}
\langle v_{tt},w\rangle_{-\frac 12,\frac 12}+\langle (-\Delta)^s(v+\delta v_t),w\rangle_{-\frac 12,\frac 12}=0.
\end{align*}  
\item Initial and exterior conditions: 
\begin{align}\label{Var-I}
v(\cdot,0)=0, \;v_t(\cdot,0)=0\;\;\mbox{ in }\;\Omega\;\mbox{ and }\; v=g\;\mbox {in }\;(\Omc)\times (0,T).
\end{align}
\end{itemize}
\end{definition}

It follows from Definition \ref{def-strong-sol}, that for a weak solution $(v,v_t)$ of  the system \eqref{main-EQ-2}, we have that  the functions $(v(\cdot,T),v_t(\cdot,T))\in  L^2(\Omega)\times W^{-s,2}(\bOm)$.

Using the classical integration by parts formula, we have that the following backward system 
\begin{equation}\label{ACP-Dual}
\begin{cases}
\psi_{tt} +(-\Delta)^s\psi-\delta(\Delta)^s\psi_t=0\;\;&\mbox{ in }\; \Omega\times (0,T),\\
\psi=0&\mbox{ in }\;(\Omc)\times (0,T),\\
\psi(\cdot,T)=\psi_0,\;\;\;\; \psi_t(\cdot,T)=-\psi_1\;&\mbox{ in }\;\Omega,
\end{cases}
\end{equation}
is the dual system associated with \eqref{main-EQ-2}.
Our notion of weak solutions to \eqref{ACP-Dual} is as follows.

\begin{definition}
Let $(\psi_0,\psi_1) \in W_0^{s,2}(\bOm)\times L^2(\Omega)$. 
A function $(\psi,\psi_t)$ is said to be a weak  solution of \eqref{ACP-Dual}, if for a.e. $t\in (0,T)$,  the following properties hold.

\begin{itemize}
\item Regularity and final data:
\begin{equation}\label{Dual-egu}
\begin{cases}
\psi\in C([0,T];W_0^{s,2}(\bOm))\cap C^1([0,T]; L^2(\Omega)), \\
 \psi_{tt}\in C((0,T);W^{-s,2}(\bOm)),
 \end{cases}
\end{equation}
and $\psi(\cdot,T)=\psi_0$, $\psi_t(\cdot,T)=\psi_1$ in $\Omega$.
\item Variational  identity: For every $w\in W_0^{s,2}(\bOm)$ and a.e. $t\in (0,T)$, 
\begin{align*}
\langle \psi_{tt},w\rangle_{-\frac 12,\frac 12}+\langle(-\Delta)^s(\psi-\delta\psi_t),w\rangle_{-\frac 12,\frac 12}=0.
\end{align*}  
\end{itemize}
\end{definition}

The next theorem, which is our second main result, says that the adjoint system \eqref{ACP-Dual} satisfies the {\em unique continuation property for evolution equations}.

\begin{theorem}\label{pro-uni-con}
Let $(\psi_0,\psi_1)\in W_0^{s,2}(\bOm)\times L^2(\Omega)$ and $(\psi,\psi_t)$ the unique weak solution of \eqref{ACP-Dual}. Let $\mathcal O\subset\Omc$ be an arbitrary non-empty open set. If $\mathcal N_s\psi=0$ in $\mathcal O\times (0,T)$, then $\psi=0$ in $\Omega\times (0,T)$. Here, $\mathcal N_s\psi$ is the nonlocal normal derivative of $\psi$ defined in \eqref{NLND} below.
\end{theorem}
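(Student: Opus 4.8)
The plan is to reduce the unique continuation statement for the adjoint evolution problem \eqref{ACP-Dual} to the (strong) unique continuation property of the fractional Laplacian, via a separation of variables in time. Let $(\varphi_n)_{n\ge1}$ be the $L^2(\Omega)$–orthonormal basis of eigenfunctions of the realization of $(-\Delta)^s$ in $L^2(\Omega)$ with the zero exterior Dirichlet condition, associated with the eigenvalues $0<\lambda_1\le\lambda_2\le\cdots$; these objects, together with the series expansion of weak solutions of \eqref{ACP-Dual} in this basis, I would take from the function–space preliminaries (Section \ref{preli}) and from the well-posedness results (Section \ref{sec-4}). Writing $\psi(x,t)=\sum_{n\ge1}\widehat\psi_n(t)\varphi_n(x)$, each coefficient $\widehat\psi_n$ solves the scalar linear ODE $\widehat\psi_n''+\lambda_n\widehat\psi_n-\delta\lambda_n\widehat\psi_n'=0$ on $(0,T)$, whose characteristic roots are $\rho_n^{\pm}=\tfrac12\big(\delta\lambda_n\pm\sqrt{\delta^2\lambda_n^2-4\lambda_n}\big)$, and the nonlocal normal derivative \eqref{NLND} of $\psi$ is $\mathcal N_s\psi(x,t)=\sum_{n\ge1}\widehat\psi_n(t)\,\mathcal N_s\varphi_n(x)$ on $(\Omc)\times(0,T)$.

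The first step is to separate variables in time. Reading the characteristic equation $\rho^2-\delta\lambda\rho+\lambda=0$ as affine in $\lambda$, its only root is $\lambda=\rho^2/(\delta\rho-1)$ whenever $\delta\rho\ne1$, and the degenerate case $\delta\rho=1$ forces $\rho=0$, hence cannot occur; consequently the exponent pairs $\{\rho_n^+,\rho_n^-\}$ attached to distinct eigenvalues are disjoint, whereas within a single eigenvalue the two exponents coincide only in the critically damped case $\delta^2\lambda_n=4$, where the second fundamental solution is $t\,e^{\rho_n t}$. Therefore each $\widehat\psi_n$ is a linear combination of functions $e^{\rho t}$ (and possibly $t\,e^{\rho t}$) whose exponents, as $n$ ranges over $\N$, are pairwise distinct; such functions are real-analytic and linearly independent on $(0,T)$. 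Feeding this into the identity $\sum_{n\ge1}\widehat\psi_n(t)\,\mathcal N_s\varphi_n(x)=0$ on $\mathcal O\times(0,T)$, grouping by eigenvalue and matching the coefficients of the distinct time profiles, I obtain that for every eigenvalue $\lambda$ and every time profile occurring in the expansion, the corresponding finite linear combination $w:=\sum_{\lambda_n=\lambda}c_n\varphi_n$ satisfies $\mathcal N_s w=0$ in $\mathcal O$.

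Finally I would show that any such $w$ is identically zero. Being a combination of Dirichlet eigenfunctions, $w=0$ in $\Omc$, in particular $w=0$ in $\mathcal O$; writing out the principal-value integral defining $(-\Delta)^s w(x)$ for $x\in\mathcal O$ and using that $w$ vanishes outside $\Omega$, one sees that $(-\Delta)^s w$ equals, up to a positive constant, $\mathcal N_s w$ on $\mathcal O$, so that $(-\Delta)^s w=0$ in $\mathcal O$ as well. Then the strong unique continuation property of $(-\Delta)^s$ — a function of $W^{s,2}(\RR^N)$ whose value and whose fractional Laplacian both vanish on a non-empty open set is identically zero — yields $w\equiv0$, hence $c_n=0$ for all $n$ with $\lambda_n=\lambda$. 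Since this holds for every time profile and every eigenvalue, $\widehat\psi_n\equiv0$ for all $n$, and therefore $\psi=0$ in $\Omega\times(0,T)$.

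The hard part will be the time-separation step: one must treat the under-, over- and critically damped regimes uniformly, check once and for all (via the algebraic fact that a characteristic root determines its eigenvalue) that exponents attached to different eigenvalues never collide, and justify that $\mathcal N_s$ may be applied term by term to the series defining $\psi$ in the appropriate topology on $\mathcal O\times(0,T)$. The other delicate point — though it is by now a standard tool — is the passage from ``$w$ and $\mathcal N_s w$ vanish on $\mathcal O$'' to $w\equiv0$ through the fractional unique continuation property; it is precisely this nonlocal phenomenon, with no counterpart for the local wave equation, that makes an exterior observation region $\mathcal O\subset\Omc$ enough to force $\psi\equiv0$ in $\Omega\times(0,T)$.
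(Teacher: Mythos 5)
Your overall skeleton matches the paper's: expand $\psi$ in the eigenbasis of $(-\Delta)_D^s$, separate the time profiles attached to each eigenvalue, and reduce to an elliptic unique continuation statement for finite combinations within a single eigenspace. Your elliptic endgame is fine: observing that $w=0$ in $\Omc$ forces $(-\Delta)^s w=\mathcal N_s w$ on $\mathcal O$ and then invoking the Ghosh--Salo--Uhlmann type unique continuation is exactly the content of Lemma \ref{lem-UCD}, which you could cite directly. Your algebraic remark that a characteristic root determines its eigenvalue (so exponents attached to distinct $\lambda_n$ never collide) is also correct and is implicitly used by the paper.

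The genuine gap is the time-separation step, which you dispose of by ``linear independence of the time profiles.'' Linear independence only lets you match coefficients in a \emph{finite} vanishing combination; here the identity $\sum_{n\ge1}\widehat\psi_n(t)\,\mathcal N_s\varphi_n(x)=0$ on $\mathcal O\times(0,T)$ is an infinite series, and for $\delta>0$ the exponents $\lambda_n^{+}$ accumulate at the finite point $-\delta$ (Remark \ref{rem-41}). A family of exponentials whose exponents accumulate is not minimal in $L^2(0,T)$, so one genuinely cannot extract the coefficients of an $L^2$-convergent series of such exponentials by independence alone; as written, this step would fail. The paper supplies the missing mechanism: the map $t\mapsto\mathcal N_s\psi(\cdot,t)$ extends analytically to the half-plane $\Sigma_T$ (Theorem \ref{theo-48}(c)), so the vanishing propagates to $\mathcal O\times(-\infty,T)$; one then takes a Laplace transform in $t$ (justified by dominated convergence through the uniform bounds \eqref{ES-C}--\eqref{ES-D}), obtaining a series of meromorphic functions of $z$ with simple poles at the exponents that vanishes for $\operatorname{Re}(z)>0$; after analytic continuation in $z$, each pole is isolated (here the asymptotics $\lambda_n^+\to-\delta$, $\lambda_n^-\to-\infty$ and the injectivity you noted are used), and contour integration around a small circle about each pole extracts precisely the eigenspace-wise combinations \eqref{51}, \eqref{52}, \eqref{54} to which Lemma \ref{lem-UCD} is applied. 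Your proposal needs this analytic-continuation-plus-residue argument (or an equivalent quantitative separation of the modes); without it, the passage from the vanishing of the series to the vanishing of each grouped coefficient is unsupported. The term-by-term application of $\mathcal N_s$, which you also flag, is comparatively routine and is handled in the paper via the boundedness of $\mathcal N_s:W^{s,2}(\R^N)\to L^2(\Omc)$ together with \eqref{27.4}.
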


The last main result concerns the approximate controllability of \eqref{SD-WE}. For this, we notice that the study of the approximate controllability of  \eqref{SD-WE} can be reduced to the case $u_0=u_1=0$ (see e.g. \cite{KW,CLR-MW,rosier2007,War-AA,War-ACE,Zua1}). 

\begin{theorem}\label{main-Theo}
The system \eqref{SD-WE} is approximately controllable for any $T>0$ and any control function $g\in \mathcal D(\mathcal O\times(0,T))$, where $\mathcal O\subset\Omc$ is an arbitrary non-empty open set. That is,
\begin{align*}
\overline{\mathcal R((0,0),T)}^{L^2(\Omega)\times W^{-s,2}(\bOm)}=&\overline{\left\{\left(u(\cdot,T),u_t(\cdot,T)\right):\;  g\in \mathcal D(\mathcal O\times(0,T))\right\}}^{L^2(\Omega)\times W^{-s,2}(\bOm)}\\
=&L^2(\Omega)\times W^{-s,2}(\bOm),
\end{align*}
where $(u,u_t)$ is the unique weak solution of \eqref{SD-WE} with $u_0=u_1=0$.
\end{theorem}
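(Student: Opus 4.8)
The plan is to deduce Theorem~\ref{main-Theo} from the unique continuation property of Theorem~\ref{pro-uni-con} via the classical duality argument for approximate controllability. As recalled before the statement, it suffices to treat $u_0=u_1=0$, so the object to analyze is the range of the linear map $g\longmapsto\big(v(\cdot,T),v_t(\cdot,T)\big)$, $g\in\mathcal D(\mathcal O\times(0,T))$, where $(v,v_t)$ is the weak solution of \eqref{main-EQ-2}. Since $L^2(\Omega)\times W^{-s,2}(\bOm)$ is reflexive with dual $L^2(\Omega)\times W_0^{s,2}(\bOm)$, the Hahn--Banach theorem reduces the density of $\mathcal R((0,0),T)$ to the triviality of its annihilator: if $(\varphi_1,\varphi_0)\in L^2(\Omega)\times W_0^{s,2}(\bOm)$ satisfies
\begin{equation*}
\big(v(\cdot,T),\varphi_1\big)_{L^2(\Omega)}+\big\langle v_t(\cdot,T),\varphi_0\big\rangle_{-\frac12,\frac12}=0\qquad\text{for every } g\in\mathcal D(\mathcal O\times(0,T)),
\end{equation*}
then $(\varphi_1,\varphi_0)=(0,0)$. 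To $(\varphi_1,\varphi_0)$ I associate the unique weak solution $(\psi,\psi_t)$ of the backward system \eqref{ACP-Dual} with final data $\psi(\cdot,T)=\varphi_0$ and $\psi_t(\cdot,T)=-\varphi_1$.

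The heart of the matter is a duality (transposition) identity linking $g$, the state $v$ and the adjoint state $\psi$. I would multiply the equation for $v$ by $\psi$, integrate over $\Omega\times(0,T)$, integrate by parts in time using the homogeneous initial data $v(\cdot,0)=v_t(\cdot,0)=0$ and the vanishing of $g$ at $t=0$ and $t=T$, and apply the nonlocal Green formula for $(-\Delta)^s$, taking into account that $\psi\equiv 0$ in $\Omc$ while $v=g$ is supported in $\mathcal O\subset\Omc$, and that the damping contributions $\delta(-\Delta)^sv_t$ and $-\delta(-\Delta)^s\psi_t$ assemble into a time derivative. After these manipulations all bulk and endpoint terms cancel and one is left with an identity of the schematic form
\begin{equation*}
\big(v(\cdot,T),\varphi_1\big)_{L^2(\Omega)}+\big\langle v_t(\cdot,T),\varphi_0\big\rangle_{-\frac12,\frac12}=\int_0^T\!\!\int_{\mathcal O} g\,\mathcal N_s\big(\psi-\delta\psi_t\big)\,dx\,dt ,
\end{equation*}
up to a sign, the right-hand side reducing to $\int_0^T\int_{\mathcal O}g\,\mathcal N_s\psi\,dx\,dt$ when $\delta=0$. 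I expect the main obstacle to be making this identity rigorous: the weak solutions enjoy only the limited regularity \eqref{regu} and \eqref{Dual-egu}, so the products above are a priori only duality pairings, and the nonhomogeneous exterior condition $v=g$ forces care in the Green formula; the natural route is to prove the identity first for smooth data by means of the spectral series representations built in Section~\ref{sec-4}, and then to extend it by density, exploiting that $g\in\mathcal D(\mathcal O\times(0,T))$.

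Granting the identity, the conclusion follows quickly. If $(\varphi_1,\varphi_0)$ annihilates $\mathcal R((0,0),T)$, the left-hand side vanishes for all $g\in\mathcal D(\mathcal O\times(0,T))$, hence $\int_0^T\int_{\mathcal O} g\,\mathcal N_s(\psi-\delta\psi_t)\,dx\,dt=0$ for every such $g$, and since $g$ runs over all of $\mathcal D(\mathcal O\times(0,T))$ this forces $\mathcal N_s(\psi-\delta\psi_t)=0$ in $\mathcal O\times(0,T)$ (which is $\mathcal N_s\psi=0$ when $\delta=0$). When $\delta=0$, Theorem~\ref{pro-uni-con} gives directly $\psi=0$ in $\Omega\times(0,T)$. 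When $\delta>0$, I would observe that $\psi-\delta\psi_t$ is again a weak solution of the adjoint system \eqref{ACP-Dual} (differentiate the equation in $t$) vanishing in $\Omc$ and with $\mathcal N_s(\psi-\delta\psi_t)=0$ in $\mathcal O\times(0,T)$, so Theorem~\ref{pro-uni-con} yields $\psi-\delta\psi_t=0$ in $\Omega\times(0,T)$; then $\psi_{tt}=-(-\Delta)^s(\psi-\delta\psi_t)=0$ in $\Omega\times(0,T)$, and combining $\psi_{tt}=0$ with $\psi=\delta\psi_t$ forces $\psi=0$ in $\Omega\times(0,T)$ as well. In either case, together with the exterior condition $\psi\equiv 0$ in $\Omc\times(0,T)$ we obtain $\psi\equiv 0$ on $\RR^N\times(0,T)$, and the continuity $\psi\in C([0,T];W_0^{s,2}(\bOm))\cap C^1([0,T];L^2(\Omega))$ yields, letting $t\uparrow T$, $\varphi_0=\psi(\cdot,T)=0$ and $\varphi_1=-\psi_t(\cdot,T)=0$. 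Hence the annihilator is trivial, $\mathcal R((0,0),T)$ is dense in $L^2(\Omega)\times W^{-s,2}(\bOm)$, and Theorem~\ref{main-Theo} follows; since every step is valid for all $\delta\ge 0$, the case $\delta=0$ of the pure nonlocal wave equation is covered as well.
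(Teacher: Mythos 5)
Your overall strategy is the paper's: reduce to the triviality of the annihilator, pass to the adjoint system \eqref{ACP-Dual}, derive a duality identity whose exterior term involves $\mathcal N_s\psi$, and conclude with unique continuation (Theorem \ref{pro-uni-con}). For $\delta=0$ your argument matches the paper and is fine. For $\delta>0$, however, there are two genuine problems. First, the duality identity is not the one you wrote: integrating the damping term by parts in time leaves a nonvanishing endpoint contribution $\delta\int_\Omega v(\cdot,T)(-\Delta)^s\psi(\cdot,T)\,dx$ (it cannot cancel, since $v(\cdot,T)$ is precisely the reachable state), so the correct identity pairs $v(\cdot,T)$ with $\psi_1-\delta(-\Delta)^s\psi_0$ rather than with $\psi_1$; see \eqref{32}, \eqref{58}, \eqref{58-2}. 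Because of this mismatch, testing the annihilation condition $(v(\cdot,T),\varphi_1)_{L^2(\Omega)}+\langle v_t(\cdot,T),\varphi_0\rangle_{-\frac12,\frac12}=0$ against the adjoint solution with data $(\varphi_0,\varphi_1)$ does not directly give the vanishing of the exterior integral; one must adjust the adjoint data (which requires $\varphi_0\in D((-\Delta)_D^s)$) and then use the density of $D((-\Delta)_D^s)\times L^2(\Omega)$ in $W_0^{s,2}(\bOm)\times L^2(\Omega)$, exactly as the paper does.

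Second, and more seriously, your treatment of the resulting condition for $\delta>0$ has a gap. You are right (indeed more careful than the displayed step in the paper, whose right-hand side is $\int_0^T\int_{\Omc}(g+\delta g_t)\mathcal N_s\psi\,dx\,dt$) that the information obtained is $\mathcal N_s(\psi-\delta\psi_t)=0$ in $\mathcal O\times(0,T)$, not $\mathcal N_s\psi=0$. But you then apply Theorem \ref{pro-uni-con} to $\phi:=\psi-\delta\psi_t$, and that theorem only covers weak solutions of \eqref{ACP-Dual} with final data in $W_0^{s,2}(\bOm)\times L^2(\Omega)$ and regularity \eqref{Dual-egu}. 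Here $\phi(\cdot,T)=\psi_0+\delta\psi_1$ is in general only in $L^2(\Omega)$, $\phi_t(\cdot,T)=\psi_t(\cdot,T)-\delta\psi_{tt}(\cdot,T)$ is only in $W^{-s,2}(\bOm)$, and $\phi_t=\psi_t-\delta\psi_{tt}$ need not be continuous into $L^2(\Omega)$ since a priori $\psi_{tt}\in C((0,T);W^{-s,2}(\bOm))$ only; so Theorem \ref{pro-uni-con} does not apply to $\phi$ as stated. A correct completion along your lines would either rerun the spectral/Laplace-transform residue argument of the proof of Theorem \ref{pro-uni-con} for the quantity $\mathcal N_s\psi-\delta\partial_t\mathcal N_s\psi$ (the residues get multiplied by the nonzero factors $1-\delta\lambda_k^{\pm}$, resp. $1-\delta(\alpha_k\pm i\beta_k)$, so the same mode-by-mode conclusion follows), or exploit the smoothing of the strongly damped equation for $t<T$ to apply Theorem \ref{pro-uni-con} on $(0,T')$, $T'<T$, and let $T'\uparrow T$; neither step is in your proposal, so as written the $\delta>0$ case is not proved.
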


\section{Preliminaries}\label{preli}

In this section we give some notations and recall some known results as they are needed in the proof of our main results.
We start with fractional order Sobolev spaces. Given $0<s<1$,  we let

\begin{align*}
W^{s,2}(\Omega):=\left\{u\in L^2(\Omega):\;\int_{\Omega}\int_{\Omega}\frac{|u(x)-u(y)|^2}{|x-y|^{N+2s}}\;dxdy<\infty\right\},
\end{align*}
and we endow it with the norm defined by
\begin{align*}
\|u\|_{W^{s,2}(\Omega)}:=\left(\int_{\Omega}|u(x)|^2\;dx+\int_{\Omega}\int_{\Omega}\frac{|u(x)-u(y)|^2}{|x-y|^{N+2s}}\;dxdy\right)^{\frac 12}.
\end{align*}
We set
\begin{align*}
W_0^{s,2}(\bOm):=\Big\{u\in W^{s,2}(\RR^N):\;u=0\;\mbox{ in }\;\RR^N\setminus\Omega\Big\}.
\end{align*}

For more information on fractional order Sobolev spaces, we refer to \cite{NPV,War}.

Next, we give a rigorous definition of the fractional Laplace operator. Let 
\begin{align*}
\mathcal L_s^{1}(\RR^N):=\left\{u:\RR^N\to\RR\;\mbox{ measurable},\; \int_{\RR^N}\frac{|u(x)|}{(1+|x|)^{N+2s}}\;dx<\infty\right\}.
\end{align*}
For $u\in \mathcal L_s^{1}(\RR^N)$ and $\varepsilon>0$ we set
\begin{align*}
(-\Delta)_\varepsilon^s u(x):= C_{N,s}\int_{\{y\in\RR^N:\;|x-y|>\varepsilon\}}\frac{u(x)-u(y)}{|x-y|^{N+2s}}\;dy,\;\;x\in\RR^N,
\end{align*}
where $C_{N,s}$ is a normalization constant given by
\begin{align}\label{CNs}
C_{N,s}:=\frac{s2^{2s}\Gamma\left(\frac{2s+N}{2}\right)}{\pi^{\frac
N2}\Gamma(1-s)}.
\end{align}
The {\em fractional Laplacian}  $(-\Delta)^s$ is defined by the following singular integral:
\begin{align}\label{fl_def}
(-\Delta)^su(x):=C_{N,s}\,\mbox{P.V.}\int_{\RR^N}\frac{u(x)-u(y)}{|x-y|^{N+2s}}\;dy=\lim_{\varepsilon\downarrow 0}(-\Delta)_\varepsilon^s u(x),\;\;x\in\RR^N,
\end{align}
provided that the limit exists. 
Note that $\mathcal L_s^{1}(\RR^N)$ is the right space for which $ v:=(-\Delta)_\varepsilon^s u$ exists for every $\varepsilon>0$, $v$ being also continuous at the continuity points of  $u$.  
For more details on the fractional Laplace operator we refer to \cite{BBC,Caf1,NPV,GW,GW-CPDE,GW2,War,War-In} and their references.

Next, we consider the following Dirichlet problem:
\begin{equation}\label{EDP}
\begin{cases}
(-\Delta)^su=0\;\;&\mbox{ in }\;\Omega,\\
u=g&\mbox{ in }\;\RR^N\setminus\Omega.
\end{cases}
\end{equation}

\begin{definition}\label{def-sol}
Let $g\in W^{s,2}(\Omc)$ and $G\in W^{s,2}(\RR^N)$ be such that $G|_{\Omc}=g$. A function $u\in W^{s,2}(\RR^N)$ is said to be a weak solution of \eqref{EDP} if $u-G\in W_0^{s,2}(\bOm)$ and 
\begin{align*}
\int_{\RR^N}\int_{\RR^N}\frac{(u(x)-u(y))(v(x)-v(y))}{|x-y|^{N+2s}}\;dxdy=0,\;\;\forall\; v\in W_0^{s,2}(\bOm).
\end{align*}
\end{definition}

The following existence result is taken from \cite{Grub} (see also \cite{GSU}).

\begin{proposition}\label{proposi-33}
For any $g\in W^{s,2}(\RR^N\setminus\Omega)$, there is a unique $u\in W^{s,2}(\RR^N)$ satisfying \eqref{EDP} in the sense of Definition \ref{def-sol}. In addition, there is a constant $C>0$ such that

\begin{align*} 
\|u\|_{W^{s,2}(\RR^N)}\le C\|g\|_{W^{s,2}(\RR^N\setminus\Omega)}.
\end{align*}
\end{proposition}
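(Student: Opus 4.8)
The plan is to reduce the exterior Dirichlet problem \eqref{EDP} to a coercive variational problem on the Hilbert space $W_0^{s,2}(\bOm)$ and invoke the Lax–Milgram theorem. Given $g \in W^{s,2}(\Omc)$, first fix any extension $G \in W^{s,2}(\RR^N)$ with $G|_{\Omc} = g$; such a $G$ exists because $W^{s,2}(\Omc)$ admits a bounded extension operator to $W^{s,2}(\RR^N)$ (one may also simply take $G$ to be the function equal to $g$ on $\Omc$ and extended suitably on $\Omega$), and one can arrange $\|G\|_{W^{s,2}(\RR^N)} \le C_1 \|g\|_{W^{s,2}(\Omc)}$. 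According to Definition \ref{def-sol}, a weak solution has the form $u = G + \varphi$ with $\varphi \in W_0^{s,2}(\bOm)$, and the defining identity becomes: find $\varphi \in W_0^{s,2}(\bOm)$ such that
\begin{align*}
\mathcal E(\varphi, v) := \int_{\RR^N}\int_{\RR^N}\frac{(\varphi(x)-\varphi(y))(v(x)-v(y))}{|x-y|^{N+2s}}\;dxdy = -\mathcal E(G, v)
\end{align*}
for all $v \in W_0^{s,2}(\bOm)$. Thus the whole problem is to solve this single linear equation for $\varphi$.

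Next I would check the hypotheses of Lax–Milgram for the bilinear form $\mathcal E$ on $W_0^{s,2}(\bOm) \times W_0^{s,2}(\bOm)$. Boundedness is immediate from Cauchy–Schwarz: $|\mathcal E(\varphi, v)| \le [\varphi]_{W^{s,2}(\RR^N)}[v]_{W^{s,2}(\RR^N)}$, where $[\cdot]_{W^{s,2}(\RR^N)}$ denotes the Gagliardo seminorm. Coercivity is the one genuinely nontrivial ingredient: one needs a fractional Poincaré inequality stating that for functions in $W_0^{s,2}(\bOm)$ (i.e. functions in $W^{s,2}(\RR^N)$ vanishing outside the bounded set $\Omega$) one has $\|\varphi\|_{L^2(\Omega)} \le C [\varphi]_{W^{s,2}(\RR^N)}$, so that the seminorm is actually an equivalent norm on $W_0^{s,2}(\bOm)$. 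This is standard for bounded $\Omega$ (it follows from the fractional Sobolev embedding or can be proved directly by a scaling/compactness argument), and it gives $\mathcal E(\varphi,\varphi) = [\varphi]_{W^{s,2}(\RR^N)}^2 \ge c \|\varphi\|_{W^{s,2}(\RR^N)}^2$. The linear functional $v \mapsto -\mathcal E(G,v)$ is bounded with norm $\le [G]_{W^{s,2}(\RR^N)} \le C_1\|g\|_{W^{s,2}(\Omc)}$. Lax–Milgram then yields a unique $\varphi$, hence existence and uniqueness of $u = G + \varphi$; moreover uniqueness does not depend on the choice of $G$ because any two weak solutions differ by an element of $W_0^{s,2}(\bOm)$ that is $\mathcal E$-orthogonal to all of $W_0^{s,2}(\bOm)$, forcing it to vanish by coercivity.

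Finally, for the estimate, the coercivity and boundedness bounds give $c\|\varphi\|_{W^{s,2}(\RR^N)}^2 \le \mathcal E(\varphi,\varphi) = -\mathcal E(G,\varphi) \le [G]_{W^{s,2}(\RR^N)}\|\varphi\|_{W^{s,2}(\RR^N)}$, so $\|\varphi\|_{W^{s,2}(\RR^N)} \le c^{-1}[G]_{W^{s,2}(\RR^N)} \le c^{-1}C_1\|g\|_{W^{s,2}(\Omc)}$, and then by the triangle inequality $\|u\|_{W^{s,2}(\RR^N)} \le \|G\|_{W^{s,2}(\RR^N)} + \|\varphi\|_{W^{s,2}(\RR^N)} \le C\|g\|_{W^{s,2}(\Omc)}$ with $C = C_1(1 + c^{-1})$. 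I expect the main (really the only) obstacle to be justifying the fractional Poincaré inequality underlying coercivity; once that is in place the argument is routine Hilbert-space theory. Since the proposition is quoted from \cite{Grub, GSU}, one could alternatively just cite those references, but the Lax–Milgram route above is self-contained.
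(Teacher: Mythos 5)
Your proposal is correct. One thing to be aware of: the paper does not actually prove Proposition \ref{proposi-33} --- it is quoted from \cite{Grub} (see also \cite{GSU}) --- so there is no internal argument to compare against; your Lax--Milgram reduction is the standard self-contained proof of this statement, and it goes through as written. The two ingredients you flag are indeed the only points requiring justification, and both are unproblematic here. First, the extension $G\in W^{s,2}(\RR^N)$ of $g$ with $\|G\|_{W^{s,2}(\RR^N)}\le C_1\|g\|_{W^{s,2}(\Omc)}$ exists because $\partial\Omega$ is Lipschitz, so the exterior domain $\Omc$ has the $W^{s,2}$-extension property; note also that Definition \ref{def-sol} is independent of the choice of $G$, since two extensions of the same $g$ differ by an element of $W_0^{s,2}(\bOm)$, which is exactly why your uniqueness argument works. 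Second, the fractional Poincar\'e inequality needed for coercivity admits a one-line proof in this setting: if $\varphi\in W_0^{s,2}(\bOm)$, then $\varphi=0$ outside $\Omega$, so for $x\in\Omega$ and $|y-x|>d:=\operatorname{diam}(\Omega)$ one has $\varphi(y)=0$, whence the Gagliardo seminorm squared is bounded below by $\int_{\Omega}|\varphi(x)|^2\int_{\{|y-x|>d\}}|x-y|^{-N-2s}\,dy\,dx=c(N,s)\,d^{-2s}\|\varphi\|_{L^2(\Omega)}^2$; no compactness or Sobolev embedding is needed. With these two points in place, boundedness, coercivity, Lax--Milgram, and the final estimate follow exactly as you wrote them.
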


We consider the closed and bilinear form

\begin{align*}
\mathcal F(u,v):=\frac{C_{N,s}}{2}\int_{\RR^N}\int_{\RR^N}\frac{(u(x)-u(y))(v(x)-v(y))}{|x-y|^{N+2s}}\;dxdy,\;\;u,v\in W_0^{s,2}(\bOm).
\end{align*}
Let $(-\Delta)_D^s$ be the selfadjoint operator in $L^2(\Omega)$ associated with $\mathcal F$ in the sense that

\begin{equation*}
\begin{cases}
D((-\Delta)_D^s)=\Big\{u\in W_0^{s,2}(\bOm),\;\exists\;f\in L^2(\Omega),\;\mathcal F(u,v)=(f,v)_{L^2(\Omega)}\;\forall\;v\in W_0^{s,2}(\bOm)\Big\},\\
(-\Delta)_D^su=f.
\end{cases}
\end{equation*}
More precisely, we have that

\begin{equation*} 
D((-\Delta)_D^s):=\left\{u\in W_0^{s,2}(\bOm),\; (-\Delta)^su\in L^2(\Omega)\right\},\;\;\;
(-\Delta)_D^su:=(-\Delta)^su.
\end{equation*}
Then $(-\Delta)_D^s$ is the realization of $(-\Delta)^s$ in $L^2(\Omega)$ with the condition $u=0$ in $\Omc$. It is well-known (see e.g. \cite{SV2,GW-F,War-ACE}) that $(-\Delta)_D^s$ has a compact resolvent and its eigenvalues form a non-decreasing sequence of real numbers $0<\lambda_1\le\lambda_2\le\cdots\le\lambda_n\le\cdots$ satisfying $\lim_{n\to\infty}\lambda_n=\infty$.  In addition, the eigenvalues are of finite multiplicity.
Let $(\varphi_n)_{n\in\NN}$ be the orthonormal basis of eigenfunctions associated with $(\lambda_n)_{n\in\NN}$. Then $\varphi_n\in D((-\Delta)_D^s)$ for every $n\in\NN$,  $(\varphi_n)_{n\in\NN}$ is total in $L^2(\Omega)$ and satisfies 
\begin{equation}\label{ei-val-pro}
\begin{cases}
(-\Delta)^s\varphi_n=\lambda_n\varphi_n\;\;&\mbox{ in }\;\Omega,\\
\varphi_n=0\;&\mbox{ in }\;\Omc.
\end{cases}
\end{equation}
With this setting, we have that
for $u\in W_0^{s,2}(\bOm)$, 
\begin{align}\label{norm-V}
\|u\|_{W_0^{s,2}(\bOm)}^2:=\sum_{n=1}^\infty\left|\lambda_n^{\frac 12}\left(u,\varphi_n\right)_{L^2(\Om)}\right|^2,
\end{align}
defines an equivalent norm on $W_0^{s,2}(\bOm)$. If $u\in D((-\Delta)_D^s)$, then
\begin{align*}
\|u\|_{D((-\Delta)_D^s)}^2=\|(-\Delta)_D^su\|_{L^2(\Omega)}^2=\sum_{n=1}^\infty\left|\lambda_n\left(u,\varphi_n\right)_{L^2(\Om)}\right|^2.
\end{align*}
In addition, for $u\in W^{-s,2}(\bOm)$, we have that
\begin{align}\label{norm-V-2}
\|u\|_{W^{-s,2}(\bOm)}^2=\sum_{n=1}^\infty\left|\lambda_n^{-\frac 12}\left(u,\varphi_n\right)_{L^2(\Om)}\right|^2.
\end{align}  
In that case, by the Gelfand triple (see e.g. \cite{ATW}), we have $W_0^{s,2}(\bOm)\hookrightarrow L^2(\Omega)\hookrightarrow W^{-s,2}(\bOm)$. 

Next, for $u\in W^{s,2}(\RR^N)$ we introduce the {\em nonlocal normal derivative $\mathcal N_s$} given by 
\begin{align}\label{NLND}
\mathcal N_su(x):=C_{N,s}\int_{\Omega}\frac{u(x)-u(y)}{|x-y|^{N+2s}}\;dy,\;\;\;x\in\RR^N\setminus\bOm,
\end{align}
where $C_{N,s}$ is the constant given in \eqref{CNs}.
By \cite[Lemma 3.2]{GSU},  for every $u\in W^{s,2}(\RR^N)$, we have that $\mathcal N_su\in L^2(\Omc)$.

The following unique continuation property which shall play an important role in the proof of our main result has been recently obtained in \cite[Theorem 3.10]{War-ACE}.

\begin{lemma}\label{lem-UCD}
Let $\lambda>0$ be a real number  and $\mathcal O\subset\Omb$ a non-empty open set. 
If $\varphi\in D((-\Delta)_D^s)$ satisfies
\begin{equation*}
(-\Delta)_D^s\varphi=\lambda\varphi\;\mbox{ in }\;\Omega\;\mbox{ and }\; \mathcal N_s\varphi=0\;\mbox{ in }\;\mathcal O,
\end{equation*}
then $\varphi=0$ in $\RR^N$. 
\end{lemma}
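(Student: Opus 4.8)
\textbf{Proof proposal for Theorem \ref{main-Theo}.}

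The plan is to use the standard duality (Hilbert Uniqueness Method) characterization of approximate controllability: the system \eqref{SD-WE} with $u_0=u_1=0$ is approximately controllable in $L^2(\Omega)\times W^{-s,2}(\bOm)$ if and only if the only pair $(\psi_0,\psi_1)\in W_0^{s,2}(\bOm)\times L^2(\Omega)$ for which the solution $\psi$ of the adjoint system \eqref{ACP-Dual} satisfies $\mathcal N_s\psi=0$ in $\mathcal O\times(0,T)$ is $(\psi_0,\psi_1)=(0,0)$. So the first step is to derive rigorously the duality identity linking the reachable states of \eqref{main-EQ-2} to $\mathcal N_s\psi$: multiplying the equation for $v$ by $\psi$, integrating over $\Omega\times(0,T)$, and using the integration-by-parts formula for $(-\Delta)^s$ together with the nonlocal Green formula involving $\mathcal N_s$ (as in \cite{GSU,War-ACE}), one obtains an identity of the form
\begin{align*}
\langle v_t(\cdot,T),\psi_0\rangle_{-\frac12,\frac12} - (v(\cdot,T),\psi_1)_{L^2(\Omega)} = -\int_0^T\!\!\int_{\mathcal O} g\,\big(\mathcal N_s\psi - \delta\,\mathcal N_s\psi_t\big)\,dx\,dt,
\end{align*}
or the cleaner variant where the damping term is absorbed into the test function; the precise bookkeeping of the $\delta$-terms and the time-boundary terms is done in Section \ref{sec-4}. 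From this identity, density of $\{(v(\cdot,T),v_t(\cdot,T)): g\in\mathcal D(\mathcal O\times(0,T))\}$ is equivalent to the implication ``$\mathcal N_s\psi=0$ on $\mathcal O\times(0,T)$ $\Rightarrow$ $(\psi_0,\psi_1)=0$''.

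The second step reduces this unique continuation statement to Theorem \ref{pro-uni-con} together with the well-posedness/regularity theory of the adjoint system. By Theorem \ref{pro-uni-con}, $\mathcal N_s\psi=0$ in $\mathcal O\times(0,T)$ forces $\psi=0$ in $\Omega\times(0,T)$. It then remains to conclude that $\psi\equiv 0$ in $\Omega\times(0,T)$ implies $\psi_0=\psi(\cdot,T)=0$ and $\psi_1=\psi_t(\cdot,T)=0$. Since $\psi\equiv 0$ on $\Omega\times(0,T)$ and $\psi=0$ on $\Omc\times(0,T)$ by the exterior condition, $\psi$ vanishes on $\RR^N\times(0,T)$, hence so do all its time derivatives in the appropriate weak sense; passing to the limit $t\uparrow T$ using the continuity $\psi\in C([0,T];W_0^{s,2}(\bOm))\cap C^1([0,T];L^2(\Omega))$ gives $\psi(\cdot,T)=\psi_t(\cdot,T)=0$, i.e. $(\psi_0,\psi_1)=(0,0)$.

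The third step is the passage from the auxiliary system \eqref{main-EQ-2} back to \eqref{SD-WE}: since $u=v+w$ with $w$ the (uncontrolled) solution of \eqref{main-EQ-3}, the reachable set $\mathcal R((0,0),T)$ is exactly $\{(v(\cdot,T),v_t(\cdot,T)):g\in\mathcal D(\mathcal O\times(0,T))\}$, so the density just proved is precisely the assertion of the theorem. I expect the main technical obstacle to be the rigorous justification of the duality identity in the low-regularity setting: the adjoint solution $\psi$ only lies in $C([0,T];W_0^{s,2}(\bOm))$, the state $v$ only in $C([0,T];L^2(\Omega))\cap C^1([0,T];W^{-s,2}(\bOm))$, so the integration-by-parts in both space (involving $\mathcal N_s$, which requires $W^{s,2}(\RR^N)$-regularity to even be defined as an $L^2(\Omc)$ function) and time must be carried out carefully — most cleanly by first working with smooth data and the series representation of solutions from Section \ref{sec-4}, establishing the identity termwise against the eigenbasis $(\varphi_n)$, and then passing to the limit by density; the fact that $g\in\mathcal D(\mathcal O\times(0,T))$ is smooth and compactly supported away from $\pOm$ is what makes the corresponding $v$ regular enough for $\mathcal N_s$-traces and makes this limiting argument go through.
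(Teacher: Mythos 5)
Your proposal does not address the statement at hand. The statement is Lemma \ref{lem-UCD}, a purely elliptic, spectral unique continuation property: if $\varphi\in D((-\Delta)_D^s)$ is an eigenfunction, $(-\Delta)_D^s\varphi=\lambda\varphi$ in $\Omega$, and its nonlocal normal derivative vanishes on a non-empty open set $\mathcal O\subset\Omb$, then $\varphi\equiv 0$ in $\RR^N$. What you wrote is instead a duality (HUM-type) outline for Theorem \ref{main-Theo}, the approximate controllability result. That argument cannot serve as a proof of the lemma: the whole point of the lemma is to supply the unique continuation input that your duality scheme presupposes. In the paper's logical chain, Lemma \ref{lem-UCD} is the base ingredient, Theorem \ref{pro-uni-con} is proved \emph{from} it (by expanding $\mathcal N_s\psi$ in the eigenbasis, extending analytically in $t$, and isolating each spectral component by contour integration, at which point Lemma \ref{lem-UCD} is invoked for each eigenvalue), and Theorem \ref{main-Theo} is then deduced by the duality identity \eqref{58}--\eqref{58-2}. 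Invoking Theorem \ref{pro-uni-con} to establish the lemma, as your second step implicitly would, is circular.

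For the record, the paper does not prove Lemma \ref{lem-UCD} itself; it quotes it from \cite[Theorem 3.10]{War-ACE}. A direct argument runs along the following lines, none of which appear in your proposal: since $\varphi=0$ in $\Omc$, for $x\in\Omb$ one has
\begin{equation*}
\mathcal N_s\varphi(x)=-C_{N,s}\int_{\Omega}\frac{\varphi(y)}{|x-y|^{N+2s}}\,dy,
\end{equation*}
which is (up to sign and constant) the evaluation of $(-\Delta)^s\varphi$ outside $\bOm$; hence $\mathcal N_s\varphi=0$ in $\mathcal O$ means that both $\varphi$ and $(-\Delta)^s\varphi$ vanish on the open set $\mathcal O\subset\Omc$, and the unique continuation property of the fractional Laplacian (e.g.\ Ghosh--Salo--Uhlmann, used in \cite{GSU,War-ACE}), combined with the real-analyticity of the above integral in $\Omb$, forces $\varphi\equiv 0$ in all of $\RR^N$. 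If your task was the lemma, you need an argument of this elliptic/nonlocal-analyticity type, not the controllability duality machinery.
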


\begin{remark}
{\em The following important identity has been recently proved in \cite[Remark 3.11]{War-ACE}.
Let $g\in W^{s,2}(\RR^N\setminus\Omega)$ and $U_g\in W^{s,2}(\RR^N)$ the associated unique weak solution of  \eqref{EDP}. Then
\begin{align}\label{eqA9}
\int_{\Omc}g\mathcal N_s\varphi_n\;dx=-\lambda_n\int_{\Omega}\varphi_nU_g\;dx.
\end{align}
}
\end{remark}

For more details on the Dirichlet problem associated with the fractional Laplace operator we refer the interested reader to \cite{BWZ2,BWZ1,BBC,Caf1,Grub,RS2-2,RS-DP,War,War-ACE} and their references.

The following integration by parts formula is contained in \cite[Lemma 3.3]{DRV} for smooth functions. The version given here can be obtained by using a simple density argument (see e.g. \cite{War-ACE}).

\begin{proposition}
 Let $u\in W^{s,2}(\RR^N)$ be such that $(-\Delta)^su\in L^2(\Omega)$. Then for every $v\in W^{s,2}(\RR^N)$, we have
\begin{align}\label{Int-Part}
\frac{C_{N,s}}{2}\int\int_{\RR^{2N}\setminus(\Omc)^2}&\frac{(u(x)-u(y))(v(x)-v(y))}{|x-y|^{N+2s}}\;dxdy\notag\\
=&\int_{\Omega}v(-\Delta)^su\;dx+\int_{\Omc}v\mathcal N_su\;dx.
\end{align}
\end{proposition}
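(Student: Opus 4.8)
\textbf{Proof proposal for the approximate controllability result (Theorem \ref{main-Theo}).}

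The plan is to use the standard duality characterization of approximate controllability together with the unique continuation property of Theorem \ref{pro-uni-con}. First I would establish the appropriate bilinear identity linking a weak solution $(v,v_t)$ of \eqref{main-EQ-2} (with control $g$) to a weak solution $(\psi,\psi_t)$ of the adjoint system \eqref{ACP-Dual}. Multiplying the equation for $v$ by $\psi$, integrating over $\Omega\times(0,T)$, and using the integration by parts formula \eqref{Int-Part} together with the fact that $\psi=0$ in $\Omc$ and $v=g$ in $\Omc\times(0,T)$, one arrives at an identity of the form
\begin{align*}
\langle v_t(\cdot,T),\psi_0\rangle_{-\frac12,\frac12}-\big(v(\cdot,T),\psi_1\big)_{L^2(\Omega)}
=-\int_0^T\int_{\Omc}g\,\mathcal N_s\big(\psi-\delta\psi_t\big)\;dx\,dt.
\end{align*}
The time-derivative terms must be handled carefully because of the strong damping term $\delta(-\Delta)^s v_t$; the pairing of $(-\Delta)^s v_t$ with $\psi$ integrated in time should, after integration by parts in $t$ and exploiting $\psi=0$ in $\Omc$, contribute the $\delta\mathcal N_s\psi_t$ piece to the boundary integral and cancel against the $-\delta(-\Delta)^s\psi_t$ term in the adjoint equation. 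I expect the cleanest route is to first derive this identity for smooth data using the series representation of solutions from Section \ref{sec-4}, then pass to general data by density.

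Next I would invoke the duality argument: $(v(\cdot,T),v_t(\cdot,T))$ ranges over a set whose closure in $L^2(\Omega)\times W^{-s,2}(\bOm)$ equals the whole space if and only if the only $(\psi_0,\psi_1)$ for which the right-hand side above vanishes for every admissible control $g\in\mathcal D(\mathcal O\times(0,T))$ is $(\psi_0,\psi_1)=(0,0)$. Since $g$ ranges over $\mathcal D(\mathcal O\times(0,T))$, vanishing of $\int_0^T\int_{\Omc}g\,\mathcal N_s(\psi-\delta\psi_t)\,dx\,dt$ for all such $g$ forces $\mathcal N_s(\psi-\delta\psi_t)=0$ in $\mathcal O\times(0,T)$. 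At this point I would want to conclude $\mathcal N_s\psi=0$ in $\mathcal O\times(0,T)$ and then apply Theorem \ref{pro-uni-con} to get $\psi=0$ in $\Omega\times(0,T)$, hence $\psi_0=\psi_1=0$.

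The main obstacle is precisely the step of going from $\mathcal N_s(\psi-\delta\psi_t)=0$ to $\mathcal N_s\psi=0$ when $\delta>0$. The natural approach is spectral: expand $\psi(\cdot,t)=\sum_n a_n(t)\varphi_n$ with the $a_n$ solving the decoupled second-order ODEs coming from \eqref{ACP-Dual} with the eigenvalues $\lambda_n$, so that $\mathcal N_s\psi=\sum_n a_n(t)\mathcal N_s\varphi_n$ and $\mathcal N_s(\psi-\delta\psi_t)=\sum_n(a_n-\delta a_n')\mathcal N_s\varphi_n$. One must argue that the relation $\sum_n(a_n(t)-\delta a_n'(t))\mathcal N_s\varphi_n=0$ on $\mathcal O$ propagates to $\sum_n a_n(t)\mathcal N_s\varphi_n=0$ on $\mathcal O$; grouping terms by distinct eigenvalues and using analyticity in $t$ of the modes (the functions $a_n-\delta a_n'$ and $a_n$ are, on each eigenspace, combinations of real exponentials/oscillatory terms with distinct exponents), together with the linear independence on $\mathcal O$ of the traces $\mathcal N_s\varphi_n$ guaranteed by Lemma \ref{lem-UCD}, should yield the claim. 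Alternatively, one can bypass this by writing the identity directly in terms of the reachable states via \eqref{eqA9}, reducing the orthogonality condition to $\sum_n c_n(\psi_0,\psi_1)\big(\varphi_n,U_g\big)_{L^2(\Omega)}=0$ for all $g$ and extracting, mode by mode, that the relevant coefficients vanish; I would carry out whichever of these is less technical once the solution formulas of Section \ref{sec-4} are in hand. The remaining details — density of $\mathcal D(\mathcal O\times(0,T))$ among admissible controls for the approximation, and continuity of the input-to-output map — are routine.
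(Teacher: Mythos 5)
Your proposal does not address the statement you were asked to prove. The statement is the integration by parts formula \eqref{Int-Part}: for $u\in W^{s,2}(\RR^N)$ with $(-\Delta)^su\in L^2(\Omega)$ and any $v\in W^{s,2}(\RR^N)$, the symmetrized double integral over $\RR^{2N}\setminus(\Omc)^2$ equals $\int_{\Omega}v(-\Delta)^su\,dx+\int_{\Omc}v\,\mathcal N_su\,dx$. What you have written is instead a strategy for the approximate controllability theorem (Theorem \ref{main-Theo}), which merely \emph{uses} \eqref{Int-Part} as a tool. Nothing in your text engages with the identity itself, so as a proof of the proposition there is a complete gap: no splitting of the integration domain, no identification of the fractional Laplacian and of the nonlocal normal derivative $\mathcal N_su$, and no justification of convergence of the singular integrals under the stated hypotheses. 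For comparison, the paper's own proof is short: the identity is quoted from Lemma 3.3 of \cite{DRV} for smooth functions and then extended by a density argument. A self-contained argument would decompose $\RR^{2N}\setminus(\Omc)^2=(\Omega\times\Omega)\cup(\Omega\times(\Omc))\cup((\Omc)\times\Omega)$, use the symmetry of the kernel and Fubini's theorem to rewrite the left-hand side as $\int_{\RR^N}v(x)\,C_{N,s}\,\mathrm{P.V.}\!\int$ of the appropriate difference quotients, and recognize the inner integrals as $(-\Delta)^su$ on $\Omega$ and $\mathcal N_su$ on $\Omc$, with the passage from smooth $u,v$ to the stated class justified by density in $W^{s,2}(\RR^N)$ and the assumptions $(-\Delta)^su\in L^2(\Omega)$, $\mathcal N_su\in L^2(\Omc)$.

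As a side remark, even if your text were being judged as a proof of Theorem \ref{main-Theo}, it diverges from the paper at the very step you flag as the main obstacle: the paper's duality identity \eqref{32} carries the factor $\bigl(g+\delta g_t\bigr)\mathcal N_s\psi$ (the time derivative is moved onto the compactly supported control $g$), so one never needs to pass from $\mathcal N_s(\psi-\delta\psi_t)=0$ to $\mathcal N_s\psi=0$; the vanishing of the integral for all $g\in\mathcal D(\mathcal O\times(0,T))$ yields $\mathcal N_s\psi=0$ on $\mathcal O\times(0,T)$ directly, after which Theorem \ref{pro-uni-con} applies. But this is moot for the present task: you must supply an argument for \eqref{Int-Part}, which your proposal does not contain.
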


\begin{remark}
{\em We mention the following facts.
\begin{enumerate}
\item Firstly, we notice that 
\begin{align*}
\RR^{2N}\setminus(\RR^N\setminus\Omega)^2=(\Omega\times\Omega)\cup (\Omega\times(\RR^N\setminus\Omega))\cup((\RR^N\setminus\Omega)\times\Omega).
\end{align*}

\item Secondly, if $u=0$ in $\Omc$ or $v=0$ in $\Omc$, then
\begin{align*}
\int\int_{\RR^{2N}\setminus(\RR^N\setminus\Omega)^2}\frac{(u(x)-u(y))(v(x)-v(y))}{|x-y|^{N+2s}}dxdy=\int_{\R^N}\int_{\R^N}\frac{(u(x)-u(y))(v(x)-v(y))}{|x-y|^{N+2s}}dxdy,
\end{align*}
so that the identity \eqref{Int-Part} becomes
\begin{align}\label{Int-Part-2}
\frac{C_{N,s}}{2}\int_{\R^N}\int_{\R^N}\frac{(u(x)-u(y))(v(x)-v(y))}{|x-y|^{N+2s}}\;dxdy
=\int_{\Omega}v(-\Delta)^su\;dx+\int_{\Omc}v\mathcal N_su\;dx.
\end{align}
\end{enumerate}
}
\end{remark}

We conclude this section with the following convergence result.

\begin{lemma}\label{lem-37}
Let $u\in W_0^{1,2}(\Omega)\hookrightarrow W_0^{s,2}(\bOm)$ be such that $(-\Delta)^su, \Delta u\in L^2(\Omega)$. Then the following assertions hold.
\begin{enumerate}
\item For every $v\in W_0^{1,2}(\Omega)$, 
\begin{align}\label{lim1}
\lim_{s\uparrow 1^-}\int_{\Omega}v(-\Delta)^su\;dx=-\int_{\Omega}v\Delta u\;dx.
\end{align}
\item For every $v\in W^{1,2}(\RR^N)$, 

\begin{align}\label{lim2}
\lim_{s\uparrow 1^-}\int_{\Omc}v\mathcal N_su\;dx=\int_{\pOm}v\partial_{\nu}u\;d\sigma,
\end{align}
where $\partial_{\nu}u$ is the normal derivative of $u$ in direction of the outer normal vector $\vec\nu$.
\end{enumerate}
\end{lemma}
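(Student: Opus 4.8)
The plan is to derive both statements from the nonlocal integration by parts formula \eqref{Int-Part} (in the simplified form \eqref{Int-Part-2}), the Bourgain--Brezis--Mironescu asymptotics for the Gagliardo seminorm, and the Green formula for $-\Delta$ on $\Omega$.

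For \eqref{lim1}: since $v\in W_0^{1,2}(\Omega)\hookrightarrow W_0^{s,2}(\bOm)$ we have $v=0$ in $\Omc$ (and $u$, extended by zero, also vanishes there), so \eqref{Int-Part-2} applies and the nonlocal normal derivative term disappears, yielding
\[
\int_\Omega v(-\Delta)^s u\,dx=\frac{C_{N,s}}{2}\int_{\RR^N}\int_{\RR^N}\frac{(u(x)-u(y))(v(x)-v(y))}{|x-y|^{N+2s}}\,dx\,dy .
\]
The normalization $C_{N,s}$ in \eqref{CNs} is exactly the one for which the (polarized) Bourgain--Brezis--Mironescu theorem gives $\lim_{s\uparrow1^-}$ of the right-hand side equal to $\int_{\RR^N}\nabla u\cdot\nabla v\,dx$; since $u$ and $v$ vanish outside $\Omega$ this is $\int_\Omega\nabla u\cdot\nabla v\,dx$, and, because $\Delta u\in L^2(\Omega)$ and $v\in W_0^{1,2}(\Omega)$, Green's formula rewrites it as $-\int_\Omega v\Delta u\,dx$.

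For \eqref{lim2}: take $v\in W^{1,2}(\RR^N)\hookrightarrow W^{s,2}(\RR^N)$. Since $u=0$ in $\Omc$, \eqref{Int-Part-2} again reduces the left-hand side to a full-space double integral, so
\[
\int_{\Omc}v\,\mathcal N_s u\,dx=\frac{C_{N,s}}{2}\int_{\RR^N}\int_{\RR^N}\frac{(u(x)-u(y))(v(x)-v(y))}{|x-y|^{N+2s}}\,dx\,dy-\int_\Omega v(-\Delta)^s u\,dx .
\]
As before the double integral tends to $\int_{\RR^N}\nabla u\cdot\nabla v\,dx=\int_\Omega\nabla u\cdot\nabla v\,dx$, so the proof will be finished once we show that $\int_\Omega v(-\Delta)^s u\,dx\to-\int_\Omega v\Delta u\,dx$ as $s\uparrow1^-$: indeed the generalized Green formula $\int_\Omega\nabla u\cdot\nabla v\,dx+\int_\Omega v\Delta u\,dx=\int_{\pOm}v\partial_\nu u\,d\sigma$ (licit since $\Delta u\in L^2(\Omega)$, $u\in W_0^{1,2}(\Omega)$ and $v\in W^{1,2}(\RR^N)$, with $\partial_\nu u$ and the boundary integral read as an $H^{\mp1/2}(\pOm)$ duality when necessary) then gives \eqref{lim2}.

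The crux, and the step I expect to be the main obstacle, is precisely this last convergence for a test function $v$ that does \emph{not} vanish on $\pOm$: the argument used for \eqref{lim1} does not carry over, because there the vanishing of $v$ in $\Omc$ was exactly what eliminated the $\mathcal N_s u$ term. I would handle it as follows. Testing the identity in \eqref{lim1} against $\varphi\in C_c^\infty(\Omega)$ shows $(-\Delta)^s u\to-\Delta u$ in $\mathcal D'(\Omega)$. The hypothesis that $(-\Delta)^su\in L^2(\Omega)$ for $s$ near $1$ forbids any boundary-layer blow-up of $(-\Delta)^s u$ and should yield a bound $\sup_{s}\|(-\Delta)^s u\|_{L^2(\Omega)}<\infty$; together with the distributional convergence this upgrades to weak convergence $(-\Delta)^s u\rightharpoonup-\Delta u$ in $L^2(\Omega)$, which is more than enough to pass to the limit against $v|_\Omega\in L^2(\Omega)$. (An alternative, avoiding the integration by parts entirely, is to estimate $\int_{\Omc}v\,\mathcal N_su\,dx=-C_{N,s}\int_{\Omc}\int_\Omega\frac{v(x)u(y)}{|x-y|^{N+2s}}\,dy\,dx$ directly: away from a thin tubular neighbourhood of $\pOm$ the decay $C_{N,s}\sim c_N(1-s)$ forces the contribution to $0$, while inside the tube one linearizes $u$ in the normal direction, uses tubular coordinates, and computes the resulting radial kernel integral to extract $\int_{\pOm}v\partial_\nu u\,d\sigma$.) In either approach the delicate point is the interplay, near $\pOm$, between the vanishing normalization $C_{N,s}$ and the singular kernel $|x-y|^{-N-2s}$.
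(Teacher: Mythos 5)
The paper does not actually prove this lemma: it simply cites \cite{BPS} (Proposition 2.2) for part (a) and \cite{DRV} (Proposition 5.1) for part (b). Your argument for \eqref{lim1} is correct and is essentially the standard proof behind the first citation: with $u$ and $v$ extended by zero, \eqref{Int-Part-2} removes the $\mathcal N_s$ term, the constant $C_{N,s}$ in \eqref{CNs} is exactly the Bourgain--Brezis--Mironescu normalization for which the polarized Gagliardo form tends to $\int_{\Omega}\nabla u\cdot\nabla v\,dx$, and Green's formula concludes.

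For \eqref{lim2}, however, there is a genuine gap at precisely the step you flag. The bound $\sup_{s}\|(-\Delta)^su\|_{L^2(\Omega)}<\infty$ does not follow from the hypothesis, which is pointwise in $s$, and the heuristic that the hypothesis ``forbids any boundary-layer blow-up'' is misleading: for the zero extension of a $u\in W_0^{1,2}(\Omega)$ with $\partial_\nu u\not\equiv 0$, a half-space model (say $N=1$, $\Omega=(0,\infty)$, $u(x)=x$ near $0$) shows that near $\pOm$ the function $(-\Delta)^su$ behaves like a nonzero multiple of $(1-s)\,d(x)^{1-2s}$, so its $L^2(\Omega)$-norm is not uniformly bounded as $s\uparrow 1$ (indeed membership in $L^2(\Omega)$ already fails for $s\ge 3/4$ in this model, so the lemma's hypothesis, read for $s$ near $1$, silently restricts the class of admissible $u$). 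Moreover, although this layer tends to $0$ pointwise in the interior, when integrated against a $v\in W^{1,2}(\RR^N)$ that does not vanish on $\pOm$ it contributes a boundary term of order one in the limit; in other words, the convergence $\int_{\Omega}v(-\Delta)^su\,dx\to-\int_{\Omega}v\Delta u\,dx$ for such $v$ is exactly where the surface integral in \eqref{lim2} is generated, and asserting it via an unproved uniform bound begs the question. The same caution applies to your alternative tubular-neighbourhood sketch: the computation in \cite{DRV} is carried out for $u$ of class $C^2$ across $\pOm$ (not vanishing outside $\Omega$), whereas here $u$ is the zero extension with a gradient jump, and in the model above the interior and exterior layers each carry a portion of the flux, so the radial-kernel computation must be redone for the actual extension rather than quoted. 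So: part (a) stands, but part (b) identifies the right obstacle without overcoming it, while the paper itself bypasses the issue by citation.
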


We refer to \cite[Proposition 2.2]{BPS} for Part (a) and \cite[Proposition 5.1]{DRV} for Part (b).

\section{Series representation of solutions}\label{sec-4}

In this section we give a representation in terms of series of weak solutions to the system \eqref{main-EQ-2} and the dual system \eqref{ACP-Dual}. Evolution equations with non-homogeneous boundary or exterior conditions are in general not so easy to solve since one cannot apply directly semigroup methods due the fact that the associated operator is in general not a generator of a semigroup. For this reason, we shall give more details in the proofs.
The representation of solutions in terms of series shall play an important role in the proof of our main results. 

We recall that $(\varphi_n)_{n\in\NN}$ denotes the orthornormal basis of eigenfunctions of the operator $(-\Delta)_D^s$ associated with the eigenvalues $(\lambda_n)_{n\in\NN}$. 

Let $\delta\ge 0$ and set
\begin{align}
{\bf D_n^\delta}:=\delta^2\lambda_n^2-4\lambda_n.
\end{align}
We have the following two situations.
\begin{itemize}
\item If $\delta>0$, since $0<\lambda_1\le\lambda_2\le\cdots\le\lambda_n\le\cdots$ and $\lim_{n\to\infty}\lambda_n=+\infty$, it follows that there is a number $N_0\in\NN_0:=\NN\cup\{0\}$ such that $\delta^2\lambda_n<4$ for all $n\le N_0$.
In that case we shall use the following notations.
\begin{enumerate}
\item If ${\bf D_n^\delta}\ge 0$, that is, if $\delta^2\lambda_n-4\ge 0$, then we let
\begin{align}\label{lam}
\lambda_n^{\pm}:=\frac{-\delta\lambda_{n}\pm\sqrt{\bf D_n^\delta}}{2}.
\end{align}

\item if ${\bf D_n^\delta}<0$, that is, if $\delta^2\lambda_n-4<0$, then we let 
\begin{align}\label{lam1}
\widetilde{\lambda}_n^{\pm}:=\frac{-\delta\lambda_{n}\pm i\sqrt{-\bf D_n^\delta}}{2},\quad \alpha_n:=\mbox{Re}(\widetilde{\lambda}_n^+)=\frac{-\delta\lambda_{n}}{2}\;\mbox{ and }\;\beta_n=\mbox{Im}(\widetilde{\lambda}_n^+)=\frac{\sqrt{-\bf D_n^\delta}}{2}.
\end{align}
\end{enumerate}
\item If $\delta=0$, then ${\bf D_n^0}:=-4\lambda_n<0$ for all $n\in\NN$. In that case we let
\begin{align}\label{lam2}
\widetilde{\lambda}_n^{\pm}:=\pm i\sqrt{\lambda_n},\;\; \alpha_n=0\;\mbox{ and }\;\beta_n=\sqrt{\lambda_n}.
\end{align}
\end{itemize}

\begin{remark}\label{rem-41}
{\em An immediate and important consequence is the following. If ${\bf D_n^\delta}\ge 0$, then we have that $\lambda_n^{\pm}< 0$ for all $n>N_0$, and 
\begin{align}\label{convergence}
\lambda_n^+\to -\delta, \quad \lambda_n^-\to-\infty,\,\;\text{ as }\,\, n\to\infty.
\end{align}
This fact will be used in the proof of our main results.} 
\end{remark}

\subsection{Series solutions of the system \eqref{main-EQ-2}}

 Recall that we have shown in Section \ref{sec-2} that a solution $(u,u_t)$ of \eqref{SD-WE} can be written as $u=v+w$ where $(v,v_t)$ solves \eqref{main-EQ-2} and $(w,w_t)$ is a solution of \eqref{main-EQ-3}. Let $\delta\ge 0$ and consider the system \eqref{main-EQ-3}. That is,
 
\begin{equation}\label{main-EQ-3-2}
\begin{cases}
w_{tt}+(-\Delta)^sw+\delta(-\Delta)^sw_t=0\;\;&\mbox{ in }\; \Omega\times (0,T),\\
w=0&\mbox{ in }\;(\Omc)\times (0,T),\\
w(\cdot,0)=u_0,\;\;w_t(\cdot,0)=u_1&\mbox{ in }\;\Omega.
\end{cases}
\end{equation} 
Let 
\begin{equation*}
W=\left(
\begin{array}{c}
w\\w_t \\ 
\end{array}
\right)\;\;\mbox{ and }\;   W_0=\left(
\begin{array}{c}
u_0\\u_1 \\ 
\end{array}
\right).
\end{equation*}
Then \eqref{main-EQ-3-2} can be rewritten as the following first order Cauchy problem:

\begin{equation}\label{equa27}
\begin{cases}
W_{t}+\mathcal A_\delta W=0\;\;&\mbox{ in }\; \Omega\times (0,T),\\
W(\cdot,0)=W_0&\mbox{ in }\;\Omega,
\end{cases}
\end{equation}
where the operator matrix $\mathcal A_\delta$ with domain $D(\mathcal A_\delta)=D((-\Delta)_D^s)\times D((-\Delta)_D^s)$ is given by 

\begin{equation}\label{eq-42}
\mathcal A_\delta:=\left(
\begin{array}{cc}
0&-I\\
(-\Delta)_D^s&\delta(-\Delta)_D^s
\end{array}
\right).
\end{equation}
Let $\mathcal H:=W_0^{s,2}(\bOm)\times L^2(\Omega)$ be the Hilbert space equipped with the scalar product

\begin{align*}
\langle (v_1,v_2),(w_1,w_2)\rangle_{\mathcal H}:=&\int_{\Om}v_1w_1\;dx+\int_{\Omega}v_2w_2\;dx\\
&+\int_{\RR^N}\int_{\R^N}\frac{(v_1(x)-v_1(y))(w_1(x)-w_1(y))}{|x-y|^{N+2s}}\;dxdy.
\end{align*}

The following result is classical in an abstract form. We include the proof for the sake of completeness. 

\begin{lemma}\label{lem-SG}
The operator $-\mathcal A_\delta$ generates a strongly continuous semigroup on $\mathcal H$.
\end{lemma}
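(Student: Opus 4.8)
The plan is to verify the hypotheses of the Lumer--Phillips theorem (or a suitable dissipativity criterion) for the operator $-\mathcal A_\delta$ on the Hilbert space $\mathcal H = W_0^{s,2}(\bOm)\times L^2(\Omega)$. The natural scalar product to work with is the one written just above the statement, in which the $W_0^{s,2}(\bOm)$ component is measured by the form $\mathcal F$ (up to the constant $C_{N,s}/2$) plus an $L^2(\Omega)$ term; by \eqref{norm-V} this is an equivalent norm on $W_0^{s,2}(\bOm)$. First I would record that $D(\mathcal A_\delta) = D((-\Delta)_D^s)\times D((-\Delta)_D^s)$ is dense in $\mathcal H$, since $D((-\Delta)_D^s)$ contains the eigenfunctions $(\varphi_n)_{n\in\NN}$ and is therefore dense in $W_0^{s,2}(\bOm)$ and in $L^2(\Omega)$.

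Next I would compute the dissipativity bracket. For $W = (w_1,w_2)\in D(\mathcal A_\delta)$ one has $\mathcal A_\delta W = (-w_2,\,(-\Delta)_D^s w_1 + \delta(-\Delta)_D^s w_2)$, so
\[
\langle \mathcal A_\delta W, W\rangle_{\mathcal H}
= -\int_\Omega w_2 w_1\,dx
- \frac{C_{N,s}}{2}\int_{\RR^N}\!\int_{\RR^N}\frac{(w_2(x)-w_2(y))(w_1(x)-w_1(y))}{|x-y|^{N+2s}}\,dxdy
\]
\[
\qquad{}+ \int_\Omega \bigl((-\Delta)_D^s w_1 + \delta(-\Delta)_D^s w_2\bigr) w_2\,dx
+ \frac{C_{N,s}}{2}\int_{\RR^N}\!\int_{\RR^N}\frac{\big(w_1(x)-w_1(y)+\delta(w_2(x)-w_2(y))\big)(w_2(x)-w_2(y))}{|x-y|^{N+2s}}\,dxdy,
\]
where in the last term I used that the $L^2(\Omega)$-component of $\mathcal A_\delta W$ equals $(-\Delta)_D^s(w_1+\delta w_2)$ together with the definition of $(-\Delta)_D^s$ via $\mathcal F$ applied against $w_2\in W_0^{s,2}(\bOm)$. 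Using the definition of $(-\Delta)_D^s$ through $\mathcal F$, the term $\int_\Omega (-\Delta)_D^s w_1\, w_2\,dx$ equals $\mathcal F(w_1,w_2) = \tfrac{C_{N,s}}{2}\int\int \frac{(w_1(x)-w_1(y))(w_2(x)-w_2(y))}{|x-y|^{N+2s}}\,dxdy$, which cancels against the second integral (the cross term coming from $(-\Delta)_D^s w_2$ in the last line also cancels the corresponding fractional cross term), and the $-\int_\Omega w_1 w_2$ and $+\int_\Omega (-\Delta)_D^s w_1\, w_2$ contributions combine with their counterparts so that everything collapses to
\[
\langle \mathcal A_\delta W, W\rangle_{\mathcal H}
= \delta\,\frac{C_{N,s}}{2}\int_{\RR^N}\!\int_{\RR^N}\frac{|w_2(x)-w_2(y)|^2}{|x-y|^{N+2s}}\,dxdy \;\ge\; 0,
\]
which is exactly the statement that $-\mathcal A_\delta$ is dissipative on $\mathcal H$ (when $\delta\ge 0$). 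The careful bookkeeping of which cross terms cancel is the first place where I would be most cautious, but it is routine once one consistently rewrites $(-\Delta)_D^s(\cdot)$ tested against $w_2$ as $\mathcal F(\cdot,w_2)$.

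The remaining hypothesis is range/surjectivity: I would show that $I + \mathcal A_\delta$ (equivalently $\lambda I + \mathcal A_\delta$ for some $\lambda>0$) maps $D(\mathcal A_\delta)$ onto $\mathcal H$. Given $(f_1,f_2)\in \mathcal H$, solving $(I+\mathcal A_\delta)(w_1,w_2) = (f_1,f_2)$ means $w_1 - w_2 = f_1$ and $w_2 + (-\Delta)_D^s w_1 + \delta(-\Delta)_D^s w_2 = f_2$; eliminating $w_2 = w_1 - f_1$ gives the elliptic equation $(1+\delta)(-\Delta)_D^s w_1 + w_1 = f_2 + f_1 + \delta(-\Delta)_D^s f_1$ — but since $f_1$ need only lie in $W_0^{s,2}(\bOm)$ it is cleaner to project onto the eigenbasis: writing $w_1 = \sum a_n\varphi_n$, $f_1 = \sum b_n\varphi_n$, $f_2 = \sum c_n\varphi_n$ one gets $a_n\big((1+\delta)\lambda_n + 1\big) = c_n + (1+\delta\lambda_n) b_n$, which is uniquely and boundedly solvable, and one checks using the norm formulas \eqref{norm-V}, \eqref{norm-V-2} that the resulting $w_1\in D((-\Delta)_D^s)$ and $w_2 = w_1 - f_1\in D((-\Delta)_D^s)$, so $(w_1,w_2)\in D(\mathcal A_\delta)$. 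Then Lumer--Phillips yields that $-\mathcal A_\delta$ generates a contraction $C_0$-semigroup on $\mathcal H$; since a contraction semigroup is in particular strongly continuous, the claim follows. The main obstacle, modest as it is, is organizing the algebra in the dissipativity computation so that all the non-sign-definite terms cancel and only the $\delta$-weighted Gagliardo seminorm of $w_2$ survives; the surjectivity step is then a straightforward diagonalization in the eigenbasis.
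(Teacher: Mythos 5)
The dissipativity step contains a genuine error. With the scalar product you chose (the one displayed before the lemma, which pairs the \emph{first} components both in $L^2(\Om)$ and through the Gagliardo form, and the second components only in $L^2(\Om)$), the fractional cross terms do cancel, but the $L^2$ cross term does not: a careful bookkeeping gives
\[
\langle \mathcal A_\delta W, W\rangle_{\mathcal H} \;=\; \delta\,\mathcal F(w_2,w_2)\;-\;(w_1,w_2)_{L^2(\Om)},
\]
not $\delta\,\mathcal F(w_2,w_2)$. There is no counterpart for $-(w_1,w_2)_{L^2(\Om)}$: the term $\int_\Om (-\Delta)_D^s w_1\, w_2\,dx=\mathcal F(w_1,w_2)$ is already used up cancelling the Gagliardo cross term produced by the first component $-w_2$, and the second components are not paired through the Gagliardo form at all (your displayed bracket in fact double counts by adding such a pairing). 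Since the surviving term is bilinear in $(w_1,w_2)$ while the good term involves only $w_2$, taking for instance $w_1=c\varphi_1$, $w_2=\varphi_1$ with $c$ large gives $\langle -\mathcal A_\delta W, W\rangle_{\mathcal H}>0$; so $-\mathcal A_\delta$ is \emph{not} dissipative in this inner product, and Lumer--Phillips cannot be invoked as you state it.

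The gap is exactly what the paper's proof is designed to avoid, and it is easily repaired. The paper applies Lumer--Phillips to the shifted operator $\mathcal B_\delta=-\mathcal A_\delta-I$: the extra $-\|W\|_{\mathcal H}^2$ absorbs the cross term via $|(w_1,w_2)_{L^2(\Om)}|\le \tfrac12(\|w_1\|_{L^2(\Om)}^2+\|w_2\|_{L^2(\Om)}^2)$, and the semigroup generated by $-\mathcal A_\delta$ is recovered as $e^{t}e^{t\mathcal B_\delta}$. Alternatively, you could keep your computation verbatim but replace the inner product by the equivalent one $\mathcal F(v_1,w_1)+(v_2,w_2)_{L^2(\Om)}$ (equivalent because $\lambda_1>0$, see \eqref{norm-V}); then indeed $\langle \mathcal A_\delta W,W\rangle=\delta\,\mathcal F(w_2,w_2)\ge 0$. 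Your surjectivity step by diagonalization is the right idea (and cleaner than the paper's explicit inversion formulas), but its final check is not correct as stated: for $\delta>0$ one finds $\lambda_n a_n\sim\frac{c_n+\delta\lambda_n b_n}{1+\delta}$, so $w_1$ and $w_2$ need not belong to $D((-\Delta)_D^s)$ when $f_1\in W_0^{s,2}(\bOm)\setminus D((-\Delta)_D^s)$; only the combination $w_1+\delta w_2$ does. This domain subtlety (the natural domain for $\delta>0$ consists of pairs with $w_2\in W_0^{s,2}(\bOm)$ and $w_1+\delta w_2\in D((-\Delta)_D^s)$) is admittedly also glossed over in the paper's Step 4, but your explicit claim that $w_1,w_2\in D((-\Delta)_D^s)$ would fail as written.
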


\begin{proof}
We prove the lemma in several steps. We shall apply the Lumer-Phillips theorem to the operator $\mathcal B_\delta=-\mathcal A_\delta-I$.\\

{\bf Step 1}: We claim that $\mathcal B_\delta$ is a closed operator. Indeed, assume that $U_n\in D(\mathcal B_\delta)$ satisfies $U_n\to U$ in $\mathcal H$ and $\mathcal B_\delta U_n\to V$ in $\mathcal H$, as $n\to\infty$, where
\begin{equation*}
U_n:=\left(
\begin{array}{c}
u_1^n\\u_2^n \\ 
\end{array}
\right),\;\;\;\;   U:=\left(
\begin{array}{c}
u_1\\u_2 \\ 
\end{array}
\right)\;\mbox{ and }\; V:=\left(
\begin{array}{c}
v_1\\v_2 \\ 
\end{array}
\right).
\end{equation*}
This means that $u_1^n\to u_1$ in $W_0^{s,2}(\bOm)$, $u_2^n\to u_2$ in $L^2(\Omega)$ and
\begin{equation*}
\begin{cases}
-u_1^n+u_2^n\to v_1\;\;&\mbox{ in }\; W_0^{s,2}(\bOm),\\
-u_2^n-(-\Delta)_D^s(u_1^n+\delta u_2^n)\to v_2\;&\mbox{ in }\; L^2(\Omega),
\end{cases}
\end{equation*}
as $n\to\infty$. Thus $u_1^n+u_2^n\to u_1+u_2$ and $(-\Delta)_D^s(u_1^n+\delta u_2^n)\to -(u_2+v_2)$, in $L^2(\Omega)$, as $n\to\infty$. Recall that $(-\Delta)_D^s$ is a closed operator in $L^2(\Omega)$. So, $u_1+\delta u_2\in D((-\Delta)_D^s)$ and $(-\Delta)_D^s(u_1+\delta u_2)=-(u_2+v_2)$. We have shown that $U\in D(\mathcal B_\delta)=D(\mathcal A_\delta)$ and $\mathcal B_\delta(U)=V$. Hence, the operator $\mathcal B_\delta$ is closed.\\

{\bf Step 2}: We show that $\mathcal B_\delta$ is a dissipative operator, that is, 
\begin{align}\label{dissip}
\lambda\|U\|_{\mathcal H}\le \|\mathcal B_\delta U-\lambda U\|_{\mathcal H}\;\mbox{ for any }\; \lambda>0\;\mbox{ and }\; U\in D(\mathcal B_\delta).
\end{align}
Let $\lambda>0$ and $U\in D(\mathcal B_\delta)$. Then
\begin{align}\label{e-L}
\langle \mathcal B_\delta U, U\rangle_{\mathcal H}=&-(u_1,u_1)_{L^2(\Omega)}+(u_2,u_1)_{L^2(\Omega)}\notag\\
&-((-\Delta)_D^{s/2}u_1,(-\Delta)_D^{s/2}u_1)_{L^2(\Omega)}+ ((-\Delta)_D^{s/2}u_2,(-\Delta)_D^{s/2}u_1)_{L^2(\Omega)}\notag\\
&-((-\Delta)_D^{s}u_1,u_2)_{L^2(\Omega)}-\delta((-\Delta)_D^{s}u_2,u_2)_{L^2(\Omega)}-(u_2,u_2)_{L^2(\Omega)}.
\end{align}
Since $|(u,v)_{L^2(\Omega)}|\le \frac 12\left(\|u\|_{L^2(\Omega)}^2+\|v\|_{L^2(\Omega)}^2\right)$, it follows from \eqref{e-L} that $\langle \mathcal B_\delta U, U\rangle_{\mathcal H}\le 0$. Thus
\begin{align*}
\langle\mathcal B_\delta U-\lambda U,U\rangle_{\mathcal H}=\langle\mathcal B_\delta U,U\rangle_{\mathcal H}-\lambda\|U\|_{\mathcal H}^2\le -\lambda\|U\|_{\mathcal H}^2,\;\;U\in D(\mathcal B_\delta).
\end{align*}
This implies that $\lambda\|U\|_{\mathcal H}^2\le |\langle\mathcal B_\delta U-\lambda U,U\rangle_{\mathcal H}|\le \|\mathcal B_\delta U-\lambda U\|_{\mathcal H}\|U\|_{\mathcal H}$ and we have shown \eqref{dissip}.\\

{\bf Step 3}: Since $D((-\Delta)_D^s)$ is dense in $L^2(\Omega)$ and in $W_0^{s,2}(\bOm)$, then $D(\mathcal B_\delta)$ is dense in $\mathcal H$.\\

{\bf Step 4}: We show that $\mathcal R(I-\mathcal B_\delta)=\mathcal H$. We have to solve the equation $(I-\mathcal B_\delta)U=V$ for every $V\in\mathcal H$.	 That is,
\begin{align*}
2u_1-u_2&=v_1\\
2u_2+(-\Delta)_D^s(u_1+\delta u_2)&=v_2.
\end{align*}
Since $(-\Delta)_D^s$ is a non-negative selfadjoint operator in $L^2(\Omega)$, a simple calculation gives
\begin{align*}
u_1=&\Big((-\Delta)_D^s+2\delta I\Big)\Big(4I+(1+2\delta)(-\Delta)_D^s\Big)^{-1}v_1+\Big(4I+(1+2\delta)(-\Delta)_D^s\Big)^{-1}v_2\\
u_2=&\Big(4(\delta-1)I+(1-2\delta)(-\Delta)_D^s\Big)\Big(4I+3(-\Delta)_D^s\Big)^{-1}v_1+2\Big(4I+(1+2\delta)(-\Delta)_D^s\Big)^{-1}v_2,
\end{align*}
and we have proved that  $\mathcal R(I-\mathcal B_\delta)=\mathcal H$.

Finally, it follows from the Lumer-Phillips theorem (see e.g. \cite[Theorem 3.4.5]{ABHN}), that the operator $\mathcal B_\delta$ generates a strongly continuous semigroup $(e^{t\mathcal B_\delta})_{t\ge 0}$ on $\mathcal H$ and thus, the operator $-\mathcal A_\delta$ generates the strongly continuous semigroup $(e^{-t\mathcal A_\delta})_{t\ge 0}=(e^te^{t\mathcal B_\delta})_{t\ge 0}$ on $\mathcal H$. The proof is finished.
\end{proof}

\begin{remark}
{\em 
As a consequence of Lemma \ref{lem-SG}, it follows from semigroup theory (see e.g. \cite[Chapter 3]{ABHN}) that, for every $(u_0,u_1)\in W_0^{s,2}(\bOm)\times L^2(\Omega)$, the first order Cauchy problem \eqref{equa27} has a unique strong solution $W\in C([0,T];\mathcal H)$. Hence, the system \eqref{main-EQ-3-2}, has a unique (mild) solution $(w,w_t)$ satisfying
\begin{align}\label{regulari}
w\in C([0,T];W_0^{s,2}(\bOm))\cap C^1([0,T];L^2(\Omega)).
\end{align}
}
\end{remark}

Next we give the representation of solutions in terms of series.

\begin{proposition}\label{pro-sol-SG}
Let  $(u_0, u_1)\in W_0^{s,2}(\bOm)\times L^2(\Omega)$. Then the solution $(w,w_t)$ of \eqref{main-EQ-3-2} is given by
 \begin{align}\label{eq-9}
w(x,t)=\sum_{n=1}^{\infty}\Big(A_{n}(t)(u_{0},\varphi_{n})_{L^2(\Om)}+B_{n}(t)(u_{1},\varphi_n)_{L^2(\Om)}\Big)\varphi_{n}(x),
\end{align}
where
\begin{align}\label{10}
A_{n}(t):=
\begin{cases}
\displaystyle \Big(\cos(\beta_n t)-\frac{\alpha_n}{\beta_n}\sin(\beta_n t)\Big)e^{\alpha_n t} & \mbox{ if }  n\leq N_0,\\
\displaystyle\frac{\lambda_n^-e^{\lambda_n^+ t}-\lambda_n^+e^{\lambda_n^-t}}{\lambda_n^--\lambda_n^+} & \mbox{ if }  n>N_0,
\end{cases}
\end{align}
and
\begin{align}\label{11}
B_{n}(t):=
\begin{cases}
\displaystyle\frac{\sin(\beta_n t)}{\beta_n}e^{\alpha_n t} & \mbox{ if }  n\leq N_0,\\
\displaystyle\frac{e^{\lambda_n^- t}-e^{\lambda_n^+t}}{\lambda_n^--\lambda_n^+} & \mbox{ if }  n>N_0.
\end{cases}
\end{align}
Here, $\lambda_n^{\pm}$, $\alpha_n$ and $\beta_n$ are the real numbers given in \eqref{lam}, \eqref{lam1} and \eqref{lam2}.
\end{proposition}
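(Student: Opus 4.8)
The plan is to reduce \eqref{main-EQ-3-2} to a one-parameter family of scalar linear ordinary differential equations — one for each eigenmode of $(-\Delta)_D^s$ — to solve those explicitly, and to reassemble the series, the passage from smooth data to general data being handled by a density argument. By Lemma~\ref{lem-SG} and standard semigroup theory the solution $(w,w_t)$ of \eqref{main-EQ-3-2} is unique and equals $W(t)=(w(\cdot,t),w_t(\cdot,t))=e^{-t\mathcal A_\delta}W_0$ with $W_0=(u_0,u_1)$, so it is enough to identify this solution. First I would take $W_0\in D(\mathcal A_\delta)=D((-\Delta)_D^s)\times D((-\Delta)_D^s)$; then $W\in C^1([0,T];\mathcal H)\cap C([0,T];D(\mathcal A_\delta))$, so $w\in C^2([0,T];L^2(\Omega))$ and $w_{tt}=-(-\Delta)_D^s w-\delta(-\Delta)_D^s w_t$ in $L^2(\Omega)$. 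Putting $w_n(t):=(w(\cdot,t),\varphi_n)_{L^2(\Om)}$ and using the selfadjointness of $(-\Delta)_D^s$ together with \eqref{ei-val-pro}, each $w_n$ is $C^2$ and solves
\begin{equation*}
w_n''(t)+\delta\lambda_n w_n'(t)+\lambda_n w_n(t)=0,\qquad w_n(0)=(u_0,\varphi_n)_{L^2(\Om)},\quad w_n'(0)=(u_1,\varphi_n)_{L^2(\Om)}.
\end{equation*}

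Next I would solve this constant-coefficient equation. Its characteristic polynomial is $\mu^2+\delta\lambda_n\mu+\lambda_n$ with discriminant ${\bf D_n^\delta}$, so the trichotomy is exactly the one recorded just before Remark~\ref{rem-41}: if ${\bf D_n^\delta}<0$ (all $n$ when $\delta=0$; $n\le N_0$ when $\delta>0$) the roots are $\alpha_n\pm i\beta_n$ and the two fundamental solutions with Cauchy data $(1,0)$ and $(0,1)$ are precisely $A_n$ and $B_n$ of \eqref{10}--\eqref{11}; if ${\bf D_n^\delta}>0$ ($n>N_0$) the roots are the two distinct negative reals $\lambda_n^{\pm}$ of \eqref{lam}, and solving the $2\times2$ linear system for the coefficients of $e^{\lambda_n^+t}$ and $e^{\lambda_n^-t}$ again yields $A_n,B_n$ of \eqref{10}--\eqref{11} (the borderline ${\bf D_n^\delta}=0$, which can occur for at most finitely many $n$, is a double root handled by the limiting forms $A_n(t)=(1-\lambda_n^+t)e^{\lambda_n^+t}$, $B_n(t)=te^{\lambda_n^+t}$). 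Uniqueness for the scalar ODE then gives $w_n(t)=A_n(t)(u_0,\varphi_n)_{L^2(\Om)}+B_n(t)(u_1,\varphi_n)_{L^2(\Om)}$, and expanding $w(\cdot,t)$ in the basis $(\varphi_n)$ produces exactly \eqref{eq-9} for $W_0\in D(\mathcal A_\delta)$.

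To extend \eqref{eq-9} to arbitrary $(u_0,u_1)\in W_0^{s,2}(\bOm)\times L^2(\Omega)$ I would argue by density, which requires uniform control of the coefficient functions. From the elementary identities $\lambda_n^++\lambda_n^-=-\delta\lambda_n$, $\lambda_n^+\lambda_n^-=\lambda_n$ (and $2\alpha_n=-\delta\lambda_n$, $\alpha_n^2+\beta_n^2=\lambda_n$ in the oscillatory case) one checks $A_n'=-\lambda_nB_n$ and, invoking the asymptotics of Remark~\ref{rem-41} ($\lambda_n^+\to-\delta$, $\lambda_n^-\to-\infty$, and $\lambda_n^{\pm}<0$, hence $e^{\lambda_n^{\pm}t}\le1$ for $t\ge0$, once $n>N_0$), one gets a constant $C=C(T,\delta,\Omega)$ with
\begin{equation*}
|A_n(t)|\le C,\qquad \lambda_n^{1/2}|B_n(t)|\le C,\qquad |A_n'(t)|=\lambda_n|B_n(t)|\le C,\qquad |B_n'(t)|\le C
\end{equation*}
for all $t\in[0,T]$ and all $n$ (the finitely many indices $n\le N_0$ being trivially bounded on $[0,T]$). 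Together with $\sum_n\lambda_n|(u_0,\varphi_n)_{L^2(\Om)}|^2<\infty$ (this is the norm \eqref{norm-V}) and $\sum_n|(u_1,\varphi_n)_{L^2(\Om)}|^2=\|u_1\|_{L^2(\Omega)}^2<\infty$, these bounds show that the partial sums of \eqref{eq-9}, and of its termwise time derivative, are Cauchy in $C([0,T];W_0^{s,2}(\bOm))$ and in $C([0,T];L^2(\Omega))$ respectively; hence the right-hand side of \eqref{eq-9} defines a function with the regularity \eqref{regulari}, depending continuously on $(u_0,u_1)$, vanishing in $\Omc$ (each $\varphi_n$ does) and attaining the Cauchy data $(u_0,u_1)$ because $A_n(0)=1$, $A_n'(0)=0$, $B_n(0)=0$, $B_n'(0)=1$. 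Since this continuous map $(u_0,u_1)\mapsto$(the function) coincides with $(u_0,u_1)\mapsto e^{-t\mathcal A_\delta}W_0$ on the dense subset $D(\mathcal A_\delta)$ of $\mathcal H$, the two agree everywhere, which is \eqref{eq-9}. (Alternatively, one may simply define $w$ by the series, verify the above regularity and that it is a strong solution for smooth data, and conclude by uniqueness.)

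I expect the main obstacle to be exactly these uniform estimates on $A_n,B_n,A_n',B_n'$: one must see that a single constant $C$ works across the regime change at $n=N_0$ and, crucially, that the bounds do not degenerate as $n\to\infty$. This is where the splitting in Remark~\ref{rem-41} does the work — one characteristic root stays bounded (near $-\delta$) while the other escapes to $-\infty$, and both are negative for $n>N_0$ — since it is precisely this behaviour that makes $B_n=O(\lambda_n^{-1})$ and keeps $A_n$ bounded, so that the series lands in $W_0^{s,2}(\bOm)$ using only $u_0\in W_0^{s,2}(\bOm)$ and $u_1\in L^2(\Omega)$. A second, more routine, point is the rigorous justification in the smooth-data step that projecting the abstract Cauchy problem onto $\varphi_n$ gives the scalar ODE with the stated Cauchy data; this uses only $\varphi_n\in D((-\Delta)_D^s)$, selfadjointness, and the $C^2$-in-time regularity of the classical solution.
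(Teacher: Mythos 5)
Your proof is correct and follows essentially the same route as the paper: expand in the eigenbasis of $(-\Delta)_D^s$, reduce to the damped scalar ODE $w_n''+\delta\lambda_n w_n'+\lambda_n w_n=0$ for each mode, solve it in the two regimes $n\le N_0$ and $n>N_0$, and reassemble the series \eqref{eq-9}. Your extra care --- the density argument with uniform bounds on $A_n,B_n,A_n',B_n'$ and the degenerate double-root case ${\bf D_n^\delta}=0$ --- only supplies details the paper's proof explicitly leaves out (``It is straightforward to verify\dots we will not go into details''), so nothing is missing and nothing conflicts.
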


\begin{proof}
 Using the spectral theorem of selfadjoint operators, we can proceed as follows. We look for a solution $(w,w_t)$ of \eqref{main-EQ-3-2} in the form

\begin{align}\label{3}
w(x,t)=\sum_{n=1}^{\infty}\left( w(\cdot,t),\varphi_{n}\right)_{L^2(\Omega)} \varphi_{n}(x).
\end{align}
For the sake of simplicity we let $w_{n}(t)=\left( w(\cdot,t),\varphi_{n}\right)_{L^2(\Omega)}$. Replacing \eqref{3} in the first equation of \eqref{main-EQ-3-2}, then multiplying both sides with $\varphi_k$ and integrating over $\Omega$, we get that $w_n(t)$ satisfies the following ordinary differential equation:
\begin{align}\label{4}
w_{n}''(t)+\lambda_{n}w_{n}(t)+\delta\lambda_{n}w_{n}'(t)=0.
\end{align}
Solving \eqref{4}, calculating, letting $u_{0,n}=(u_0,\varphi_n)_{L^2(\Om)}$ and  $u_{1,n}=(u_1,\varphi_n)_{L^2(\Om)}$, we get 

\begin{align}\label{5}
w(x,t)=\sum_{n=1}^{N_0}\Big(a_{n}^1\cos(\beta_n t)+a_{n}^2\sin(\beta_n t)\Big)e^{\alpha_n t}\varphi_{n}(x)+\sum_{n=N_0+1}^{\infty}\Big(a_{n}^3e^{\lambda_n^+t}+a_{n}^4e^{\lambda_n^-t}\Big)\varphi_{n}(x),
\end{align}
where
\begin{align*}  
a_n^1=u_{0,n}, \quad a_n^2=\frac{u_{1,n}-\alpha_n u_{0,n}}{\beta_n}\;\mbox{ for }\; n\le N_0,
\end{align*}
and
\begin{align*} 
a_n^3=\frac{u_{0,n}\lambda_n^--u_{1,n}}{\lambda_n^--\lambda_n^+},\quad a_n^4=\frac{u_{1,n}-u_{0,n}\lambda_n^+}{\lambda_n^--\lambda_n^+} \;\mbox{ for }\; n>N_0.
\end{align*}
Therefore, we obtain the following expression of $w$:
\begin{align}\label{9}
w(x,t)=&\sum_{n=1}^{N_0}\left[u_{0,n}\left(\cos(\beta_n t)-\frac{\alpha}{\beta_n}\sin(\beta_n t)\right)+u_{1,n}\frac{\sin(\beta_n t)}{\beta}\right]e^{\alpha_n t}\varphi_{n}(x)\notag\\
&+\sum_{n=N_0+1}^{\infty}\left[u_{0,n}\left(\frac{\lambda_n^-e^{\lambda_n^+ t}-\lambda_n^+e^{\lambda_n^-t}}{\lambda_n^--\lambda_b^+}\right)+u_{1,n}\left(\frac{e^{\lambda_n^- t}-e^{\lambda_n^+t}}{\lambda_n^--\lambda_n^+}\right)\right]\varphi_{n}(x).
\end{align}
Let $A_n(t)$ and $B_n(t)$ be given in \eqref{10} and \eqref{11}, respectively. Then  \eqref{eq-9} follows from  \eqref{9}.  

A simple calculation gives
\begin{align*}
w(x,0)=&\sum_{n=1}^{\infty}\Big(A_{n}(0)u_{0,n}+B_{n}(0)u_{1,n}\Big)\varphi_{n}(x)=\sum_{n=1}^{\infty}u_{0,n}\varphi_{n}(x)=u_0(x),
\end{align*}
and 
\begin{align*}
w_t(x,0)=\sum_{n=1}^{\infty}\Big(A_{n}'(0)u_{0,n}+B_{n}'(0)u_{1,n}\Big)\varphi_{n}(x)=\sum_{n=1}^{\infty}u_{1,n}\varphi_{n}(x)=u_1(x).
\end{align*}
It is straightforward to verify that $w$ given in \eqref{eq-9} has the regularity \eqref{regulari}. Since we are not interested with solutions of \eqref{main-EQ-3-2}, we will not go into details. The proof is finished.
\end{proof}

Next, we consider the non-homogeneous system \eqref{main-EQ-2}, that is, 

\begin{align}\label{main-EQ-2-2}
\begin{cases}
v_{tt} + (-\Delta)^{s} v + \delta(-\Delta)^{s} v_{t}=0 &\mbox{ in }\; \Omega\times(0,T),\\
v=g & \mbox{ in }\; (\Omc)\times (0,T), \\
v(\cdot,0) = 0, v_t(\cdot,0) = 0  &\mbox{ in }\; \Omega.
\end{cases}
\end{align}
We have the following result.

\begin{theorem}\label{theo-44}
For every $g\in\mathcal D((\Omc)\times (0,T))$, the system \eqref{main-EQ-2-2} has a unique weak (classical solution) $(v,v_t)$ such that $v\in C^\infty([0,T];W^{s,2}(\RR^N))$ and is given by
\begin{align}\label{eq-22-1}
v(x,t)=\sum_{n=1}^{\infty}\left(\int_0^t \Big( g(\cdot,\tau),\mathcal{N}_{s}\varphi_{n} \Big)_{L^2(\Omc)}\frac{1}{\lambda_{n}}B_{n}''(t-\tau)d\tau\right)\varphi_{n}(x).
\end{align}
Moreover, there is a constant $C>0$ such that for all $t\in [0,T]$ and $m\in\NN_0$, 
\begin{align}\label{ESt-Ind}
\|\partial_t^mv(\cdot,t)\|_{W^{s,2}(\RR^N)}\le C\left(\|\partial_t^{m+2}g\|_{L^\infty((0,T);W^{s,2}(\Omc))}+\|\partial_t^mg(\cdot,t)\|_{W^{s,2}(\Omc)}\right).
\end{align}
\end{theorem}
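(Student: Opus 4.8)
The plan is to reduce the non-homogeneous exterior problem \eqref{main-EQ-2-2} to an inhomogeneous problem with zero exterior data by subtracting off the harmonic extension of the control, and then to solve the resulting problem by spectral expansion. First I would set $V_g(\cdot,t):=U_{g(\cdot,t)}$, the unique weak solution of the Dirichlet problem \eqref{EDP} with datum $g(\cdot,t)$ given by Proposition \ref{proposi-33}; since $g\in\mathcal D((\Omc)\times(0,T))$, the map $t\mapsto V_g(\cdot,t)$ inherits $C^\infty$ regularity in $t$ with values in $W^{s,2}(\RR^N)$, and $(-\Delta)^sV_g=0$ in $\Omega$. Then $z:=v-V_g$ should satisfy $z=0$ in $\Omc\times(0,T)$, $z(\cdot,0)=0$, $z_t(\cdot,0)=0$ (here one uses $g(\cdot,0)=0$, $g_t(\cdot,0)=0$ because $g$ is compactly supported in time), and the interior equation $z_{tt}+(-\Delta)^sz+\delta(-\Delta)^sz_t=-\partial_{tt}V_g-\delta(-\Delta)^s\partial_tV_g=-\partial_{tt}V_g$ in $\Omega\times(0,T)$, again because $(-\Delta)^s$ annihilates $V_g$ on $\Omega$.

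Next I would expand $z(\cdot,t)=\sum_n z_n(t)\varphi_n$ and test the interior equation against $\varphi_k$. Writing $z_n(t)=(z(\cdot,t),\varphi_n)_{L^2(\Omega)}$ and using $v=z+V_g$ together with the crucial identity \eqref{eqA9}, namely $\int_{\Omega}\varphi_nV_g\,dx=-\lambda_n^{-1}\int_{\Omc}g\,\mathcal N_s\varphi_n\,dx$, the inhomogeneous ODE becomes
\begin{align*}
z_n''(t)+\lambda_nz_n(t)+\delta\lambda_n z_n'(t)=-\frac{d^2}{dt^2}\left(V_g(\cdot,t),\varphi_n\right)_{L^2(\Omega)}=\frac{1}{\lambda_n}\frac{d^2}{dt^2}\left(g(\cdot,t),\mathcal N_s\varphi_n\right)_{L^2(\Omc)},
\end{align*}
with zero initial data. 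Duhamel's formula for this scalar equation has kernel exactly $B_n(t)$ from \eqref{11} (since $B_n$ solves $B_n''+\lambda_nB_n+\delta\lambda_nB_n'=0$ with $B_n(0)=0$, $B_n'(0)=1$), so $z_n(t)=\int_0^tB_n(t-\tau)\big[\tfrac{1}{\lambda_n}\tfrac{d^2}{d\tau^2}(g(\cdot,\tau),\mathcal N_s\varphi_n)_{L^2(\Omc)}\big]d\tau$. Two integrations by parts in $\tau$ (the boundary terms vanish because $g$ and its $\tau$-derivatives vanish at $\tau=0$ and because $B_n(0)=0$) move the derivatives onto $B_n$, yielding $z_n(t)=\int_0^t(g(\cdot,\tau),\mathcal N_s\varphi_n)_{L^2(\Omc)}\tfrac{1}{\lambda_n}B_n''(t-\tau)\,d\tau$. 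Adding back $(V_g(\cdot,t),\varphi_n)_{L^2(\Omega)}=-\tfrac{1}{\lambda_n}(g(\cdot,t),\mathcal N_s\varphi_n)_{L^2(\Omc)}$ and checking that the extra term combines with the $B_n''$ expression — using $B_n''(0)=-\delta\lambda_nB_n'(0)-\lambda_nB_n(0)=-\delta\lambda_n$ and the ODE satisfied by $B_n$ — gives precisely the stated formula \eqref{eq-22-1} for $v_n(t)=(v(\cdot,t),\varphi_n)_{L^2(\Omega)}$; differentiating under the integral in $m$ gives $\partial_t^mv$.

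For the regularity and the estimate \eqref{ESt-Ind} I would integrate by parts once more in $\tau$ to rewrite $\partial_t^mv_n(t)$ as a sum of a ``boundary'' term proportional to $\partial_t^mg$ at time $t$ divided by $\lambda_n$ and a convolution of $B_n'$ (or $B_n$) against $\partial_\tau^{m+2}g$, then bound the $W^{s,2}(\RR^N)$-norm via \eqref{norm-V}, i.e. $\|\partial_t^mv(\cdot,t)\|_{W^{s,2}(\RR^N)}^2\sim\sum_n\lambda_n|\partial_t^mv_n(t)|^2$. The key analytic input is a uniform bound $|B_n(t)|,|B_n'(t)|\le C\lambda_n^{-1/2}$ (or similar) on $[0,T]$: for $n>N_0$ one has $\lambda_n^+<0<|\lambda_n^-|$ with $\lambda_n^--\lambda_n^+=\sqrt{\mathbf D_n^\delta}\sim\delta\lambda_n$, so $|B_n|\le(\lambda_n^--\lambda_n^+)^{-1}\le C\lambda_n^{-1}$ and $|B_n'|\le 2/(\lambda_n^--\lambda_n^+)\le C\lambda_n^{-1}$; for $n\le N_0$ (a finite set) the trigonometric-exponential form of $B_n$ is bounded by a constant uniformly on $[0,T]$. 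Combining this with $\sum_n\lambda_n^{-1}|(g(\cdot,\tau),\mathcal N_s\varphi_n)_{L^2(\Omc)}|^2=\|U_{g(\cdot,\tau)}\|_{L^2(\Omega)}^2\le C\|g(\cdot,\tau)\|_{W^{s,2}(\Omc)}^2$ (from Proposition \ref{proposi-33} and \eqref{eqA9}) yields convergence of the series in $C^\infty([0,T];W^{s,2}(\RR^N))$ and the bound \eqref{ESt-Ind}; uniqueness follows because any two weak solutions have difference solving \eqref{main-EQ-3-2} with zero data, hence vanish by Lemma \ref{lem-SG}. The main obstacle I anticipate is the careful bookkeeping of the repeated integrations by parts together with the term coming from $V_g$, making sure all boundary contributions at $\tau=0$ and $\tau=t$ cancel and that the final closed form is exactly \eqref{eq-22-1}; the spectral estimates themselves are routine once the formula is in hand.
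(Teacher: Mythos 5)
Your route is essentially the paper's: lift the exterior datum by the harmonic extension of Proposition \ref{proposi-33} (your $V_g$ is the paper's $\phi$), reduce to a problem with zero exterior data and forcing $-\partial_{tt}V_g$, solve by spectral Duhamel with kernel $B_n$, convert via the identity \eqref{eqA9}, and close the estimate with uniform bounds on $B_n$; this is exactly Steps 1--3 of the paper's proof, so the approach is sound. One bookkeeping point in your sketch is actually wrong as written and should be fixed: in the double integration by parts the boundary contribution at $\tau=t$ does \emph{not} vanish --- it equals $\lambda_n^{-1}(g(\cdot,t),\mathcal N_s\varphi_n)_{L^2(\Omc)}B_n'(0)=\lambda_n^{-1}(g(\cdot,t),\mathcal N_s\varphi_n)_{L^2(\Omc)}$, and it is precisely this term that cancels $(V_g(\cdot,t),\varphi_n)_{L^2(\Omega)}=-\lambda_n^{-1}(g(\cdot,t),\mathcal N_s\varphi_n)_{L^2(\Omc)}$ when you add back $V_g$, giving \eqref{eq-22-1}; with your version (all boundary terms zero) the sum $z_n+(V_g,\varphi_n)$ would differ from \eqref{eq-22-1} by the projection of $V_g$, and no manipulation with $B_n''(0)$ repairs that. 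This mirrors the paper, where the boundary terms produce the $-\phi(x,t)$ in \eqref{eq-20} that cancels against $\phi$ in $v=\phi+w$. Three smaller corrections: the equivalence \eqref{norm-V} applies to the part $z\in W_0^{s,2}(\bOm)$, not to $v$ itself (which equals $g\neq0$ outside $\Omega$), so estimate $z$ spectrally and $V_g$ by Proposition \ref{proposi-33} separately, as the paper does; the Parseval identity should read $\sum_n\lambda_n^{-2}\left|(g(\cdot,\tau),\mathcal N_s\varphi_n)_{L^2(\Omc)}\right|^2=\|U_{g(\cdot,\tau)}\|_{L^2(\Omega)}^2$ (power $-2$, not $-1$, by \eqref{eqA9}); and the claimed bound $|B_n'(t)|\le C\lambda_n^{-1/2}$ is false (note $B_n'(0)=1$), though it is not needed --- the bound $|B_n(t)|\le C\lambda_n^{-1/2}$ (or the paper's $|\lambda_nB_n|\le C$ when $\delta>0$) together with the corrected Parseval identity closes the estimate \eqref{ESt-Ind}.
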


\begin{proof}
We proof the theorem in several steps.\\

{\bf Step 1}: Consider the following elliptic Dirichlet exterior problem:

\begin{align}\label{13}
\begin{cases}
 (-\Delta)^{s} \phi =0 & \Omega,\\
\phi=g & \ \Omc.
\end{cases}
\end{align}
We have shown in Proposition \ref{proposi-33} that for every $g\in W^{s,2}(\Omc)$,
there exists a unique function $\phi\in W^{s,2}(\RR^N)$ solution of \eqref{13}, and there is a constant $C>0$ such that
\begin{align}\label{E-DP-D}
\|\phi\|_{W^{s,2}(\RR^N)}\le C\|g\|_{W^{s,2}(\Omc)}.
\end{align}
Since $g$ depends on $(x,t)$, then $\phi$ also depends on $(x,t)$.  If in \eqref{13} one replaces $g$ by $\partial_t^mg$, $m\in\NN$, then the associated unique solution is given by $\partial_t^m\phi$ for every $m\in\NN_0$. From this, we can deduce that $\phi\in C^\infty([0,T]; W^{s,2}(\RR^N))$.

Now let $(v,v_t)$ be a solution of \eqref{main-EQ-2-2} and set $w:=v-\phi$. Then a simple calculation gives

\begin{align*}
w_{tt}+(-\Delta)^sw+\delta(-\Delta)^sw_t=&v_{tt}-\phi_{tt}+(-\Delta)^sv-(-\Delta)^s\phi+\delta(-\Delta)^sv_t-\delta(-\Delta)^s\phi_t\\
=&v_{tt}+(-\Delta)^sv+\delta(-\Delta)^sv_t-\phi_{tt}=-\phi_{tt} \;\;\mbox{ in }\;\Omega\times(0,T).
\end{align*}
In addition

\begin{align*}
w=v-\phi=g-g=0\;\mbox{ in }\;(\Omc)\times (0,T),
\end{align*}
and 

\begin{equation*}
\begin{cases}
w(\cdot,0)=v(\cdot,0)-\phi(\cdot,0)=-\phi(\cdot,0)\;\;\;&\mbox{ in }\;\Omega,\\
w_t(\cdot,0)=v_t(\cdot,0)-\phi_t(\cdot,0)=-\phi_t(\cdot,0)\;\;\;\;&\mbox{ in }\;\Omega.
\end{cases}
\end{equation*}
Since $g\in \mathcal D((\Omc)\times (0,T))$, we have that $\phi(\cdot,0)=\partial_t\phi(\cdot,0)=0$ in $\Omega$.  We have shown that a solution $(v,v_t)$ of \eqref{main-EQ-2-2} can be decomposed as $v=\phi+w$, where  $(w,w_t)$ solves the system
\begin{equation}\label{eq-w-D}
\begin{cases}
w_{tt}+(-\Delta)^sw+\delta(-\Delta)^sw_t=-\phi_{tt}\;\;&\mbox{ in }\;\Omega\times (0,T),\\
w=0  &\mbox{ in }\;(\Omc)\times (0,T),\\
w(\cdot,0)=0,\;\partial_tw(\cdot,0)=0\;\;&\mbox{ in }\;\Omega.
\end{cases}
\end{equation}
We notice that $\phi_{tt}\in C^\infty([0,T];W^{s,2}(\RR^N))$. \\

{\bf Step 2}: We observe that letting 

\begin{equation*}
W=\left(
\begin{array}{c}
w\\w_t \\ 
\end{array}
\right)\;\;\mbox{ and }\;   \Phi_{tt}=\left(
\begin{array}{c}
0\\-\phi_{tt}\\ 
\end{array}
\right),
\end{equation*}
then the system \eqref{eq-w-D} can be rewritten as the following first order Cauchy problem
\begin{equation}\label{eq-CP}
\begin{cases}
W_t+\mathcal A_\delta W=\Phi_{tt}\;\;&\mbox{ in }\;\Omega\times(0,T),\\
W(0)=\left(\begin{array}{c}
0\\0\\ 
\end{array}
\right)&\mbox{ in }\;\Omega,
\end{cases}
\end{equation}
where $\mathcal A_\delta$ is the matrix operator defined in \eqref{eq-42}. Proceeding as the proof of Proposition \ref{pro-sol-SG} and using semigroup theory, we get that \eqref{eq-CP} has a unique classical solution $W$ and hence,
\eqref{eq-w-D} has a unique weak (classical) solution $(w,w_t)$ such that $w\in C^\infty([0,T];W^{s,2}(\RR^N))$ and is given by

\begin{align}\label{eq-17}
w(x,t)=-\sum_{n=1}^{\infty}\left(\int_0^t \left( \phi_{\tau\tau}(\cdot,\tau),\varphi_{n}\right)_{L^2(\Omega)}B_{n}(t-\tau)d\tau\right)\varphi_{n}(x),
\end{align}
where we recall that $B_n$ is given in \eqref{11}.
Integrating  \eqref{eq-17} by parts we get that
\begin{align*}
w(x,t)=&-\sum_{n=1}^{\infty}\left(\int_0^t ( \phi(\cdot,\tau),\varphi_{n})_{L^2(\Omega)}B_{n}''(t-\tau)d\tau\right)\varphi_{n}(x)\notag\\
&-\sum_{n=1}^{\infty}\left((\phi_{\tau}(\cdot,\tau),\varphi_{n})_{L^2(\Omega)}B_{n}(t-\tau)\Big|_0^t\right)\varphi_{n}(x)\notag\\
&-\sum_{n=1}^{\infty}\left((\phi(\cdot,\tau),\varphi_{n})_{L^2(\Omega)}B_{n}'(t-\tau)\Big|_0^t\right)\varphi_{n}(x).
\end{align*}
We observe that
$B_{n}(0)=0$ and $B_{n}'(0)=1$ for all $n\in\N$. Since $\phi(\cdot,0)=\phi_{t}(\cdot,0)=0$, we get 
\begin{align}\label{eq-20}
w(x,t)=-\phi(x,t)-\sum_{n=1}^{\infty}\left(\int_0^t ( \phi(\cdot,\tau),\varphi_{n})_{L^2(\Omega)}B_{n}''(t-\tau)d\tau\right)\varphi_{n}(x).
\end{align}
Using the fact that $\varphi_{n}$ satisfies \eqref{ei-val-pro} and the integration by parts formula \eqref{Int-Part}-\eqref{Int-Part-2}, we get 

\begin{align}\label{eq-21}
\Big(\phi(\cdot,\tau),\lambda_{n}\varphi_{n}\Big)_{L^2(\Omega)}&=\Big( \phi(\cdot,\tau),(-\Delta)^s\varphi_{n}\Big)_{L^2(\Omega)}\nonumber\\
&=\Big((-\Delta)^s \phi(\cdot,\tau),\varphi_{n}\Big)_{L^2(\Omega)}-\int_{\R^{N}\setminus\Omega}\left(\phi\mathcal{N}_{s}\varphi_{n}-\varphi_{n}\mathcal{N}_{s}\phi\right)\;dx\nonumber\\
&=-\int_{\R^{N}\setminus\Omega}g\mathcal{N}_{s}\varphi_{n}dx.
\end{align}
From \eqref{eq-20} and \eqref{eq-21} we can deduce that
\begin{align*} 
w(x,t)=-\phi(x,t)+\sum_{n=1}^{\infty}\left(\int_0^t \Big( g(\cdot,\tau),\mathcal{N}_{s}\varphi_{n} \Big)_{L^2(\Omc)}\frac{1}{\lambda_{n}}B_{n}''(t-\tau)d\tau\right)\varphi_{n}(x).
\end{align*}
We have shown  \eqref{eq-22-1}. Since $\phi,w\in C^\infty([0,T];W^{s,2}(\RR^N))$, then $v\in C^\infty([0,T];W^{s,2}(\RR^N))$.\\

{\bf Step 3}: Using \eqref{eq-17} and calculating, we get that for every $t\in [0,T]$,
\begin{align}\label{Int-Es}
\|(-\Delta)_D^sw(\cdot,t)\|_{L^2(\Omega)}^2=&\left\|\sum_{n=1}^\infty\lambda_n\left(\int_0^t \left( \phi_{\tau\tau}(\cdot,\tau),\varphi_{n}\right)_{L^2(\Omega)}B_{n}(t-\tau)d\tau\right)\varphi_{n}\right\|_{L^2(\Omega)}^2\notag\\
\le &\int_0^t\left\|\sum_{n=1}^\infty\lambda_n\left( \phi_{\tau\tau}(\cdot,\tau),\varphi_{n}\right)_{L^2(\Omega)}B_{n}(t-\tau)\;d\tau\varphi_{n}\right\|_{L^2(\Omega)}^2\notag\\
\le&\int_0^t\sum_{n=1}^\infty\Big|\left(\phi_{\tau\tau}(\cdot,\tau),\varphi_{n}\right)_{L^2(\Omega)}\Big|^2\Big|\lambda_nB_{n}(t-\tau)\Big|^2\;d\tau.
\end{align}

We claim that there is a constant $C>0$ (independent of $n$) such that
\begin{align}\label{eqB}
\left|\lambda_nB_n(t)\right|\le C,\;\;\forall\;n\in\NN\;\mbox{ and }\; t\in [0,T].
\end{align}

We have the following situations.
\begin{itemize}
\item If $\delta=0$, then ${\bf D_n^0}<0$ so that $\alpha_n=0$, $\beta_n=\sqrt{\lambda_n}$, $\lambda_n^{\pm}=0$ for every $n\in\NN$. Thus
\begin{align*}
\left|\lambda_nB_n(t)\right|^2=\frac{\lambda_n^2\sin^2(\sqrt{\lambda_n} t)}{\lambda_n^2}\le 1,\;\;\forall\;n\in\NN.
\end{align*}
\item If $\delta>0$ and $N_0=0$, then since $\lambda_n^{\pm}<0$, we have that
\begin{align*}
\left|\lambda_nB_n(t)\right|^2=\lambda_n^2\frac{e^{2\lambda_n^- t}-2e^{(\lambda_n^-+\lambda_n^+)t}+e^{2\lambda_n^+t}}{(\lambda_n^--\lambda_n^+)^2}\le \frac{4\lambda_n^2}{\delta^2\lambda_n^2-4\lambda_n}\le \frac{4\lambda_1}{\delta\lambda_1-4},\;\;\forall\;n\in\NN.
\end{align*}
\item If $\delta>0$ and $0<N_0<\infty$, then since $\alpha_n<0$, we have that
\begin{align*}
\left|\lambda_nB_n(t)\right|^2=\frac{\lambda_n^2\sin^2(\beta_n t)}{\beta_n^2}e^{2\alpha_n t}\le \frac{\lambda_n^2}{\beta_n^2}=\frac{4\lambda_n}{4-\delta^2\lambda_n}\le\frac{4\lambda_{N_0}}{4-\delta^2\lambda_{N_0}} ,\;\;\forall\;1\le n\le N_0,
\end{align*}
and since $\lambda_n^{\pm}<0$ for all $n>N_0$, we have that
\begin{align*}
\left|\lambda_nB_n(t)\right|^2=\lambda_n^2\frac{e^{2\lambda_n^- t}-2e^{(\lambda_n^-+\lambda_n^+)t}+e^{2\lambda_n^+t}}{(\lambda_n^--\lambda_n^+)^2}\le \frac{4\lambda_n^2}{\delta^2\lambda_n^2-4\lambda_n}\le \frac{4\lambda_1}{\delta\lambda_1-4},\;\;\forall\;n>N_0.
\end{align*}
\end{itemize}
The proof of the claim \eqref{eqB} is complete.

It follows from \eqref{Int-Es}, \eqref{eqB} and \eqref{E-DP-D} that for every $t\in [0,T]$,
\begin{align}\label{Int-Es-2}
\|(-\Delta)_D^sw(\cdot,t)\|_{L^2(\Omega)}^2\le &C\int_0^t\|\phi_{\tau\tau}(\cdot,\tau)\|_{L^2(\Omega)}^2\;d\tau\le C\int_0^t\|\phi_{\tau\tau}(\cdot,\tau)\|_{W^{s,2}(\RR^N)}^2\;d\tau\notag\\
\le & C\int_0^t\|g_{\tau\tau}(\cdot,\tau)\|_{W^{s,2}(\Omc)}^2\;d\tau\le CT\|g_{tt}\|_{L^\infty((0,T);W^{s,2}(\Omc))}^2.
\end{align}
Using \eqref{Int-Es-2}, we get that for every $t\in [0,T]$,
\begin{align*}
\|v(\cdot,t)\|_{W^{s,2}(\RR^N)}\le& C\left(\|(-\Delta)_D^sw(\cdot,t)\|_{L^2(\Omega)}+\|\phi(\cdot,t)\|_{W^{s,2}(\RR^N)}\right)\\
\le &C\left(\|g_{tt}\|_{L^\infty((0,T);W^{s,2}(\Omc))}+\|g(\cdot,t)\|_{W^{s,2}(\Omc)}\right).
\end{align*}
We have shown  \eqref{ESt-Ind} for $m=0$.
Proceeding by induction on $m$ we get  \eqref{ESt-Ind} for $m\in\NN_0$. The proof is finished.
\end{proof}

We conclude this subsection with the following result.

\begin{corollary}
For  every $(u_0, u_1)\in W_0^{s,2}(\bOm)\times L^2(\Omega)$ and $g\in\mathcal D((\Omc)\times(0,T))$, the system \eqref{SD-WE} has a unique solution $(u,u_t)$ given by
 \begin{align*}
u(x,t)=&\sum_{n=1}^{\infty}\Big(A_{n}(t)u_{0,n}+B_{n}(t)u_{1,n}\Big)\varphi_{n}(x)\notag\\
&+\sum_{n=1}^{\infty}\left(\int_0^t \Big( g(\cdot,\tau),\mathcal{N}_{s}\varphi_{n} \Big)_{L^2(\Omc)}\frac{1}{\lambda_{n}}B_{n}''(t-\tau)d\tau\right)\varphi_{n}(x).
\end{align*}
\end{corollary}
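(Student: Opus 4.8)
The plan is to obtain the Corollary directly from the superposition principle together with the two representation results already proved, namely Proposition~\ref{pro-sol-SG} and Theorem~\ref{theo-44}. Recall from Section~\ref{sec-2} that if $(v,v_t)$ is a weak solution of \eqref{main-EQ-2-2} and $(w,w_t)$ is a solution of \eqref{main-EQ-3-2}, then $u:=v+w$ solves \eqref{SD-WE}. So the first step is to invoke Proposition~\ref{pro-sol-SG}: since $(u_0,u_1)\in W_0^{s,2}(\bOm)\times L^2(\Omega)$, the system \eqref{main-EQ-3-2} has a unique solution $(w,w_t)$ with the regularity \eqref{regulari}, given by the series \eqref{eq-9}. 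The second step is to invoke Theorem~\ref{theo-44}: since $g\in\mathcal D((\Omc)\times(0,T))$, the system \eqref{main-EQ-2-2} has a unique weak (classical) solution $(v,v_t)$ with $v\in C^\infty([0,T];W^{s,2}(\RR^N))$, given by the series \eqref{eq-22-1}.

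The third step is simply to add the two formulas. With the notation $u_{0,n}=(u_0,\varphi_n)_{L^2(\Om)}$ and $u_{1,n}=(u_1,\varphi_n)_{L^2(\Om)}$, the function $u=v+w$ equals
\begin{align*}
u(x,t)=&\sum_{n=1}^{\infty}\Big(A_{n}(t)u_{0,n}+B_{n}(t)u_{1,n}\Big)\varphi_{n}(x)\\
&+\sum_{n=1}^{\infty}\left(\int_0^t \Big( g(\cdot,\tau),\mathcal{N}_{s}\varphi_{n} \Big)_{L^2(\Omc)}\frac{1}{\lambda_{n}}B_{n}''(t-\tau)\,d\tau\right)\varphi_{n}(x),
\end{align*}
which is the claimed expression. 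Moreover, since $w\in C([0,T];W_0^{s,2}(\bOm))\cap C^1([0,T];L^2(\Omega))$ and $v\in C^\infty([0,T];W^{s,2}(\RR^N))$, the sum has the regularity $u\in C([0,T];W^{s,2}(\RR^N))\cap C^1([0,T];L^2(\Omega))$; and by construction $u=g$ in $(\Omc)\times(0,T)$, $u(\cdot,0)=v(\cdot,0)+w(\cdot,0)=u_0$, $u_t(\cdot,0)=v_t(\cdot,0)+w_t(\cdot,0)=u_1$ in $\Omega$, while the equation is satisfied because it holds separately for $v$ and for $w$.

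Finally, uniqueness follows from the uniqueness statements already contained in Proposition~\ref{pro-sol-SG} and Theorem~\ref{theo-44}: a solution of \eqref{SD-WE} is by definition of the form $v+w$ with $v$ the weak solution of \eqref{main-EQ-2-2} and $w$ the solution of \eqref{main-EQ-3-2} associated with the prescribed data, and each of $v$ and $w$ is unique (the latter because $-\mathcal A_\delta$ generates a strongly continuous semigroup on $\mathcal H$ by Lemma~\ref{lem-SG}, so that the homogeneous Cauchy problem \eqref{equa27} has only the trivial solution); hence the pair $(u,u_t)$ is uniquely determined. Equivalently, the difference of two solutions with the same data solves \eqref{SD-WE} with $u_0=u_1=0$ and $g=0$, which reduces to \eqref{equa27} with $W_0=0$ and thus vanishes. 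I do not anticipate any real obstacle here; the only point requiring minor care is checking that the regularity classes produced by Proposition~\ref{pro-sol-SG} and Theorem~\ref{theo-44} are compatible, so that $u=v+w$ is genuinely a solution in the sense used throughout the paper and the reduction to the semigroup setting is legitimate.
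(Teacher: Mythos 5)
Your proposal is correct and follows essentially the same route as the paper: decompose $u=v+w$, apply Proposition \ref{pro-sol-SG} to the homogeneous-exterior part $w$ and Theorem \ref{theo-44} to the nonhomogeneous part $v$, and add the two series. The extra remarks you make on uniqueness and compatibility of regularity classes are fine but not beyond what the paper's one-line proof already implicitly relies on.
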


\begin{proof}
Since a solution $(u,u_t)$ of \eqref{SD-WE} can de decomposed into $u=v+w$ where $(v,v_t)$ solved \eqref{main-EQ-2} and $(w,w_t)$ is a solution of \eqref{main-EQ-3},  the result follows from  Proposition \ref{pro-sol-SG} and Theorem \ref{theo-44}.
\end{proof}

\subsection{Series solutions of the dual system}

Now we consider the dual system \eqref{ACP-Dual}. That is, the  backward system 
\begin{equation}\label{Dual}
\begin{cases}
\psi_{tt} +(-\Delta)^s\psi-\delta(\Delta)^s\psi_t=0\;\;&\mbox{ in }\; \Omega\times (0,T),\\
\psi=0&\mbox{ in }\;(\Omc)\times (0,T),\\
\psi(\cdot,T)=\psi_0,\;\;\;\; -\psi_t(\cdot,T)=\psi_1\;&\mbox{ in }\;\Omega,
\end{cases}
\end{equation}

Let $\psi_{0,n}:=(\psi_0,\varphi_n)_{L^2(\Omega)}\;\mbox{ and }\; \psi_{1,n}:=(\psi_1,\varphi_n)_{L^2(\Omega)}$.
We have the following existence result.

\begin{theorem}\label{theo-48}
For every $(\psi_0,\psi_1)\in W_0^{s,2}(\bOm)\times L^2(\Omega)$, the dual system \eqref{Dual} has a unique weak solution $(\psi,\psi_t)$ given by
\begin{align}\label{eq-25}
\psi(x,t)=\sum_{n=1}^{\infty}\Big(\psi_{0,n}C_{n}(T-t)+\psi_{1,n}D_{n}(T-t)\Big)\varphi_{n}(x),
\end{align}
where $C_{n}(t)=A_{n}(t)$ and  $D_{n}(t)=-B_{n}(t)$ and we recall that $A_n(t)$ and $B_n(t)$ are given in  \eqref{10} and \eqref{11}, respectively.
In addition the following assertions hold.
\begin{enumerate}
\item There is a constant $C>0$ such that for all $t\in [0,T]$,
\begin{equation}\label{Dual-EST-1}
 \|\psi(\cdot,t)\|_{W_0^{s,2}(\bOm)}^2+ \|\psi_t(\cdot,t)\|_{L^2(\Omega)}^2\le C\left(\|\psi_0\|_{W_0^{s,2}(\bOm)}^2+\|\psi_1\|_{L^2(\Omega)}^2\right),
\end{equation}
and 
\begin{equation}\label{Dual-EST-1-2}
 \|\psi_{tt}(\cdot,t)\|_{W^{-s,2}(\bOm)}^2\le \left(\|\psi_0\|_{W_0^{s,2}(\bOm)}^2+\|\psi_1\|_{L^2(\Omega)}^2\right).
 \end{equation}
\item We have that $\psi\in C([0,T); D((-\Delta)_D^s))\cap L^\infty((0,T);L^2(\Omega))$.
 
\item The mapping 
\[[0,T)\ni t\mapsto\mathcal N_s\psi(\cdot,t)\in L^2(\Omc),\]
 can be analytically extended to the half-plane $\Sigma_T:=\{z\in\CC:\;{Re}(z)<T\}$.
\end{enumerate}
\end{theorem}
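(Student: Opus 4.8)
The plan is to use the explicit series representation of solutions and the spectral structure developed in Section \ref{sec-4}, treating the dual system essentially as a time-reversed version of the homogeneous system \eqref{main-EQ-3-2}. First I would solve \eqref{Dual} by the spectral method: writing $\psi(x,t)=\sum_n \psi_n(t)\varphi_n(x)$, the coefficients satisfy $\psi_n''(t)+\lambda_n\psi_n(t)-\delta\lambda_n\psi_n'(t)=0$; making the change of variable $\tau=T-t$ converts this into the ODE \eqref{4} solved in Proposition \ref{pro-sol-SG}, so that $\psi_n(t)$ is a linear combination of $A_n(T-t)$ and $B_n(T-t)$. Matching the final data $\psi(\cdot,T)=\psi_0$, $\psi_t(\cdot,T)=\psi_1$ and using $A_n(0)=1$, $A_n'(0)=0$, $B_n(0)=0$, $B_n'(0)=1$ (together with the sign flip coming from the $\tau$-derivative) pins down $C_n(t)=A_n(t)$ and $D_n(t)=-B_n(t)$, giving \eqref{eq-25}. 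Uniqueness follows exactly as for \eqref{main-EQ-3-2} from Lemma \ref{lem-SG} applied to the operator $\mathcal A_{-\delta}$ (or directly from the Cauchy problem after time reversal).

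For part (1), the estimates \eqref{Dual-EST-1} and \eqref{Dual-EST-1-2} reduce, via \eqref{norm-V}, \eqref{norm-V-2}, and the series \eqref{eq-25}, to proving uniform bounds on the modal coefficients: one needs $|\lambda_n^{1/2}A_n(t)|^2$ controlled against $\lambda_n$-free data for $\psi_0$-terms, etc. Concretely I would show there is $C>0$ with $|A_n(t)|\le C$, $|\lambda_n^{1/2}B_n(t)|\le C$, $|A_n'(t)|\le C\lambda_n^{1/2}$, $|\lambda_n^{1/2}B_n'(t)|\le C\lambda_n^{1/2}$ for all $n$ and $t\in[0,T]$, splitting into the three cases $n\le N_0$ (oscillatory factor bounded by $e^{\alpha_n t}\le 1$ since $\alpha_n\le 0$, and $\beta_n\sim\lambda_n^{1/2}$), $n>N_0$ (using $\lambda_n^\pm<0$, the convergence \eqref{convergence}, and elementary bounds on $(\lambda_n^-e^{\lambda_n^+t}-\lambda_n^+e^{\lambda_n^-t})/(\lambda_n^--\lambda_n^+)$ and its derivatives), and the purely oscillatory case $\delta=0$. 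These are the same kinds of computations already carried out in the proof of Theorem \ref{theo-44} for \eqref{eqB}. The bound \eqref{Dual-EST-1-2} follows from the variational identity $\psi_{tt}=-(-\Delta)^s(\psi-\delta\psi_t)$ together with \eqref{Dual-EST-1} and the continuity of $(-\Delta)^s:W_0^{s,2}(\bOm)\to W^{-s,2}(\bOm)$. Part (2) is then a matter of upgrading the regularity: each $\varphi_n\in D((-\Delta)_D^s)$, and one checks that $\sum_n|\lambda_n(\psi_{0,n}C_n(T-t)+\psi_{1,n}D_n(T-t))|^2$ is finite and depends continuously on $t$ on $[0,T)$ — here the factor $e^{\alpha_n(T-t)}$ or $e^{\lambda_n^\pm(T-t)}$ provides the decay in $\lambda_n$ needed as long as $t<T$, but degenerates at $t=T$, which is exactly why the interval is half-open.

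Part (3) is where I expect the main work. Using the integration-by-parts identity \eqref{eqA9}, or more directly \eqref{Int-Part-2}, the nonlocal normal derivative of $\psi$ has the series expansion $\mathcal N_s\psi(x,t)=\sum_n (\psi_{0,n}C_n(T-t)+\psi_{1,n}D_n(T-t))\mathcal N_s\varphi_n(x)$ in $L^2(\Omc)$. I would show this series, viewed as a function of a complex variable $z$ with $t$ replaced by $z$, converges locally uniformly on $\Sigma_T=\{\mathrm{Re}(z)<T\}$ in the $L^2(\Omc)$-norm. The key point is that for $\mathrm{Re}(z)<T$ each coefficient involves $e^{\alpha_n(T-z)}$ (for $n\le N_0$, finitely many terms, harmless) or $e^{\lambda_n^\pm(T-z)}$ (for $n>N_0$), and since $\lambda_n^-\to-\infty$ while $\lambda_n^+\to-\delta$, the factor $e^{\lambda_n^-(T-z)}$ decays super-exponentially in $n$ on any half-plane $\mathrm{Re}(z)\le T-\varepsilon$, beating the polynomial growth of $\|\mathcal N_s\varphi_n\|_{L^2(\Omc)}$ and of $\lambda_n$; the $e^{\lambda_n^+(T-z)}$ terms are uniformly bounded and multiplied by the summable coefficients $\lambda_n^{-1/2}\psi_{0,n}$-type quantities, or one regroups so that the problematic $D_n$-contribution retains a $\lambda_n^{-1}$ or $\lambda_n^{-1/2}$ gain. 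Since each partial sum is entire in $z$ and the convergence is locally uniform, Weierstrass's theorem gives analyticity of the limit on $\Sigma_T$, and it agrees with $t\mapsto\mathcal N_s\psi(\cdot,t)$ on $[0,T)$ by construction. The delicate bookkeeping is making the estimates on $\|\mathcal N_s\varphi_n\|_{L^2(\Omc)}$ (finite by \cite[Lemma 3.2]{GSU}, but one needs at most polynomial growth in $n$, which can be extracted from $\|\varphi_n\|_{W^{s,2}(\RR^N)}^2\le C\lambda_n$ and the continuity of the normal-derivative trace) interact correctly with the exponential factors — that is the step I would write most carefully.
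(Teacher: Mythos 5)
Your overall route is the same as the paper's (series representation after time reversal, uniform bounds on the modal coefficients, locally uniform convergence plus Weierstrass for the analytic extension), but two specific steps as written would fail. First, for $\delta>0$ the estimate \eqref{Dual-EST-1-2} does not follow from the identity $\psi_{tt}=-(-\Delta)^s(\psi-\delta\psi_t)$ together with \eqref{Dual-EST-1} and the boundedness of $(-\Delta)^s:W_0^{s,2}(\bOm)\to W^{-s,2}(\bOm)$: the estimate \eqref{Dual-EST-1} controls $\psi_t$ only in $L^2(\Omega)$, which gives no control of $\|(-\Delta)^s\psi_t(\cdot,t)\|_{W^{-s,2}(\bOm)}$. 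The paper bounds $\|(-\Delta)_D^s\psi(\cdot,t)\|_{W^{-s,2}(\bOm)}$ and $\|(-\Delta)_D^s\psi_t(\cdot,t)\|_{W^{-s,2}(\bOm)}$ directly from the series, which requires modal bounds beyond the four you list, namely $|C_n'(t)|\le C$ (for $n>N_0$ this is exactly $|\lambda_nB_n(t)|\le C$, i.e.\ \eqref{eqB}, since $C_n'=-\lambda_nB_n$) and $|\lambda_n^{\frac 12}D_n'(t)|\le C$.

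Second, and more seriously, your justification of part (2) and your convergence mechanism in part (3) rest on the claim that the factors $e^{\lambda_n^{\pm}(T-t)}$ ``provide the decay in $\lambda_n$'' once $t<T$. They do not: for $\delta>0$ the root $\lambda_n^{+}$ converges to a finite negative limit (cf.\ \eqref{convergence}), so $e^{\lambda_n^{+}(T-t)}$ tends to a nonzero constant as $n\to\infty$, and for $\delta=0$ the modes are purely oscillatory; only the $e^{\lambda_n^{-}(T-t)}$ branch decays fast in $n$. Hence your argument that $\sum_n\big|\lambda_n\big(\psi_{0,n}C_n(T-t)+\psi_{1,n}D_n(T-t)\big)\big|^2<\infty$ for $t<T$ does not go through as stated (the paper's Step 4 instead invokes the uniform bounds \eqref{ES-C}--\eqref{ES-D}), and in part (3) a term-by-term estimate in which polynomial growth of $\|\mathcal N_s\varphi_n\|_{L^2(\Omc)}$ is ``beaten'' by exponential decay cannot close on the $\lambda_n^{+}$-branch: with $(\lambda_n^{1/2}\psi_{0,n})_n$ and $(\psi_{1,n})_n$ only in $\ell^2$ you do not get absolute summability. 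The paper avoids any growth estimate on $\|\mathcal N_s\varphi_n\|_{L^2(\Omc)}$ altogether: it applies the boundedness of $\mathcal N_s:W^{s,2}(\RR^N)\to L^2(\Omc)$ to the tail partial sums, which lie in $W_0^{s,2}(\bOm)$, and computes their $W_0^{s,2}$-norm by orthogonality via \eqref{norm-V}, for which only $|C_n(T-z)|\le C$ and $|\lambda_n^{\frac 12}D_n(T-z)|\le C$ on $\{\mathrm{Re}\,z\le T-\tau\}$ are needed; you should rework parts (2) and (3) along these lines. (A minor point: after the change of variable $\tau=T-t$ the dual system is governed by $\mathcal A_\delta$ itself, not $\mathcal A_{-\delta}$, so uniqueness follows from Lemma \ref{lem-SG} as it stands.)
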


The proof of the theorem uses heavily the following result.

\begin{lemma}
There is a constant $C>0$ (independent of $n$) such that for every $t\in [0,T]$, 

\begin{align}\label{ES-C}
\max\left\{\left|C_n(t)\right|^2,\left|\lambda_n^{\frac 12}C_n(t)\right|^2, \left|C_n'(t)\right|^2\right\}\le C,
\end{align}
and 

\begin{align}\label{ES-D}
\max\left\{\left|\lambda_n^{\frac 12}D_n(t)\right|^2,\left|\lambda_nD_n(t)\right|^2, \left|D_n'(t)\right|^2,\left|\lambda_n^{\frac 12}D_n'(t)\right|^2\right\}\le C.
\end{align}
\end{lemma}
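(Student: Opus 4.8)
The plan is to establish the bounds \eqref{ES-C} and \eqref{ES-D} by splitting into the three regimes identified in Section~\ref{sec-4}: the case $\delta=0$, the case $\delta>0$ with $1\le n\le N_0$ (oscillatory, $\mathbf{D}_n^\delta<0$), and the case $n>N_0$ (overdamped, $\mathbf{D}_n^\delta\ge 0$, with $\lambda_n^\pm<0$ and the convergence \eqref{convergence}). Since $N_0$ is a fixed finite number, the finitely many terms with $n\le N_0$ in the $\delta>0$ case can always be absorbed into the constant $C$: on a compact time interval $[0,T]$ each of $C_n$, $\lambda_n^{1/2}C_n$, $C_n'$, $\lambda_n^{1/2}D_n$, $\lambda_nD_n$, $D_n'$, $\lambda_n^{1/2}D_n'$ is a bounded continuous function of $t$, so only the tail $n>N_0$ (and the $\delta=0$ case) requires genuine uniform-in-$n$ estimates.

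First I would record the explicit formulas. Recall $C_n(t)=A_n(t)$ and $D_n(t)=-B_n(t)$. For $\delta=0$ one has $\alpha_n=0$, $\beta_n=\sqrt{\lambda_n}$, so $C_n(t)=\cos(\sqrt{\lambda_n}\,t)$ and $D_n(t)=-\sin(\sqrt{\lambda_n}\,t)/\sqrt{\lambda_n}$; then $|C_n|\le 1$, $|\lambda_n^{1/2}D_n|=|\sin(\sqrt{\lambda_n}t)|\le 1$, $|\lambda_nD_n|=\sqrt{\lambda_n}|\sin(\sqrt\lambda_n t)|$ — wait, this last one is \emph{not} bounded, which forces the observation that in the $\delta=0$ regime $|\lambda_nD_n(t)|$ is genuinely unbounded. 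However, looking at where \eqref{ES-D} is used (the proof of Theorem~\ref{theo-48}, in particular the estimate \eqref{Dual-EST-1} on $\|\psi\|_{W_0^{s,2}}^2+\|\psi_t\|_{L^2}^2$), the coefficient multiplying $\psi_{1,n}$ in the $W_0^{s,2}$-norm of $\psi$ is $\lambda_n^{1/2}D_n$, not $\lambda_nD_n$; the term $\lambda_nD_n$ appears rather when estimating $\psi$ in $D((-\Delta)_D^s)$, where one only needs $t\in[0,T)$ bounded away from... no — so I should double-check: for $\delta>0$, $|\lambda_nD_n(t)|^2=\lambda_n^2 B_n(t)^2$ was already shown in \eqref{eqB} to be bounded uniformly in $n$ and $t\in[0,T]$, so \eqref{ES-D} holds there; for $\delta=0$ it must be that the claim \eqref{ES-D} is intended under the standing regime of the lemma's application, or that $D_n'$ rather than $\lambda_nD_n$ is what is genuinely needed. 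The honest route is to prove each inequality in the regime where it is true and flag that in the $\delta=0$ case the $\lambda_nD_n$ bound holds only for $t$ in compact subsets of $[0,T)$ via the analytic extension in part (3) — consistent with the statement ``$\psi\in C([0,T);D((-\Delta)_D^s))$'' rather than on the closed interval.

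The substantive work is the overdamped tail $n>N_0$. Here I would write $\lambda_n^\pm=\tfrac12(-\delta\lambda_n\pm\sqrt{\mathbf{D}_n^\delta})$, note $\lambda_n^-<\lambda_n^+<0$, $\lambda_n^-\lambda_n^+=\lambda_n$, $\lambda_n^--\lambda_n^+=\sqrt{\mathbf{D}_n^\delta}=\sqrt{\delta^2\lambda_n^2-4\lambda_n}$, and use $e^{\lambda_n^\pm t}\le 1$ on $[0,T]$ throughout. From \eqref{10}--\eqref{11}: $C_n=A_n=(\lambda_n^-e^{\lambda_n^+t}-\lambda_n^+e^{\lambda_n^-t})/(\lambda_n^--\lambda_n^+)$, so $|C_n|\le (|\lambda_n^-|+|\lambda_n^+|)/|\lambda_n^--\lambda_n^+| = (\lambda_n^+-\lambda_n^-+2|\lambda_n^+|)/(\lambda_n^--\lambda_n^+)$ — better to just bound $|\lambda_n^-|/|\lambda_n^--\lambda_n^+|\le 1$ (since $|\lambda_n^-|\ge|\lambda_n^+|$ gives $|\lambda_n^-|\le|\lambda_n^--\lambda_n^+|+|\lambda_n^+|$... actually $\lambda_n^--\lambda_n^+=-(|\lambda_n^-|-|\lambda_n^+|)$, hmm signs), so the cleanest is: $\lambda_n^- - \lambda_n^+ = -\sqrt{\mathbf D_n^\delta}<0$, $|\lambda_n^-|=\tfrac12(\delta\lambda_n+\sqrt{\mathbf D_n^\delta})$, $|\lambda_n^+|=\tfrac12(\delta\lambda_n-\sqrt{\mathbf D_n^\delta})$, hence $|\lambda_n^-|\le \delta\lambda_n$ and $\sqrt{\mathbf D_n^\delta}\ge\tfrac{\delta\lambda_n}{\sqrt2}$ for $n$ large (since $\mathbf D_n^\delta/(\delta\lambda_n)^2=1-4/(\delta^2\lambda_n)\to 1$), giving $|\lambda_n^-|/\sqrt{\mathbf D_n^\delta}\le \sqrt2\,\delta\lambda_n/(\delta\lambda_n)=\sqrt2$ — uniformly bounded. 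For $\lambda_n^{1/2}C_n$ I would multiply through and use $\lambda_n^{1/2}|\lambda_n^\mp|/\sqrt{\mathbf D_n^\delta}$: since $\lambda_n^{1/2}|\lambda_n^-|\le \lambda_n^{1/2}\delta\lambda_n=\delta\lambda_n^{3/2}$ while $\sqrt{\mathbf D_n^\delta}\sim\delta\lambda_n$, this ratio grows like $\lambda_n^{1/2}$ — so again \eqref{ES-C}'s middle term is \emph{not} uniformly bounded in the overdamped regime, which means I have the formulas backwards somewhere, or $C_n$ here is $A_n$ evaluated so that the $\lambda_n^{1/2}$ is compensated. Given these sign/scaling subtleties, \textbf{the main obstacle} is bookkeeping: correctly matching which of $A_n,B_n,A_n',B_n'$ carries which power of $\lambda_n$ in each estimate, and verifying in the overdamped tail that the ratios $|\lambda_n^\mp|^k/\sqrt{\mathbf D_n^\delta}$ collapse to constants using $\mathbf D_n^\delta\sim\delta^2\lambda_n^2$ and $\lambda_n^-\lambda_n^+=\lambda_n$. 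Once the correct pairing is fixed, each bound reduces to: $e^{\lambda_n^\pm t}\le1$, the elementary inequality $|e^a-e^b|\le|a-b|$ combined with $|\lambda_n^--\lambda_n^+|=\sqrt{\mathbf D_n^\delta}$ to cancel the denominator, and the asymptotics \eqref{convergence} ($\lambda_n^+\to-\delta$, $\lambda_n^-\to-\infty$) which make $|\lambda_n^+|$ bounded and $\lambda_n^-/(\lambda_n^--\lambda_n^+)\to1$. I would then conclude by taking $C$ to be the maximum over the finitely many $n\le N_0$ of the relevant continuous functions on $[0,T]$ together with the uniform tail constants just obtained, with the caveat (flagged for use in the sequel) that the $\lambda_nD_n$ and $\lambda_n^{1/2}C_n$ bounds requiring a power of $\lambda_n$ in the numerator hold with full uniformity only when $\delta>0$, matching exactly the regularity $\psi\in C([0,T);D((-\Delta)_D^s))$ asserted in Theorem~\ref{theo-48}.
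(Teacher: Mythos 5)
Your submission is a plan rather than a proof, and the plan is never executed. For the decisive range $n>N_0$ you assemble the right ingredients ($e^{\lambda_n^{\pm}t}\le 1$, $\lambda_n^+\lambda_n^-=\lambda_n$, $|\lambda_n^--\lambda_n^+|=\sqrt{\mathbf{D}_n^\delta}\sim\delta\lambda_n$, $\lambda_n^+$ bounded), but you stop before establishing any of the seven bounds in \eqref{ES-C}--\eqref{ES-D}, deferring everything to ``once the correct pairing is fixed''. At the two places where you do push a computation you end in an unresolved state: you find that $\lambda_n^{1/2}C_n(t)$ does not look uniformly bounded and blame your own bookkeeping, and for $\delta=0$ you note that $\lambda_nD_n(t)=-\sqrt{\lambda_n}\sin(\sqrt{\lambda_n}\,t)$ is unbounded and then propose the patch that the bound ``holds for $t$ in compact subsets of $[0,T)$'', which is incorrect: when $\delta=0$ that quantity is unbounded in $n$ on every time interval, so restricting $t$ rescues nothing. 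A proof must either establish the stated inequalities or prove precisely those entries that hold, in the regimes where they hold; you do neither, and that is a genuine gap.

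For comparison, the paper's own argument is exactly the skeleton you sketched: it reduces to $n>N_0$ (hence implicitly to $\delta>0$), recalls $\lambda_n^{\pm}<0$, invokes the elementary bound $|\lambda_n^{\pm}e^{\lambda_n^{\pm}t}|\le C$, and refers to ``easy computations as in the proof of \eqref{eqB}''. Your bookkeeping was in fact not the problem: since $|\lambda_n^-|/\sqrt{\mathbf{D}_n^\delta}\ge 1$, $|\lambda_n^+|/\sqrt{\mathbf{D}_n^\delta}=O(1/\lambda_n)$ and $\lambda_n^+$ stays in a bounded set, one gets $C_n(t)\ge \tfrac12 e^{-Kt}$ for all large $n$, so $\lambda_n^{1/2}C_n(t)$ really is unbounded in $n$ for every fixed $t\in[0,T]$; likewise $D_n'(0)=-1$ makes $\lambda_n^{1/2}D_n'$ unbounded at $t=0$ (and even the quoted bound $|\lambda_n^-e^{\lambda_n^-t}|\le C$ only holds for $t$ bounded away from $0$), while for $\delta=0$ the entries $\lambda_n^{1/2}C_n$, $C_n'$, $\lambda_nD_n$, $\lambda_n^{1/2}D_n'$ all fail. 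The entries that are uniformly bounded on $[0,T]$ when $\delta>0$, and which carry the estimates \eqref{Dual-EST-1}--\eqref{Dual-EST-1-2} for $(\psi_0,\psi_1)\in W_0^{s,2}(\bOm)\times L^2(\Omega)$, are $|C_n|$, $|C_n'|=|\lambda_nD_n|$, $|\lambda_n^{1/2}D_n|$ and $|D_n'|$; each follows in a few lines from \eqref{10}--\eqref{11}, $e^{\lambda_n^{\pm}t}\le1$, $\lambda_n^+\lambda_n^-=\lambda_n$ and $\sqrt{\mathbf{D}_n^\delta}\ge c\,\lambda_n$, exactly the computation of \eqref{eqB}, with the finitely many $n\le N_0$ absorbed into $C$ as you proposed. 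Carrying out those computations explicitly and clearly delimiting the remaining entries is what a complete answer required; as written, your attempt cannot be accepted as a proof, even though the difficulties you flagged are genuine features of the formulas and not slips on your part.
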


\begin{proof}
Firstly, we notice that it suffices to prove  \eqref{ES-C} and \eqref{ES-D} for $n>N_0$. Secondly, we recall that $\lambda_n^{\pm}<0$ for every $n\in\NN$. Thirdly, it is easy to show that there is a constant $C>0$ such that $\Big|\lambda_n^{\pm}e^{\lambda_n^{\pm t}}\Big|\le C$ for every $n>N_0$. From this estimate, we can deduce \eqref{ES-C} and \eqref{ES-D} by using some easy computations as the proof of  \eqref{eqB}. 
\end{proof}

\begin{proof}[\bf Proof of Theorem \ref{theo-48}]
Let

\begin{align*} 
\psi_0=\sum_{n=1}^{\infty}\psi_{0,n}\varphi_{n}, \quad \psi_1=\sum_{n=1}^{\infty}\psi_{1,n}\varphi_{n}.
\end{align*}
We proof the theorem in several steps. Here we include more details.\\

{\bf Step 1}: Proceeding in the same way as the proof of Proposition \ref{pro-sol-SG}, we easily get that 
\begin{align}\label{25}
\psi(x,t)=\sum_{n=1}^{\infty}\Big[C_{n}(T-t)\psi_{0,n}+D_{n}(T-t)\psi_{1,n}\Big]\varphi_{n}(x),
\end{align}
where we recall that $C_{n}(t)=A_{n}(t)$ and  $D_{n}(t)=-B_{n}(t)$. In addition, a simple calculation gives $\psi(x,T)=\psi_0(x)$ and $\psi_t(x,T)=-\psi_1(x)$ for a.e. $x\in\Omega$.

Let us show that $\psi$ satisfies the regularity and variational identity requirements. Let $1\le n\le m$ and set
\begin{align*}
\psi_m(x,t)=\sum_{n=1}^{m}\Big[C_{n}(T-t)\psi_{0,n}+D_{n}(T-t)\psi_{1,n}\Big]\varphi_{n}(x).
\end{align*}
For every $m,\tilde m\in\NN$ with $m>\tilde m$ and $t\in [0,T]$, we have that
\begin{align}\label{Nor1}
\|\psi_m(x,t)-\psi_{\tilde m}(x,t)\|_{W_0^{s,2}(\bOm)}^2=&\sum_{n=\tilde m+1}^m\Big|\lambda_n^{\frac 12}C_{n}(T-t)\psi_{0,n}+\lambda_n^{\frac 12}D_{n}(T-t)\psi_{1,n}\Big|^2\\
\le &2\sum_{n=\tilde m+1}^m\Big|\lambda_n^{\frac 12}C_{n}(T-t)\psi_{0,n}\Big|^2+2\sum_{n=\tilde m+1}^m\Big|\lambda_n^{\frac 12}D_{n}(T-t)\psi_{1,n}\Big|^2.\notag
\end{align}
Using \eqref{ES-C} and \eqref{ES-D} we get from \eqref{Nor1} that for every $m,\tilde m\in\NN$ with $m>\tilde m$ and $t\in [0,T]$,
\begin{align*}
\|\psi_m(x,t)-\psi_{\tilde m}(x,t)\|_{W_0^{s,2}(\bOm)}^2\le C\left(\sum_{n=\tilde m+1}^m\Big|\lambda_n^{\frac 12}\psi_{0,n}\Big|^2+\sum_{n=\tilde m+1}^m\Big|\psi_{1,n}\Big|^2\right)\longrightarrow 0 \;\mbox{ as }\; m,\tilde m\to\infty.
\end{align*}
We have show that the series
\begin{align*}
\sum_{n=1}^{\infty}\Big[C_{n}(T-t)\psi_{0,n}+D_{n}(T-t)\psi_{1,n}\Big]\varphi_{n}\longrightarrow v(\cdot,t)\;\mbox{ in }\; W_0^{s,2}(\bOm),
\end{align*}
 and that the convergence is uniform in $t\in [0,T]$. Hence, $\psi\in C([0,T];W_0^{s,2}(\bOm))$. Using \eqref{ES-C} and \eqref{ES-D} again we get that there is a constant $C>0$ such that for every $t\in [0,T]$,
 \begin{align}\label{A1}
 \|\psi(\cdot,t)\|_{W_0^{s,2}(\bOm)}\le C\Big(\|\psi_0\|_{W_0^{s,2}(\bOm)}+\|\psi_1\|_{L^2(\Omega)}\Big).
 \end{align}
 
{\bf Step 2}:  Next, we claim that $\psi_t\in C([0,T];L^2(\Omega))$. Calculating, we get that 
 \begin{align*}
 (\psi_m)_t(x,t)=-\sum_{n=1}^{m}\Big[C_{n}'(T-t)\psi_{0,n}+D_{n}'(T-t)\psi_{1,n}\Big]\varphi_{n}(x).
 \end{align*}
 Proceeding as above, we can easily deduce that the series
 \begin{align*}
 \sum_{n=1}^{\infty}\Big[C_{n}'(T-t)\psi_{0,n}+D_{n}'(T-t)\psi_{1,n}\Big]\varphi_{n}\longrightarrow \psi_t(\cdot,t)\;\mbox{ in }\;L^2(\Omega),
 \end{align*}
 and the convergence is uniform in $t\in [0,T]$. In addition using \eqref{ES-C} and \eqref{ES-D}, we get that there is a constant $C>0$ such that for every $t\in [0,T]$,
\begin{align}\label{A2}
\|\psi_t(\cdot,t)\|_{L^2(\Omega)}^2\le C\Big(\|\psi_0\|_{W_0^{s,2}(\bOm)}^2+\|\psi_1\|_{L^2(\Omega)}^2\Big).
\end{align}
The estimate \eqref{Dual-EST-1} follows from \eqref{A1} and \eqref{A2}.\\

{\bf Step 3}: We show that $\psi_{tt}\in C([0,T);W^{-s,2}(\bOm))$. Using \eqref{norm-V-2}, \eqref{ES-C} and \eqref{ES-D}, we get that for every $t\in [0,T]$,
\begin{align}\label{MW1}
\|(-\Delta)_D^s\psi(\cdot,t)\|_{W^{-s,2}(\bOm)}^2\le &2\sum_{n=1}^\infty\left(\Big|\lambda_n^{-\frac 12}\lambda_nC_n(T-s)\psi_{0,n}\Big|^2+\Big|\lambda_n^{-\frac 12}\lambda_nD_n(T-s)\psi_{1,n}\Big|^2\right)\notag\\
\le &2\sum_{n=1}^\infty\left(\Big|\lambda_n^{\frac 12}C_n(T-s)\psi_{0,n}\Big|^2+\Big|\lambda_n^{\frac 12}D_n(T-s)\psi_{1,n}\Big|^2\right)\notag\\
\le &C\Big(\|\psi_0\|_{W_0^{s,2}(\bOm)}^2+\|\psi_1\|_{L^2(\Omega)}^2\Big).
\end{align}
Using \eqref{norm-V-2}, \eqref{ES-C} and \eqref{ES-D} again we get that there is a constant $C>0$ such that for every $t\in [0,T]$, 
\begin{align}\label{MW2}
\|(-\Delta)_D^s\psi_t(\cdot,t)\|_{W^{-s,2}(\bOm)}^2\le &2\sum_{n=1}^\infty\left(\Big|\lambda_n^{-\frac 12}\lambda_nC_n'(T-s)\psi_{0,n}\Big|^2+\Big|\lambda_n^{-\frac 12}\lambda_nD_n'(T-s)\psi_{1,n}\Big|^2\right)\notag\\
\le &2\sum_{n=1}^\infty\left(\Big|\lambda_n^{\frac 12}C_n'(T-s)\psi_{0,n}\Big|^2+\Big|\lambda_n^{\frac 12}D_n'  (T-s)\psi_{1,n}\Big|^2\right)\notag\\
\le&C\Big(\|\psi_0\|_{W_0^{s,2}(\bOm)}^2+\|\psi_1\|_{L^2(\Omega)}^2\Big).
\end{align}
Since $\psi_{tt}(\cdot,t)=-(-\Delta)_D^s\psi(\cdot,t)+\delta(-\Delta)_D^s\psi_t(\cdot,t)$, it follows from \eqref{MW1} and \eqref{MW2} that
\begin{align*}
\|\psi_{tt}(\cdot,t)\|_{W^{-s,2}(\bOm)}^2\le C\Big(\|\psi_0\|_{W_0^{s,2}(\bOm)}^2+\|\psi_1\|_{L^2(\Omega)}^2\Big),
\end{align*}
 and we have also shown \eqref{Dual-EST-1-2}.  We can also easily deduce that  $\psi_{tt}\in C([0,T);W^{-s,2}(\bOm))$.\\
 
{\bf Step 4}: We claim that $\psi\in C([0,T); D((-\Delta)_D^s))\cap L^\infty((0,T);L^2(\Omega))$. It follows from the estimate \eqref{Dual-EST-1} that $\psi\in L^\infty((0,T);L^2(\Omega))$. Proceeding as above we get that
\begin{align}\label{M-W}
\|\psi(\cdot,t)\|_{D((-\Delta)_D^s)}^2=&\|(-\Delta)_D^s\psi(\cdot,t)\|_{L^2(\Omega)}^2\notag\\
\le &2\sum_{n=1}^\infty\left(\Big|\lambda_n^{\frac 12}C_n(T-t)\lambda_n^{\frac 12}\psi_{0,n}\Big|^2+\Big|\lambda_nD_n(T-t)\psi_{1,n}\Big|^2\right).
\end{align} 
It follows from \eqref{M-W}, \eqref{ES-C} and \eqref{ES-D} that
\begin{align*}
\|\psi(\cdot,t)\|_{D((-\Delta)_D^s)}^2\le C\Big(\|\psi_0\|_{W_0^{s,2}(\bOm)}^2+\|\psi_1\|_{L^2(\Omega)}^2\Big),
\end{align*}
and we can also deduce that $\psi\in C([0,T); D((-\Delta)_D^s))$. We have shown the claim.\\

{\bf Step 5}: It is easy to see that the mapping $[0,T)\ni t\to \psi(\cdot,t)\in L^2(\Omc)$ can be analytically extended to $\Sigma_T$. We also recall that for every $t\in [0,T)$ fixed, we have that $\psi(\cdot,t)\in D((-\Delta)_D^s)\subset W^{s,2}(\RR^N)$. Therefore, $\mathcal N_sv(\cdot,t)$ exists and belongs to $L^2(\Omc)$.

We claim that 

\begin{align}\label{28}
\mathcal{N}_{s}\psi(x,t)=\sum_{n=1}^{\infty}\Big(C_{n}(T-t)\psi_{0,n}+D_{n}(T-t)\psi_{1,n}\Big)\mathcal{N}_{s}\varphi_{n}(x),
\end{align}
and the series is convergent in $L^2(\R^{N}\setminus\Omega)$ for every $t\in[0,T)$. Indeed, let $\xi>0$ be fixed but arbitrary and let $t\in[0,T-\xi]$. It is sufficient to prove that

\begin{align*} 
\left\|\sum_{n=N_0+1}^{\infty}\Big[C_{n}(T-t)\psi_{0,n}+D_{n}(T-t)\psi_{1,n}\Big]\mathcal{N}_{s}\varphi_{n}\right\|_{L^2(\R^{N}\setminus\Omega)}\longrightarrow 0\text{ as }N_0\to\infty.
\end{align*}
Since $\mathcal{N}_{s}:W^{s,2}(\R^{N})\to L^2(\R^{N}\setminus\Omega)$ is bounded, then using \eqref{ES-C} and \eqref{ES-D}, we get that there is a constant $C>0$ such that
\begin{align*}
&\left\|\sum_{n=N_0+1}^{\infty}\Big(C_{n}(T-t)\psi_{0,n}+D_{n}(T-t)\psi_{1,n}\Big)\mathcal{N}_{s}\varphi_{n}\right\|_{L^2(\R^{N}\setminus\Omega)}^2\\
\le &C\left\|\sum_{n=N_0+1}^{\infty}\Big(C_{n}(T-t)\psi_{0,n}+D_{n}(T-t)\psi_{1,n}\Big)\varphi_{n}\right\|_{W_0^{s,2}(\bOm)}^2\\
\leq &C\left(\sum_{n=N_0+1}^{\infty}|\psi_{0,n}|^2+\sum_{n=N_0+1}^{\infty}|\psi_{1,n}|^2\right)\longrightarrow 0\text{ as }N_0\to \infty.
\end{align*}
Thus, $\mathcal{N}_{s}$ is given by \eqref{28} and the series is convergent in $L^2(\R^{N}\setminus\Omega)$ uniformly on any compact subset of $[0,T)$.

Besides, we obtain the following continuous dependence on the data. Let $m\in\N$ be such that $m>N_0$ and consider
\begin{align}\label{27.1}
\psi_{m}(x,t)=\sum_{k=1}^{m}\Big(C_{n}(T-t)\psi_{0,n}+D_{n}(T-t)\psi_{1,n}\Big)\mathcal{N}_{s}\varphi_{n}(x).
\end{align}
Using the fact that the operator $\mathcal{N}_{s}:  W^{s,2}_0(\bOm)\longrightarrow L^2(\R^{N}\setminus\Omega)$ is bounded,  the continuous embedding $W_0^{s,2}(\bOm)\hookrightarrow L^{2}(\Omega)$, \eqref{ES-C} and \eqref{ES-D}, we get that there is a constant $C>0$ such that for every $t\in [0,T]$,

\begin{align}
&\left\|\sum_{k=1}^{m} C_{k}(T-t)\psi_{0,k}\mathcal{N}_{s}\varphi_{k}\right\|_{L^2(\R^{N}\setminus\Omega)}^2\nonumber\\ 
&\leq 2\left\|\sum_{k=1}^{N_0}C_{k}(T-t)\psi_{0,k}\mathcal{N}_{s}\varphi_{k}\right\|_{L^2(\R^{N}\setminus\Omega)}^2+2\left\|\sum_{k=N_0+1}^{m}C_{k}(T-t)\psi_{0,k}\mathcal{N}_{s}\varphi_{k}\right\|_{L^2(\R^{N}\setminus\Omega)}^2\nonumber\\
&\leq C\left\|\sum_{k=1}^{N_0}C_{k}(T-t)\psi_{0,k}\varphi_{k}\right\|_{W_0^{s,2}(\bOm)}^2+C\left\|\sum_{k=N_0+1}^{m}C_{k}(T-t)\psi_{0,k}\varphi_{k}\right\|_{W_0^{s,2}(\bOm)}^2\nonumber\\
&\leq C \left(\sum_{k=1}^{N_0}|C_{k}(T-t)\lambda_k^{\frac 12}\psi_{0,k}|^2+\sum_{k=N_0+1}^{m}|C_{k}(T-t)\lambda_k^{\frac 12}\psi_{0,k}|^2\right)
\leq C \|\psi_0\|_{W^{s,2}(\bOm)}^2.\label{27.2}
\end{align} 

Analogously, we obtain that there is a constant $C>0$ such that for every $t\in [0,T]$,

\begin{align}\label{27.3}
\left\|\sum_{k=1}^{m} D_{k}(T-t)\psi_{1,k}\mathcal{N}_{s}\varphi_{k}\right\|_{L^2(\R^{N}\setminus\Omega)}^2
\leq C\|\psi_1\|_{L^2(\Omega)}^2.
\end{align}

It follows from \eqref{27.2} and \eqref{27.3} that

\begin{align}\label{27.4}
\|\mathcal{N}_{s}\psi(\cdot,t)\|_{L^2(\R^{N}\setminus\Omega)}^2\leq C\Big(\|\psi_0\|_{W^{s,2}(\bOm)}^2+\|\psi_1\|_{L^2(\Omega)}^2\Big).
\end{align}

Next, since the functions $C_n(z)$ and $D_n(z)$ are entire functions, it follows that the function

\begin{align*}
\sum_{n=1}^{m}\Big[C_{n}(T-z)\psi_{0,n}+D_{n}(T-z)\psi_{1,n}\Big]\mathcal{N}_{s}\varphi_{n}
\end{align*}
is analytic in $\Sigma_T$. 

Let $\tau>0$ be fixed but otherwise arbitrary. Let $z\in\CC$ satisfy $\mbox{Re}(z)\le T-\tau$. Then proceeding as above by using \eqref{ES-C} and \eqref{ES-D}, we get that

 \begin{align*}
& \left\Vert\sum_{n=m+1}^\infty \psi_{0,n}C_n(T-z)\mathcal N_s \varphi_n\right\Vert_{L^2(\Omc)}^2
+\left\Vert\sum_{n=m+1}^\infty \psi_{1,n}D_n(T-z)\mathcal N_s \varphi_n\right\Vert_{L^2(\Omc)}^2\\
 \le &C\sum_{n=m+1}^\infty \left|\lambda_n^{\frac 12}\psi_{0,n}\right|^2 +C\sum_{n=m+1}^\infty |\psi_{1,n}|^2\longrightarrow 0\;\mbox{ as }\; m\to\infty.
 \end{align*}
 We have shown that
 
 \begin{align}\label{form-nor}
\mathcal N_s \psi(\cdot,z)=&\sum_{n=1}^\infty \psi_{0,n}C_n(T-z)\mathcal N_s \varphi_n+\sum_{n=1}^\infty \psi_{1,n}D_n(T-z)\mathcal N_s \varphi_n,
 \end{align}
and the series is uniformly convergent in any compact subset of $\Sigma_T$. Thus, $\mathcal N_s \psi$ given in \eqref{form-nor} is also analytic in $\Sigma_T$.  The proof is finished.
\end{proof}

\section{Proof of the main results}\label{prof-ma-re}

In this section we prove the main results stated in Section \ref{sec-2}. 

\subsection{The lack of exact or null controllability}
We start with the proof of the lack of null/exact controllability of the system \eqref{SD-WE}. For this purpose, we will use the following concept of controllability.

\begin{definition}\label{esp-con}
The system \eqref{SD-WE} is said to be \emph{spectrally controllable} if any finite linear combination of eigenvectors, that is,
\begin{align*}
u_0=\sum_{n=1}^{M}u_{0,n}\varphi_{n}, \quad u_1=\sum_{n=1}^{M}u_{1,n}\varphi_{n},\;\; M\ge 1\;\mbox{ arbitrary},
\end{align*}
can be steered to zero by a control function $g$.
\end{definition}

Let $(u,u_t)$ and $(\psi,\psi_t)$ be the weak solutions of \eqref{SD-WE} and \eqref{ACP-Dual}, respectively. Multiplying the first equation in \eqref{SD-WE} by $\psi$, then integrating by parts over $(0,T)$ and over $\Om$ and using the integration by parts formulas \eqref{Int-Part}-\eqref{Int-Part-2}, we get 

\begin{align}\label{32}
\int_\Omega \Big(-u_{t}\psi+u\psi_{t}-\delta u(-\Delta)^{s}\psi\Big)\Big|_{t=0}^{t=T}dx=\int_0^{T}\int_{\Omc}\Big(g(x,t)+\delta g_t(x,t)\Big)\mathcal{N}_{s}\psi(x,t)dxdt.
\end{align} 

Using the identity \eqref{32} and a density argument to pass to the limit, we obtain the following criterion of null and exact controllabilities.

\begin{lemma}\label{L1}
The following assertions hold.
\begin{enumerate}
\item  The system \eqref{SD-WE} is null controllable if and only if for each initial condition $(u_0,u_1)\in W_0^{s,2}(\bOm)\times L^2(\Omega) $, there exists a control function $g$ such that the solution $(\psi,\psi_t)$ of the dual system \eqref{ACP-Dual} satisfies

\begin{align}\label{33}
(u_1,\psi(\cdot,0))_{L^2(\Om)}&-\langle u_0,\psi_{t}(\cdot,0)\rangle_{\frac 12,-\frac 12}+\langle u_0,\delta(-\Delta)^{s}\psi(\cdot,0)\rangle_{\frac 12,-\frac 12}\notag\\
=&\int_0^{T}\int_{\Omc}\Big(g(x,t)+\delta g_t(x,t)\Big)\mathcal{N}_{s}\psi(x,t)dxdt,
\end{align}
for each $(\psi_0,\psi_1)\in L^2(\Omega)\times W^{-s,2}(\bOm)$.

\item The system \eqref{SD-WE} is exact controllable at time $T>0$, if and only if there exists a control function $g$ such that the solution $(\psi,\psi_t)$ of \eqref{ACP-Dual} satisfies

\begin{align}\label{33-2}
-(u_t(\cdot,T),\psi_0)_{L^2(\Om)}&+\langle u(\cdot,T),\psi_1\rangle_{\frac 12,-\frac 12}-\langle u(\cdot,T),\delta(-\Delta)^{s}\psi_0\rangle_{\frac 12,-\frac 12}\notag\\
=&\int_0^{T}\int_{\Omc}\Big(g(x,t)+\delta g_t(x,t)\Big)\mathcal{N}_{s}\psi(x,t)dxdt,
\end{align}
for each $(\psi_0,\psi_1)\in L^2(\Omega)\times W^{-s,2}(\bOm)$.
\end{enumerate}
\end{lemma}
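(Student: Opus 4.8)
\emph{Strategy.} The transposition identity \eqref{32} records what happens when the state of \eqref{SD-WE} driven by $g$ is paired with a solution $\psi$ of the adjoint system \eqref{ACP-Dual}, so the plan is to (i) make \eqref{32} rigorous for the relevant data, (ii) insert the reachability requirement --- $u(\cdot,T)=u_t(\cdot,T)=0$ for null controllability, and $u_0=u_1=0$ (by linearity) for exact controllability --- which will collapse \eqref{32} to \eqref{33}, respectively \eqref{33-2}, and (iii) recover the converse implications from the fact that the adjoint data $(\psi_0,\psi_1)$ sweep out a total family.

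\emph{Justification of \eqref{32}.} I would first take $g\in\mathcal D(\mathcal O\times(0,T))$, so that by Theorem \ref{theo-44} and Proposition \ref{pro-sol-SG} the solution $u=v+w$ of \eqref{SD-WE} is smooth in $t$ with values in $W^{s,2}(\RR^N)$, satisfies $(-\Delta)^su(\cdot,t)\in L^2(\Omega)$, and --- since $g(\cdot,T)=g_t(\cdot,T)=0$ --- has $u(\cdot,T)\in W_0^{s,2}(\bOm)$ and $u_t(\cdot,T)\in L^2(\Omega)$; and $(\psi_0,\psi_1)$ a finite linear combination of the eigenfunctions $\varphi_n$, so that the solution $\psi$ of \eqref{ACP-Dual} from Theorem \ref{theo-48} is smooth in $t$ with values in $D((-\Delta)_D^s)$. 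For such data I would multiply the first equation of \eqref{SD-WE} by $\psi$, integrate over $\Omega\times(0,T)$, use the nonlocal Green formulas \eqref{Int-Part}--\eqref{Int-Part-2} in space (legitimate because $\psi=0$ in $\Omc$ and $(-\Delta)^su(\cdot,t)\in L^2(\Omega)$), and integrate by parts in time with $\psi_{tt}$ eliminated through the adjoint equation; this gives \eqref{32} with every term classical, the exterior integral $\int_0^T\int_{\Omc}(g+\delta g_t)\mathcal N_s\psi\,dx\,dt$ being the boundary contribution of the spatial Green formula since $u=g$ in $\Omc$. To reach the stated generality I would then invoke the a priori bounds of Theorem \ref{theo-48} --- estimate \eqref{Dual-EST-1}, part (b), and the uniform $L^2(\Omc)$-bound \eqref{27.4} for $\mathcal N_s\psi$ together with the boundedness of $\mathcal N_s$ on $W^{s,2}(\RR^N)$ --- to check that every term of \eqref{32} depends continuously on $(\psi_0,\psi_1)$ in the topology of $L^2(\Omega)\times W^{-s,2}(\bOm)$ (the low-regularity terms being read through the pairing $\langle\cdot,\cdot\rangle_{-\frac12,\frac12}$), whence a density argument extends \eqref{32} to all such $(\psi_0,\psi_1)$.

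\emph{The equivalences.} For part (a): if $g$ steers $(u_0,u_1)$ to $(0,0)$, the $t=T$ terms of \eqref{32} disappear, and since $\psi(\cdot,T)=\psi_0$ and $\psi_t(\cdot,T)=-\psi_1$ the surviving $t=0$ terms are exactly the left-hand side of \eqref{33}, valid for every $(\psi_0,\psi_1)$. Conversely, if $g$ makes \eqref{33} hold for all $(\psi_0,\psi_1)$, I would subtract \eqref{33} from \eqref{32}: the $t=0$ part cancels and one is left with an identity whose left-hand side is a fixed linear combination of $(u_t(\cdot,T),\psi_0)_{L^2(\Om)}$, $\langle u(\cdot,T),\psi_1\rangle_{\frac12,-\frac12}$ and $\langle u(\cdot,T),(-\Delta)^s\psi_0\rangle_{\frac12,-\frac12}$ equal to $0$ for all $(\psi_0,\psi_1)$; taking $\psi_0=0$ and letting $\psi_1$ run over the total family $\{\varphi_n\}_{n\in\NN}$ forces $u(\cdot,T)=0$, and then $\psi_1=0$ with $\psi_0$ arbitrary forces $u_t(\cdot,T)=0$, so $g$ is a null control; as $(u_0,u_1)$ was arbitrary, \eqref{SD-WE} is null controllable. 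Part (b) is the same computation after reducing to $u_0=u_1=0$ by linearity: the $t=0$ terms of \eqref{32} are then absent, so for a prescribed target $(u(\cdot,T),u_t(\cdot,T))\in L^2(\Omega)\times W^{-s,2}(\bOm)$ the validity of \eqref{33-2} for all $(\psi_0,\psi_1)$ is, by the same totality argument, equivalent to the solution produced by $g$ reaching exactly this target, and exact controllability is the assertion that this can be achieved for every target.

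\emph{Main obstacle.} The delicate point will be the density step: I must ensure that the exterior term $\int_0^T\int_{\Omc}(g+\delta g_t)\mathcal N_s\psi\,dx\,dt$ and the terminal pairings stay well defined and continuous once $(\psi_0,\psi_1)$ is only in $L^2(\Omega)\times W^{-s,2}(\bOm)$ --- that is, once $\psi$ has limited regularity and $\mathcal N_s\psi(\cdot,t)$ is available only as an $L^2(\Omc)$-convergent series --- and here the uniform bounds and the analyticity of $t\mapsto\mathcal N_s\psi(\cdot,t)$ established in Theorem \ref{theo-48}(3) are exactly what makes the limit pass. The rest is bookkeeping of the signs generated by the time integrations by parts and by the convention $\psi_t(\cdot,T)=-\psi_1$ in \eqref{ACP-Dual}.
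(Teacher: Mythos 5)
Your argument is correct and is essentially the paper's own: the paper obtains the transposition identity \eqref{32} by multiplying \eqref{SD-WE} by $\psi$ and applying \eqref{Int-Part}--\eqref{Int-Part-2}, and then deduces both equivalences from \eqref{32} together with a density argument, exactly the structure you follow (your write-up is in fact more detailed than the paper's one-line justification). The only caveat is that the continuity in $(\psi_0,\psi_1)$ supporting the density step is what \eqref{Dual-EST-1} and \eqref{27.4} actually give, namely continuity in the $W_0^{s,2}(\bOm)\times L^2(\Omega)$ topology rather than in $L^2(\Omega)\times W^{-s,2}(\bOm)$ as you state; this imprecision is already present in the paper's formulation of the lemma and is harmless for how the lemma is later used (only for very regular adjoint data).
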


Now, we are able to give the proof of the first main result of this work.

\begin{proof}[{\bf Proof of Theorem \ref{lact-nul-cont}}]
Firstly, since $\varphi_n\in W_0^{s,2}(\bOm)\subset L^2(\Om)\subset W^{-s,2}(\bOm)$, it suffices to prove that the system is not spectrally controllable.

Secondly, using Definition \ref{esp-con}, we show that no non-trivial finite linear combination of eigenvectors can be driven to zero in finite time. To do this,  we write the solution of \eqref{ACP-Dual} in a better way. With a simple calculation, it is easy to see that

\begin{align}\label{34}
\psi(x,t)=&\sum_{n=1}^{N_0}\left(\widetilde{a}_{n}e^{\widetilde{\lambda}_{n}^+(T-t)}+\widetilde{b}_{n}e^{\widetilde{\lambda}_{n}^-(T-t)}\right)\varphi_{n}(x)\notag\\
&+\sum_{n=N_0+1}^{\infty}\left(a_{n}e^{\lambda_n^+(T-t)}+b_{n}e^{\lambda_n^-(T-t)}\right)\varphi_{n}(x),
\end{align}
where 
\begin{align}\label{35}
\begin{cases}
\displaystyle \widetilde{a}_{n}=\frac{1}{2}\left((1-\frac{\alpha_{n}}{\beta_{n}}i)\psi_{0,n}-\frac{i}{\beta_{n}}\psi_{1,n}\right),\\
\displaystyle\widetilde{b}_{n}=\frac{1}{2}\left((1+\frac{\alpha_n}{\beta_n}i)\psi_{0,n}+\frac{i}{\beta_n}\psi_{1,n}\right),
\end{cases}
\end{align}
and
\begin{align}\label{36}
\begin{cases}
\displaystyle a_{n}=\frac{\lambda_n^-}{\lambda_n^--\lambda_n^+}\psi_{0,n}+\frac{1}{\lambda_n^--\lambda_n^+}\psi_{1,n},\\
\displaystyle b_{n}=\frac{-\lambda_n^+}{\lambda_n^--\lambda_n^+}\psi_{0,n}-\frac{1}{\lambda_n^--\lambda_n^+}\psi_{1,n}.
\end{cases}
\end{align}

Now, we write the initial data in Fourier series
\begin{align}\label{37}
u_0=\sum_{n=1}^{\infty}u_{0,n}\varphi_{n}, \quad u_1=\sum_{n=1}^{\infty}u_{1,n}\varphi_{n},
\end{align}
and suppose that there exists $M\in\N$ such that  
\begin{align}\label{37.1}
u_{0,n}=u_{1,n}=0, \quad \forall\;  n\geq M.
\end{align}

Assume that \eqref{SD-WE} is spectrally controllable. Then, there exists a control function $g$ such that the solution $(u,u_t)$ of \eqref{SD-WE} with $u_0,u_1$ given by \eqref{37}--\eqref{37.1} satisfies $u(\cdot,T)=u_{t}(\cdot,T)=0$ in $\Omega$. From Lemma \ref{L1} we have
\begin{align}\label{38}
(u_1,\psi(\cdot,0))_{L^2(\Om)}&-\langle u_0,\psi_{t}(\cdot,0)\rangle_{\frac 12,-\frac 12}+\langle u_0,\delta(-\Delta)^{s}\psi(\cdot,0)\rangle_{\frac 12,-\frac 12}\notag\\
=&\int_0^{T}\int_{\Omc}\Big(g(x,t)+\delta g_t(x,t)\Big)\mathcal{N}_{s}\psi(x,t)dxdt,
\end{align}
for any solution $(\psi,\psi_t)$ of the dual system \eqref{ACP-Dual}.

We divide the proof in the following two cases.
\begin{enumerate}
\item[{\bf Case 1.}] $M>N_0$. 

We consider the following trajectories: 
\begin{align}
\psi(x,t)=e^{\widetilde{\lambda}_{n}^+(T-t)}\varphi_{n}(x),\quad \psi(x,t)=e^{\widetilde{\lambda}_{n}^-(T-t)}\varphi_{n}(x)\text{ for } n\leq N_0\label{38.1}\\
\psi(x,t)=e^{\lambda_n^+(T-t)}\varphi_{n}(x),\quad\psi(x,t)=e^{\lambda_n^-(T-t)}\varphi_{n}(x)\text{ for }N_0<n<M\label{38.2}.
\end{align}
Replacing \eqref{38.1} in \eqref{38} we obtain, for any $n\in[0,N_0]$, the following system:
\begin{align}
u_{1,n}+ u_{0,n}\widetilde{\lambda}_{n}^++ \delta u_{0,n}\lambda_{n}=\int_0^{T}\int_{\Omc}(g(x,t)+\delta g_t(x,t))e^{-\widetilde{\lambda}_{n}^+ t}\mathcal{N}_{s}\varphi_{n}(x)dxdt,\label{39}\\
u_{1,n}+ u_{0,n}\widetilde{\lambda}_{n}^- +\delta u_{0,n}\lambda_{n}=\int_0^{T}\int_{\Omc}(g(x,t)+\delta g_t(x,t))e^{-\widetilde{\lambda}_{n}^- t}\mathcal{N}_{s}\varphi_{n}(x)dxdt,\label{40}
\end{align}
and replacing \eqref{38.2} in \eqref{38}, it follows that for any $N_0<n<M$,
\begin{align}
u_{1,n}+ u_{0,n}\lambda_n^+ + \delta u_{0,n}\lambda_{n}=\int_0^{T}\int_{\Omc}(g(x,t)+\delta g_t(x,t))e^{-\lambda_n^+ t}\mathcal{N}_{s}\varphi_{n}(x)dxdt,\label{41}\\
u_{1,n}+ u_{0,n}\lambda_n^- + \delta u_{0,n}\lambda_{n}=\int_0^{T}\int_{\Omc}(g(x,t)+\delta g_t(x,t))e^{-\lambda_n^- t}\mathcal{N}_{s}\varphi_{n}(x)dxdt.\label{42}
\end{align}

Next, define the complex function
\begin{align}\label{43}
F(z)=\int_0^{T}\left(\int_{\Omc}(g(x,t)+\delta g_t(x,t))\mathcal{N}_{s}\varphi_{n}(x)dx\right)e^{izt}dt.
\end{align}
According to Paley--Wiener theorem, $F$ is an entire function. Due to \eqref{37.1}, from \eqref{41} and \eqref{42} we obtain that $F$ satisfies $F(i\lambda_n^+)=F(i\lambda_n^-)=0$, for all $n>M$. Besides, we know that $\lambda_n^+\to -\delta$ as $n\to\infty$ (see Remark \ref{rem-41}). Then, $F$ is zero in a set with finite accumulation point. This implies that $F\equiv 0$. In particular, $F(i\widetilde{\lambda}_{n}^+)=F(i\widetilde{\lambda}_{n}^-)=0$. From \eqref{39} to \eqref{42}, it is easy to see that $u_{0,n}=u_{1,n}=0$ for $n\leq N_0$ and $u_{0,n}=u_{1,n}=0$ for $N_0<n<M$. Thus the trivial state is the only one which can be steered to zero.

\item[{\bf Case 2.}] $M=N_0$ or $M<N_0$.

In these cases the identities \eqref{41} and \eqref{42} do not appear. Proceeding as above we get the desired result.
\end{enumerate}
We have shown that the system \eqref{SD-WE} is not spectrally controllable. It is clear from the proof that \eqref{SD-WE} is neither exact nor null controllable. The proof is finished.
\end{proof}

\begin{remark}
{\em 
We can observe that in the case $\delta=0$, the previous conclusion is not valid. This is due to the fact that,  when $\delta=0$, the previous computation gives the following system for $n\le M$:
\begin{align*}
u_{1,n}+ u_{0,n}\widetilde{\lambda}_{n}^+ =\int_0^{T}\int_{\Omc}g(x,t)e^{-\widetilde{\lambda}_{n}^+ t}\mathcal{N}_{s}\varphi_{n}(x)dxdt,\\
u_{1,n}+ u_{0,n}\widetilde{\lambda}_{n}^- =\int_0^{T}\int_{\Omc}g(x,t)e^{-\widetilde{\lambda}_{n}^- t}\mathcal{N}_{s}\varphi_{n}(x)dxdt.
\end{align*}
Since $\widetilde{\lambda}_{n}^{\pm}\to \pm i\infty$, as $n\to\infty$, it follows that the set $\{\widetilde{\lambda}_{n}^{\pm}\}_{n\in\N}$, on which
the function $F$ defined in \eqref{43} is zero, does not have a finite accumulation point. Thus we cannot conclude that $u_{0,n}=u_{1,n}=0$ for all $n\le M$. This shows that the analysis of the null/exact controllability of the pure wave equation (without damping) with the fractional Laplacian must be done by using other techniques as in the classical case $s=1$.}
\end{remark}

We conclude this subsection with the following observation.

\begin{remark}
{\em We mention the following situations.
\begin{enumerate}
\item Firstly, we notice that if $s$ is close to $1$, using the results obtained in \cite{Umb3}, we can deduce that the eigenfuctions $\varphi_n\in W_0^{1,2}(\Om)$, for every $n\in\NN$, and the net $\{\varphi_n\}=\{\varphi_{n,s}\}_{0<s<1}$ converges as $s\uparrow 1^-$ to the eigenfunctions of the Laplace operator with the zero Dirichlet boundary condition.
This implies that if $(\psi_0,\psi_1)\in W_0^{1,2}(\Om)\times L^2(\Om)\hookrightarrow W_0^{s,2}(\bOm)\times L^2(\Omega)$ and $s$ is close to $1$, then the solution $(\psi,\psi_t)$ of the dual system has the following regularity: $\psi\in C([0,T]; W_0^{1,2}(\Omega))\cap C^1([0,T];L^2(\Omega)$ and $\psi_{tt}\in C([0,T);(W_0^{1,2}(\Om))^\star))$. Therefore, if the control function $g$ has enough regularity as in Lemma \ref{lem-37} and $(u_0,u_1)\in W_0^{1,2}(\Om)\times L^2(\Om)$, then in Lemma \ref{L1}, using \eqref{lim1} and \eqref{lim2}, and taking the limit of \eqref{33} and \eqref{33-2} as $ s \uparrow 1^-$, we can deduce that

\begin{align*}
(u_1,\psi(\cdot,0))_{L^2(\Om)}&-\langle u_0,\psi_{t}(\cdot,0)\rangle_{1,-1}-\langle u_0,\delta\Delta\psi(\cdot,0)\rangle_{1,-1}\notag\\
=&\int_0^{T}\int_{\pOm}\Big(g(x,t)+\delta g_t(x,t)\Big)\frac{\partial\psi(x,t)}{\partial\nu}\;d\sigma dt,
\end{align*}
for every $(\psi_0,\psi_1)\in L^2(\Omega)\times (W_0^{1,2}(\Om))^\star$,
and
\begin{align*}
-(u_t(\cdot,T),\psi_0)_{L^2(\Om)}&+\langle u(\cdot,T),\psi_1\rangle_{1,-1}-\langle u(\cdot,T),\delta\Delta\psi_0\rangle_{1,-1}\notag\\
=&\int_0^{T}\int_{\pOm}\Big(g(x,t)+\delta g_t(x,t)\Big)\frac{\partial\psi(x,t)}{\partial\nu}\;d\sigma dt,
\end{align*}
respectively. These are the notions of null and exact controllabilities, respectively, of the following (possible) strong damping local wave equation:

\begin{equation}\label{S-Del}
\begin{cases}
u_{tt} -\Delta u -\delta\Delta u_t=0\;\;&\mbox{ in }\;\Omega\times (0,T),\\
u=g\chi_{\omega\times (0,T)}&\mbox{ on }\;\pOm\times (0,T);\\
u(\cdot,0)=u_0,\;u_t(\cdot,0)=u_1&\mbox{ in }\;\Omega,
\end{cases}
\end{equation}
studied by several authors (see e.g. \cite{rosier2007,Zua1} and the references therein). Here, $\langle\cdot,\cdot\rangle_{1,-1}$ denotes the duality pair between $W_0^{1,2}(\Om)$ and $(W_0^{1,2}(\Om))^\star$.

\item In the above sense, the results obtained in the present paper for the fractional case $0<s<1$ are consistent with the ones obtained for the case of the Laplace operator in one space dimension $N=1$ in \cite{rosier2007}. For this reason, following the techniques we developed in the present article, we anticipate that the approximate controllability or the lack of exact/null controllability of the system \eqref{S-Del} proved in \cite{rosier2007} for one space dimension, that is, $N=1$, is also valid for any dimension $N\ge 1$.
\end{enumerate}
}
\end{remark}

\subsection{The unique continuation property}

Here we show that the dual system satisfies the unique continuation property.

\begin{proof}[{\bf Proof of Theorem \ref{pro-uni-con}}]
Let $\mathcal O\subset\Omc$ be an arbitrary non-empty open set. Suppose that $\mathcal{N}_{s}\psi=0$ in $\mathcal{O}\times(0,T)$. Then,
\begin{align}\label{44}
\mathcal{N}_{s}\psi(x,t)=\sum_{n=1}^{\infty}\Big(C_{n}(T-t)\psi_{0,n}+D_{n}(T-t)\psi_{1,n}\Big)\mathcal{N}_{s}\varphi_{n}(x)=0,\quad \forall\; (x,t)\in\mathcal{O}\times(0,T).
\end{align}

Since $\mathcal{N}_{s}\psi$ can be analytically extended to $\Sigma_{T}$ (by Theorem \ref{theo-48}(c)), it follows from \eqref{44} that
\begin{align}\label{45}
\mathcal{N}_{s}\psi(x,t)=\sum_{n=1}^{\infty}\Big(C_{n}(T-t)\psi_{0,n}+D_{n}(T-t)\psi_{1,n}\Big)\mathcal{N}_{s}\varphi_{n}(x)=0,\quad  (x,t)\in\mathcal{O}\times(-\infty,T).
\end{align}

Let $\{\lambda_{k}\}_{k\in\N}$ be the set of all eigenvalues of the operator $(-\Delta)_D^{s}$ and let $\{\varphi_{k_{j}}\}_{1\leq j\leq m_{k}}$ be an orthonormal basis for ker$(\lambda_{k}-(-\Delta)_D^{s})$. Then, \eqref{45} can be rewritten as 

\begin{align}\label{46}
\mathcal{N}_{s}\psi(x,t)=&\sum_{k=1}^{\infty}\left(\sum_{j=1}^{m_{k}}\psi_{0,k_{j}}\mathcal{N}_{s}\varphi_{k_{j}}(x)\right)C_{k}(T-t)\notag\\
&+\sum_{k=1}^{\infty}\left(\sum_{j=1}^{m_{k}}\psi_{1,k_{j}}\mathcal{N}_{s}\varphi_{k_{j}}(x)\right)D_{k}(T-t)=0,\quad \forall\; (x,t)\in\mathcal{O}\times(-\infty,T).
\end{align}

Let $z\in\C$ with Re$(z)=\eta>0$ and let $m\in\N$. Since $\varphi_{k_{j}}$, $1\leq j\leq m_k$, are orthonormal, then using the fact that the operator $\mathcal{N}_{s}:D((-\Delta)_D^{s})\subset W^{s,2}(\R^{N})\to L^2(\R^{N}\setminus\Omega)$ is bounded, the continuous dependence on the data of $\mathcal{N}_{s}$ (see \eqref{27.4}), and letting 

\begin{align*}
\psi_{m}(\cdot,t):=&\sum_{k=1}^{m}\left(\sum_{j=1}^{m_{k}}\psi_{0,k_{j}}\mathcal{N}_{s}\varphi_{k_{j}}(x)\right)e^{z(t-T)}C_{k}(T-t)\notag\\
&+\sum_{k=1}^{m}\left(\sum_{j=1}^{m_{k}}\psi_{1,k_{j}}\mathcal{N}_{s}\varphi_{k_{j}}(x)\right)e^{z(t-T)}D_{k}(T-t),
\end{align*}
we obtain that there is a constant $C>0$ such that for every $t\in [0,T]$,

\begin{align} \label{48}
\|\psi_{m}(\cdot,t)\|_{L^2(\R^{N}\setminus\Omega)}\leq C e^{\eta(t-T)}\Big(\|\psi_0\|_{W^{s,2}(\bOm)}+\|\psi_1\|_{L^2(\Omega)}\Big).
\end{align}
The right hand side of \eqref{48} is integrable over $t\in(-\infty,T)$ and
\begin{align*}
\int_{-\infty}^{T}e^{\eta(t-T)}\Big(\|\psi_0\|_{W^{s,2}(\bOm)}+\|\psi_1\|_{L^2(\Omega)}\Big)\;dt=\frac{1}{\eta}\left(\|\psi_0\|_{W^{s,2}(\bOm)}+\|\psi_1\|_{L^2(\Omega)}\right).
\end{align*}

By the Lebesgue dominated convergence theorem, we can deduce that
\begin{align*}
\int_{-\infty}^{T}e^{z(t-T)}&\left[\sum_{k=1}^{\infty}\left(\sum_{j=1}^{m_{k}}\psi_{0,k_{j}}\mathcal{N}_{s}\varphi_{k_{j}}(x)\right)C_{k}(T-t)+\sum_{k=1}^{\infty}\left(\sum_{j=1}^{m_{k}}\psi_{1,k_{j}}\mathcal{N}_{s}\varphi_{k_{j}}(x)\right)D_{k}(T-t)\right]dt\\
&=\sum_{k=1}^{\infty}\sum_{j=1}^{m_{k}}\Big(E_{k}(z)\psi_{0,k_{j}}+F_{k}(z)\psi_{1,k_{k}}\Big)\mathcal{N}_{s}\varphi_{k_{j}}, \quad x\in\R^{N}\setminus\Omega, \ \text{Re}(z)>0,
\end{align*}
where 
\begin{align*}
E_{k}(z)=
\begin{cases}
\displaystyle \frac{1-\frac{\alpha_{k}+\beta_{k}}{2\beta_{k}}}{z-(\alpha_{k}+\beta_{k})}+\frac{\frac{\alpha_{k}+\beta_{k}}{2\beta_k}}{z-(\alpha_{k}-\beta_{k})}& \text{if }k\leq N_0\\
\displaystyle  \frac{1}{\lambda_k^--\lambda_k^+}\left(\frac{\lambda_k^-}{z-\lambda_k^+}-\frac{\lambda_k^+}{z-\lambda_k^-}\right)& \text{if }k>N_0,
\end{cases}
\end{align*}
and
\begin{align*}
F_{k}(z)=
\begin{cases}
\displaystyle\frac{-1}{2\beta_{k}(z-(\alpha_{k}+\beta_{k}))}+\frac{1}{2\beta_{k}(z-(\alpha_{k}-\beta_{k}))}& \text{if }k\leq N_0\\
\displaystyle \frac{1}{\lambda_k^--\lambda_k^+}\left(\frac{1}{z-\lambda_k^+}-\frac{1}{z-\lambda_k^-}\right)& \text{if }k>N_0.
\end{cases}
\end{align*}

From \eqref{46} we get that
\begin{align}\label{50}
\sum_{k=1}^{\infty}\sum_{j=1}^{m_{k}}\Big(E_{k}(z)\psi_{0,k_{j}}+F_{k}(z)\psi_{1,k_{k}}\Big)\mathcal{N}_{s}\varphi_{k_{j}}(x)=0,\quad x\in\mathcal{O}, \ \text{Re}(z)>0.
\end{align}
Using the analytic continuation in $z$, we get that \eqref{50} holds for every $z\in\C\setminus\{\lambda_k^+,\lambda_k^-\}_{k>N_0}$ and also for every $z\in\C\setminus\{\alpha_{k}+\beta_{k}, \alpha_{k}-\beta_{k}\}_{k\le N_0}$.  

For $k\le N_0$, we take a small circle about $\alpha_{k}+\beta_{k}$, but not including $\{\alpha_{l}+\beta_{l}\}_{l\neq k}$. Then, integrating over that circle we get
\begin{align}\label{51}
\sum_{j=1}^{m_{k}}\left[\left(1-\frac{\alpha_{k_j}+\beta_{k_j}}{2\beta_{k_j}}\right)\psi_{0,k_{j}}-\frac{1}{2\beta_{k_j}}\psi_{1,k_j}\right]\mathcal{N}_{s}\varphi_{k_{j}}=0,\quad x\in\mathcal{O}.
\end{align}

Now, integrating over a small circle about $\alpha_{k}-\beta_{k}$ and not including $\{\alpha_{l}-\beta_{l}\}_{l \neq k}$, we obtain
\begin{align}\label{52}
\sum_{j=1}^{m_{k}}\left(\frac{\alpha_{k_j}+\beta_{k_j}}{2\beta_{k_j}}\psi_{0,k_j}+\frac{1}{2\beta_{k_j}}\psi_{1,k_j}\right)\mathcal{N}_{s}\varphi_{k_j}=0,\quad x\in\mathcal{O}.
\end{align}

Let
\begin{align*}
\psi_{k}^1&:=\sum_{j=1}^{m_{k}}\left[\left(1-\frac{\alpha_{k_j}+\beta_{k_j}}{2\beta_{k_j}}\right)\psi_{0,k_{j}}-\frac{1}{2\beta_{k_j}}\psi_{1,k_j}\right]\varphi_{k_{j}},\\ 
\psi_k^2&:=\sum_{j=1}^{m_{k}}\left(\frac{\alpha_{k_j}+\beta_{k_j}}{2\beta_{k_j}}\psi_{0,k_j}+\frac{1}{2\beta_{k_j}}\psi_{1,k_j}\right)\varphi_{k_j}.
\end{align*}
It follows from \eqref{51} and \eqref{52} that $\mathcal{N}_{s}\psi_k^1=\mathcal{N}_{s}\psi_k^2=0$ in $\mathcal{O}$. We have shown that 
\begin{align*}
(-\Delta)^{s}\psi_{k}^l=\lambda_{k}\psi_{k}^l \quad\text{  in }\Omega\quad\text{ and }\quad\mathcal{N}_{s}\psi_{k}^l=0\quad\text{ in }\mathcal{O}, \ l=1,2.
\end{align*}
It follows from Lemma \ref{lem-UCD} that $\psi_k^l=0$, for every $k$, $l=1,2$. Using the fact that $\{\varphi_{k_j}\}_{1\leq j\leq m_k}$ is linearly independent in $L^2(\Omega)$, we get that 
\begin{align*}
\left(\left(1-\frac{\alpha_{k_j}+\beta_{k_j}}{2\beta_{k_j}}\right)\psi_{0,k_j}-\frac{1}{2\beta_{k_j}}\psi_{1,k_j},\varphi_{k_j}\right)=0,\quad 1\leq j\leq m_{k},\\
\left(\frac{\alpha_{k_j}+\beta_{k_j}}{2\beta_{k_j}}\psi_{0,k_j}+\frac{1}{2\beta_{k_j}}\psi_{1,k_j},\varphi_{k_j}\right)=0,\quad 1\leq j\leq m_{k}.
\end{align*}
Therefore, we can deduce that 
\begin{align}\label{53}
\psi_{0,k}=\psi_{1,k}=0, \  k\le N_0.
\end{align}

On the other hand, since the real number $\lambda_k^+$ and $\lambda_k^-$ (see \eqref{convergence}) satisfy
\begin{align*}
\lambda_k^+\sim -\delta \quad\text{ and }\quad \lambda_k^-\sim -\lambda_{k} \quad\text{ as }\quad k\to\infty,
\end{align*}
for $k>N_0$, we can take a suitable small circle about $\lambda_k^+$ and not including $\{\lambda_l^+\}_{l\neq k}$ and also not including $\{\lambda_k^-\}_{k>N_0}$, and integrating \eqref{50} over that circle, we get that
\begin{align}\label{54}
\sum_{j=1}^{m_{k}}\left(\frac{\lambda_{k_j}^-}{\lambda_{k_j}^--\lambda_{k_j}^+}\psi_{0,k_{j}}+\frac{1}{\lambda_{k_j}^--\lambda_{k_j}^+}\psi_{1,k_{j}}\right)\mathcal{N}_{s}\varphi_{k_{j}}=0,\quad x\in\mathcal{O}.
\end{align}

Let us consider 
\begin{align*}
\psi_{k}^{1}:=\sum_{j=1}^{m_{k}}\left(\frac{\lambda_{k_j}^-}{\lambda_{k_j}^--\lambda_{k_j}^+}\psi_{0,k_{j}}+\frac{1}{\lambda_{k_j}^--\lambda_{k_j}^+}\psi_{1,k_{j}}\right)\varphi_{k_{j}}.
\end{align*}
I follows from \eqref{54} that $\mathcal{N}_{s}\psi_{k}^1=0$ in $\mathcal{O}$. Thus, $\psi_{k}^1$ solves the elliptic problem
\begin{align*}
(-\Delta)^{s}\psi_{k}^1=\lambda_{k}\psi_{k}^1 \text{ in }\Omega\text{ and }\mathcal{N}_{s}\psi_{k}^1=0\text{ in }\mathcal{O}.
\end{align*}
From Lemma \ref{lem-UCD} we get that $\psi_{k}^1=0$ in $\Omega$ for every $k$. Since $\{\varphi_{k_{j}}\}_{1\leq j\leq m_{k}}$ is linearly independent in $L^2(\Omega)$, we deduce that
\begin{align*}
\left(\frac{\lambda_{k_j}^-}{\lambda_{k_j}^--\lambda_{k_j}^+}\psi_{0,k_j}+\frac{1}{\lambda_{k_j}^--\lambda_{k_j}^+}\psi_{1,k_j},\varphi_{k_{j}}\right)=0, \quad \forall \;1\leq j\leq m_{k}.
\end{align*}
Thus,
\begin{align}\label{55}
\frac{\lambda_k^-}{\lambda_k^--\lambda_k^+}\psi_{0,k}+\frac{1}{\lambda_k^--\lambda_k^+}\psi_{1,k}=0.
\end{align}

Similarly, taking a circle about $\lambda_k^-$ and not including $\{\lambda_l^-\}_{l\neq k}$ and also not including $\{\lambda_k^+\}_{k>N_0}$, we obtain that
\begin{align*}
\sum_{j=1}^{m_{k}}\left(\frac{-\lambda_{k_j}^+}{\lambda_{k_j}^--\lambda_{k_j}^+}\psi_{0,k_{j}}-\frac{1}{\lambda_{k_j}^--\lambda_{k_j}^+}\psi_{1,k_{j}}\right)\mathcal{N}_{s}\varphi_{k_{j}}=0,\quad x\in\mathcal{O}.
\end{align*}
Let
 \begin{align*}
\psi_{k}^{2}:=\sum_{j=1}^{m_{k}}\left(\frac{-\lambda_{k_j}^+}{\lambda_{k_j}^--\lambda_{k_j}^+}\psi_{0,k_{j}}-\frac{1}{\lambda_{k_j}^--\lambda_{k_j}^+}\psi_{1,k_{j}}\right)\varphi_{k_{j}}.
\end{align*}
It follows from \eqref{54} that $\mathcal{N}_{s}\psi_{k}^2=0$ in $\mathcal{O}$. Hence, $\psi_{k}^2$ solves the elliptic problem
\begin{align*}
(-\Delta)^{s}\psi_{k}^2=\lambda_{k}\psi_{k}^2 \text{ in }\Omega\text{ and }\mathcal{N}_{s}\psi_{k}^2=0\text{ in }\mathcal{O}.
\end{align*}
From Lemma \ref{lem-UCD}, we get that $\psi_{k}^2=0$ in $\Omega$ for every $k$. Therefore,
\begin{align}\label{56}
\frac{-\lambda_k^+}{\lambda_k^--\lambda_k^+}\psi_{0,k}-\frac{1}{\lambda_k^--\lambda_k^+}\psi_{1,k}=0.
\end{align}
Finally, from \eqref{55} and \eqref{56} we get that 
\begin{align}\label{57}
\psi_{0,k}=\psi_{1,k}=0, \ k>N_0.
\end{align}
From \eqref{53} and \eqref{57}, we finally obtain that $\psi_0=\psi_1=0$. Since the solution $(\psi,\psi_t)$ of the adjoint system is unique, we can conclude that $\psi=0$ in $\Omega\times(0,T)$. The proof is finished.
\end{proof}

\subsection{The approximate controllability}

We obtain the result as a direct consequence of the unique continuation property for the adjoint system (Theorem \ref{pro-uni-con}).

\begin{proof}[{\bf Proof of Theorem \ref{main-Theo}}]
Let $g\in \mathcal D(\mathcal O\times (0,T))$,  $(u,u_t)$ the unique weak solution of \eqref{SD-WE} with $u_0=u_1=0$ and let $(\psi,\psi_t)$ be the unique weak solution of \eqref{ACP-Dual} with $(\psi_0,\psi_1)\in W_0^{s,2}(\bOm)\times  L^2(\Omega)$. Firstly, it follows from Theorems \ref{theo-44}  that $u\in C^\infty([0,T];W^{s,2}(\RR^N))$. Thus $u(\cdot,T)\in L^2(\Omega)$ and $u_t(\cdot,T)\in W^{-s,2}(\bOm)$.
Secondly, it follows from Theorem \ref{theo-48} that $\psi\in L^\infty((0,T);L^2(\Omega))$.   
Therefore, using the identity  \eqref{32} we can deduce  that

\begin{align}\label{58}
-\langle u_t(\cdot,T),\psi_0\rangle_{-\frac 12,\frac 12}&+(u(\cdot,T),\psi_1)_{L^2(\Om)}-\langle u(\cdot,T),\delta(-\Delta)^{s}\psi_0\rangle_{\frac 12,-\frac 12}\notag\\
=&\int_0^{T}\int_{\Omc}\Big(g(x,t)+\delta g_t(x,t)\Big)\mathcal{N}_{s}\psi(x,t)dxdt.
\end{align}
If $(\psi_0,\psi_1)\in D((-\Delta)_D^s)\times L^2(\Om)\hookrightarrow W_0^{s,2}(\bOm)\times  L^2(\Omega)$, then \eqref{58} becomes
\begin{align}\label{58-2}
-\langle u_t(\cdot,T),\psi_0\rangle_{-\frac 12,\frac 12}&+\Big(u(\cdot,T),\psi_1-\delta(-\Delta)^s\psi_0\Big)_{L^2(\Om)}\notag\\
=&\int_0^{T}\int_{\Omc}\Big(g(x,t)+\delta g_t(x,t)\Big)\mathcal{N}_{s}\psi(x,t)dxdt.
\end{align}
Since $D((-\Delta)_D^s)\times L^2(\Om)$ is dense $W_0^{s,2}(\bOm)\times  L^2(\Omega)$, to prove that the set $\Big\{(u(\cdot,T),u_t(\cdot,T)):\; g\in \mathcal D(\mathcal O\times (0,T))\Big\}$ is dense in $L^2(\Omega)\times W^{-s,2}(\bOm)$, it suffices to show that if $(\psi_0,\psi_1)\in D((-\Delta)_D^s)\times L^2(\Om)$ is such that
\begin{align}\label{eq42}
-\langle u_t(\cdot,T),\psi_0\rangle_{-\frac 12,\frac 12}+\Big(u(\cdot,T),\psi_1-\delta(-\Delta)^s\psi_0\Big)_{L^2(\Om)}=0,
\end{align}
for any $g\in \mathcal D(\mathcal O\times (0,T))$, then $\psi_0=\psi_1=0$. 

Indeed, let $(\psi_0,\psi_1)\in D((-\Delta)_D^s)\times L^2(\Om)$ satisfy \eqref{eq42}. It follows from \eqref{58-2} and \eqref{eq42} that
\begin{align*}
\int_0^{T}\int_{\Omc}\Big(g(x,t)+\delta g_t(x,t)\Big)\mathcal{N}_{s}\psi(x,t)dxdt=0,
\end{align*}
for any $g\in \mathcal D(\mathcal O\times (0,T))$. By the fundamental lemma of the calculus of variations, we have that
\begin{align*}
\mathcal N_s\psi=0\;\;\mbox{ in }\; \mathcal O\times (0,T).
\end{align*}
It follows from Lemma \ref{lem-UCD} that $\psi=0$ in $\mathcal{O}\times(0,T)$. Since the solution $(\psi,\psi_t)$ of \eqref{ACP-Dual} is unique, we can conclude that $\psi_0=\psi_1=0$ in $\Omega$. The proof is finished.
\end{proof}

\bibliographystyle{plain}
\bibliography{biblio}

\begin{thebibliography}{10}

\bibitem{ABHN}
W.~Arendt, C.~J.~K. Batty, M.~Hieber, and F.~Neubrander.
\newblock {\em Vector-valued {L}aplace transforms and {C}auchy problems},
  volume~96 of {\em Monographs in Mathematics}.
\newblock Birkh\"{a}user/Springer Basel AG, Basel, second edition, 2011.

\bibitem{ATW}
W.~Arendt, A.~F.~M. ter Elst, and M.~Warma.
\newblock Fractional powers of sectorial operators via the
  {D}irichlet-to-{N}eumann operator.
\newblock {\em Comm. Partial Differential Equations}, 43(1):1--24, 2018.

\bibitem{Umb}
U.~Biccari.
\newblock Internal control for non-local {S}chr\"odinger and wave equations
  involving the fractional {L}aplace operator.
\newblock {\em ESAIM: Control Optimization and Calculus of Variations}, 2018,
  to appear.

\bibitem{BiHe}
U.~Biccari and V.~Hern\'andez-Santamaria.
\newblock Controllability of a one-dimensional fractional heat equation:
  theoretical and numerical aspects.
\newblock {\em IMA Journal of Mathematical Control and Information}, 2018, to
  apear.

\bibitem{Umb3}
U.~Biccari and V.~Hern{\'a}ndez-Santamar{\'\i}a.
\newblock The poisson equation from non-local to local.
\newblock {\em Electronic Journal of Differential Equations}, to appear, 2018.

\bibitem{BWZ2}
U.~Biccari, M.~Warma, and E.~Zuazua.
\newblock Addendum: {L}ocal elliptic regularity for the {D}irichlet fractional
  {L}aplacian.
\newblock {\em Adv. Nonlinear Stud.}, 17(4):837--839, 2017.

\bibitem{BWZ1}
U.~Biccari, M.~Warma, and E.~Zuazua.
\newblock Local elliptic regularity for the {D}irichlet fractional {L}aplacian.
\newblock {\em Adv. Nonlinear Stud.}, 17(2):387--409, 2017.

\bibitem{BBC}
K.~Bogdan, K.~Burdzy, and Z-Q. Chen.
\newblock Censored stable processes.
\newblock {\em Probab. Theory Related Fields}, 127(1):89--152, 2003.

\bibitem{BPS}
L.~Brasco, E.~Parini, and M.~Squassina.
\newblock Stability of variational eigenvalues for the fractional
  {$p$}-{L}aplacian.
\newblock {\em Discrete Contin. Dyn. Syst.}, 36(4):1813--1845, 2016.

\bibitem{Caf1}
L.~A. Caffarelli, J-M. Roquejoffre, and Y.~Sire.
\newblock Variational problems for free boundaries for the fractional
  {L}aplacian.
\newblock {\em J. Eur. Math. Soc.}, 12(5):1151--1179, 2010.

\bibitem{NPV}
E.~Di~Nezza, G.~Palatucci, and E.~Valdinoci.
\newblock Hitchhiker's guide to the fractional {S}obolev spaces.
\newblock {\em Bull. Sci. Math.}, 136(5):521--573, 2012.

\bibitem{DRV}
S.~Dipierro, X.~Ros-Oton, and E.~Valdinoci.
\newblock Nonlocal problems with {N}eumann boundary conditions.
\newblock {\em Rev. Mat. Iberoam.}, 33(2):377--416, 2017.

\bibitem{DS}
A.~A. Dubkov, B.~Spagnolo, and V.~V. Uchaikin.
\newblock L\'evy flight superdiffusion: an introduction.
\newblock {\em Internat. J. Bifur. Chaos Appl. Sci. Engrg.}, 18(9):2649--2672,
  2008.

\bibitem{GW}
C.~G. Gal and M.~Warma.
\newblock Bounded solutions for nonlocal boundary value problems on {L}ipschitz
  manifolds with boundary.
\newblock {\em Adv. Nonlinear Stud.}, 16(3):529--550, 2016.

\bibitem{GW-F}
C.~G. Gal and M.~Warma.
\newblock Fractional in time semilinear parabolic equations and applications.
\newblock {\em HAL Id: hal-01578788}, 2017.

\bibitem{GW-CPDE}
C.~G. Gal and M.~Warma.
\newblock Nonlocal transmission problems with fractional diffusion and boundary
  conditions on non-smooth interfaces.
\newblock {\em Comm. Partial Differential Equations}, 42(4):579--625, 2017.

\bibitem{GW2}
C.~G. Gal and M.~Warma.
\newblock On some degenerate non-local parabolic equation associated with the
  fractional {$p$}-{L}aplacian.
\newblock {\em Dyn. Partial Differ. Equ.}, 14(1):47--77, 2017.

\bibitem{GSU}
T.~Ghosh, M.~Salo, and G.~Uhlmann.
\newblock The {C}alder\'on problem for the fractional {S}chr\" odinger
  equation.
\newblock {\em arXiv:1609.09248}.

\bibitem{GR}
R.~Gorenflo, F.~Mainardi, and A.~Vivoli.
\newblock Continuous-time random walk and parametric subordination in
  fractional diffusion.
\newblock {\em Chaos Solitons Fractals}, 34(1):87--103, 2007.

\bibitem{Grub}
G.~Grubb.
\newblock Fractional {L}aplacians on domains, a development of {H}\"ormander's
  theory of {$\mu$}-transmission pseudodifferential operators.
\newblock {\em Adv. Math.}, 268:478--528, 2015.

\bibitem{KW}
V.~Keyantuo and M.~Warma.
\newblock On the interior approximate controllability for fractional wave
  equations.
\newblock {\em Discrete Contin. Dyn. Syst.}, 36(7):3719--3739, 2016.

\bibitem{CLR-MW}
C.~Louis-Rose and M.~Warma.
\newblock Approximate controllability from the exterior of space-time
  fractional wave equations.
\newblock {\em Applied Mathematics \& Optimization}, pages 1--44, 2018.

\bibitem{EZ}
Q.~L\"u and E.~Zuazua.
\newblock On the lack of controllability of fractional in time {ODE} and {PDE}.
\newblock {\em Math. Control Signals Systems}, 28(2):Art. 10, 21, 2016.

\bibitem{Mai}
F.~Mainardi.
\newblock {\em Fractional calculus and waves in linear viscoelasticity}.
\newblock Imperial College Press, London, 2010.
\newblock An introduction to mathematical models.

\bibitem{Man}
B.~B. Mandelbrot and J.~W. Van~Ness.
\newblock Fractional {B}rownian motions, fractional noises and applications.
\newblock {\em SIAM Rev.}, 10:422--437, 1968.

\bibitem{RS2-2}
X.~Ros-Oton and J.~Serra.
\newblock The {D}irichlet problem for the fractional {L}aplacian: regularity up
  to the boundary.
\newblock {\em J. Math. Pures Appl. (9)}, 101(3):275--302, 2014.

\bibitem{RS-DP}
X.~Ros-Oton and J.~Serra.
\newblock The extremal solution for the fractional {L}aplacian.
\newblock {\em Calc. Var. Partial Differential Equations}, 50(3-4):723--750,
  2014.

\bibitem{RS-Po}
X.~Ros-Oton and J.~Serra.
\newblock The {P}ohozaev identity for the fractional {L}aplacian.
\newblock {\em Arch. Ration. Mech. Anal.}, 213(2):587--628, 2014.

\bibitem{rosier2007}
L.~Rosier and P.~Rouchon.
\newblock On the controllability of a wave equation with structural damping.
\newblock {\em Int. J. Tomogr. Stat}, 5(W07):79--84, 2007.

\bibitem{Sch}
W.~R. Schneider.
\newblock Grey noise.
\newblock In {\em Stochastic processes, physics and geometry ({A}scona and
  {L}ocarno, 1988)}, pages 676--681. World Sci. Publ., Teaneck, NJ, 1990.

\bibitem{SV2}
R.~Servadei and E.~Valdinoci.
\newblock On the spectrum of two different fractional operators.
\newblock {\em Proc. Roy. Soc. Edinburgh Sect. A}, 144(4):831--855, 2014.

\bibitem{Val}
E.~Valdinoci.
\newblock From the long jump random walk to the fractional {L}aplacian.
\newblock {\em Bol. Soc. Esp. Mat. Apl. SeMA}, (49):33--44, 2009.

\bibitem{War}
M.~Warma.
\newblock The fractional relative capacity and the fractional {L}aplacian with
  {N}eumann and {R}obin boundary conditions on open sets.
\newblock {\em Potential Anal.}, 42(2):499--547, 2015.

\bibitem{War-In}
M.~Warma.
\newblock The fractional {N}eumann and {R}obin type boundary conditions for the
  regional fractional {$p$}-{L}aplacian.
\newblock {\em NoDEA Nonlinear Differential Equations Appl.}, 23(1):Art. 1, 46,
  2016.

\bibitem{War-AA}
M.~Warma.
\newblock On the approximate controllability from the boundary for fractional
  wave equations.
\newblock {\em Appl. Anal.}, 96(13):2291--2315, 2017.

\bibitem{War-ACE}
M.~Warma.
\newblock Approximate controllabilty from the exterior of space-time fractional
  diffusion equations with the fractional {L}aplacian.
\newblock {\em arXiv:{1802.08028}}, 2018.

\bibitem{ZL}
P.~Zhuang and F.~Liu.
\newblock Implicit difference approximation for the time fractional diffusion
  equation.
\newblock {\em J. Appl. Math. Comput.}, 22(3):87--99, 2006.

\bibitem{Zua1}
E.~Zuazua.
\newblock Controllability of partial differential equations.
\newblock {\em 3\`eme cycle. Castro Urdiales, Espagne}, 2006.

\end{thebibliography}

\end{document}